\numberwithin{itemcounter}{subsection}
\theoremstyle{plain}
\newtheorem*{thmA}{Theorem A}
\newtheorem*{thmB}{Theorem B}
\newtheorem*{Conj}{Conjecture}
\newtheorem{theorem}{Theorem}[section]
\newtheorem{lemma}[theorem]{Lemma}
\newtheorem{lemma-definition}[theorem]{Lemma-Definition}
\newtheorem{definition-lemma}[theorem]{Definition-Lemma}
\newtheorem{proposition}[theorem]{Proposition}
\theoremstyle{definition}
\newtheorem{definition}[theorem]{Definition}
\theoremstyle{remark}
\newtheorem{remark}[theorem]{Remark}
\numberwithin{equation}{section}
\def\bbA{\mathbb{A}}
\def\bbC{\mathbb{C}}
\def\bbG{\mathbb{G}}
\def\bbN{\mathbb{N}}
\def\bbP{\mathbb{P}}
\def\bbQ{\mathbb{Q}}
\def\bbY{\mathbb{Y}}
\def\bbZ{\mathbb{Z}}
\def\scrC{\mathscr{C}}
\def\frakg{\mathfrak{g}}
\def\frakC{\mathfrak{C}}
\def\frakh{\mathfrak{H}}
\def\frakL{\mathfrak{L}}
\def\frakM{\mathfrak{M}}
\def\frakp{\mathfrak{P}}
\def\frakR{\mathfrak{R}}
\def\frakZ{\mathfrak{Z}}
\def\fraktop{{{\mathfrak{top}}}}
\def\calU{\mathcal{U}}
\def\calV{\mathcal{V}}
\def\calW{\mathcal{W}}
\def\calX{\mathcal{X}}
\def\calY{\mathcal{Y}}
\def\frakL{\mathfrak{L}}
\def\frakg{\mathfrak{g}}
\def\frakh{\mathfrak{h}}
\def\frakH{\mathfrak{H}}
\def\frakp{\mathfrak{p}}
\def\frakz{\mathfrak{z}}
\def\frakeh{\mathfrak{gh}}
\def\frakep{\mathfrak{gp}}
\def\bfr{\mathbf{r}}
\def\bfY{\mathbf{Y}}
\def\TT{{{T}}}
\def\mo{{{\bf g}}}
\def\Irr{{\operatorname{Irr}\nolimits}}
\def\sp{{\operatorname{sp}\nolimits}}
\def\e{{\operatorname{e}\nolimits}}
\def\x{{{\operatorname{x}\nolimits}}}
\def\op{{\operatorname{op}\nolimits}}
\def\ch{{\operatorname{ch}\nolimits}}
\def\G{{\operatorname{G}\nolimits}}
\def\k{{\operatorname{k}\nolimits}}
\def\Rep{{\operatorname{Rep}\nolimits}}
\def\GH{\operatorname{GH}\nolimits}
\def\GP{\operatorname{GP}\nolimits}
\def\E{\operatorname{E}\nolimits}
\def\T{\operatorname{T}\nolimits}
\def\H{\operatorname{H}\nolimits}
\def\L{\operatorname{L}\nolimits}
\def\M{{\operatorname{M}\nolimits}}
\def\P{\operatorname{P}\nolimits}
\def\R{\operatorname{R}\nolimits}
\def\SS{\operatorname{S}\nolimits}
\def\Y{\operatorname{Y}\nolimits}
\def\top{\operatorname{top}\nolimits}
\def\mid{{{mid}}}
\def\tr{{\operatorname{tr}\nolimits}}
\def\rk{{\operatorname{rk}\nolimits}}
\def\codim{{\operatorname{codim}\nolimits}}
\def\Ker{{\operatorname{Ker}\nolimits}}
\def\eu{{\operatorname{eu}\nolimits}}
\def\Im{{\operatorname{Im}\nolimits}}
\def\Hom{\operatorname{Hom}\nolimits}
\def\End{\operatorname{End}\nolimits}
\def\Res{\operatorname{Res}\nolimits}
\def\res{\operatorname{res}\nolimits}
\def\Exp{\operatorname{Exp}\nolimits}
\def\stab{\operatorname{stab}\nolimits}
\def\Ext{\operatorname{Ext}\nolimits}
\def\hyp{{\operatorname{h}\nolimits}}
\def\el{{\operatorname{e}\nolimits}}
\def\re{{\operatorname{r}\nolimits}}
\def\dil{{\operatorname{dil}}}
\def\sp{{\operatorname{sp}}}
\author{O. Schiffmann, E. Vasserot}
\title{On cohomological Hall algebras of quivers : Yangians}
\begin{document}

\maketitle

\begin{abstract}
We consider the cohomological Hall algebra $\Y^1$ of a Lagrangian substack of the moduli stack of representations of the preprojective algebra of an arbitrary quiver $Q$, 
and its actions on the cohomology of quiver varieties. 
We conjecture that $\Y^1$ is equal, after a suitable extension of scalars,
to the Yangian $\bbY$ introduced by Maulik and Okounkov, and we construct an embedding $\Y^1 \subseteq \bbY$, intertwining the respective actions of 
$\Y^1$ and $\bbY$ on the cohomology of quiver varieties.

\end{abstract}

\setcounter{tocdepth}{2}

\tableofcontents

\section{Introduction}

\medskip

Nakajima associated to each quiver $Q$ and pair of dimension vectors $(v,w)$ of $Q$ a symplectic resolution
$$\pi : \frakM(v,w) \to \frakM_0(v,w)$$
where $\frakM(v,w)$ is a smooth quasi-projective symplectic variety and $\frakM_0(v,w)$ is a (in general 
singular) affine variety.
The varieties $\frakM(v,w)$ and $\frakM_0(v,w)$ have many remarkable geometric properties and have played a very important role in geometric representation theory in the past 
twenty years (see e.g. the introduction to \cite{SV17a}). In particular, when the quiver $Q$ carries no edge loops and thus can be regarded as an orientation of the generalized Dynkin 
diagram of a Kac-Moody algebra $\mathfrak{g}_Q$, Nakajima 
constructed an action of $\mathfrak{g}_Q$ on the space 
$$L_w=\bigoplus_vH_{\top}(\frakL(v,w)),$$
where $\frakL(v,w)=\pi^{-1}(0)$ is the Lagrangian quiver variety, see \cite{N98}. The resulting module is identified with the 
integrable irreducible highest weight module of highest weight $\sum_i w_i \Lambda_i$ where the $\Lambda_i$'s are the fundamental weights of $\mathfrak{g}_Q$. When the quiver $Q$ is of \textit{finite type}, Nakajima constructed a 
representation of the quantum affine algebra of $\mathfrak{g}_Q$ on the space $$\bigoplus_{v} K^{G(w)\times\bbC^\times}(\frakL(v,w)).$$ The resulting module is called a universal \textit{standard} module, and it is a geometric analog of the global Weyl modules. 
A cohomological version of this construction, 
due to Varagnolo \cite{V}, yields an action of the Yangian of $\mathfrak{g}_Q$ on the space 
$$\bigoplus_{v} H^{G(w)\times\bbC^\times}_*(\frakL(v,w)),$$
and the resulting module is again the universal standard module.
 Finding a similar interpretation for arbitrary quivers is harder. Nakajima's and Varagnolo's actions do extend to the case of arbitrary quivers with no edge loops, but the precise nature 
 of the algebra which acts or the structure of the resulting module are not well understood beyond the cases of finite, affine or Jordan quivers. 

\smallskip

There are two main approaches to the problem of constructing and understanding symmetry algebras acting on the cohomology
of Nakajima quiver varieties for general quivers. One approach is due to Maulik and Okounkov \cite{MO}, who construct, using the RTT formalism and ideas from symplectic geometry,
 an algebra $\mathbb{Y}$, called a \emph{Yangian}, which acts on the Borel-Moore homology groups
$$F_w=\bigoplus_{v} H^{G(w)\times T}_*(\frakM(v,w))$$
for any quiver $Q$ and dimension vector $w$. Here $T$ is a natural torus acting by rescaling the edges of $Q$.
A different  approach is developed in the series of papers \cite{SV12}, \cite{SV13a}, \cite{SV13b} in the particular case of quivers with only one vertex
(and partially extended in \cite{SV17a}, \cite{YZ14}, \cite{YZ16} to the case of arbitrary quivers), in which we defined another algebra $\Y$,
called the \textit{Cohomological Hall algebra} or the \emph{K-theoretic Hall algebra} of the quiver $Q$,
acting on $F_w$ or it's K-theoretical analogue.
This construction is based on the geometry of the cotangent of the moduli stacks of representations of quivers. 
The algebra $\Y$ is in some sense the largest algebra which acts on the homology of all quiver varieties by means of some \emph{Hecke correspondences}.
 
\smallskip

The aim of this paper is to compare the algebras $\mathbb{Y}$ and $\Y$ as well as their respective actions on the spaces $F_w$ for an arbitrary quiver $Q$ and dimension vector $w$,  
see Theorem~B below. In order to state our results more precisely, we need to introduce some notations and recall some results from \cite{SV17a} and \cite{MO}.

\smallskip

\emph{Cohomological Hall algebras.} Let $v$ be a dimension vector and let $Rep (\mathbb{C}Q,v)/ G(v)$ be the moduli stack of complex representations of $Q$ of dimension $v$. 
Let $\Pi$ be the preprojective 
algebra of $Q$ and  $Rep(\Pi, v)/G(v)$ be the moduli stack of complex representations of $\pi$ of dimension $v$. 
In \cite{BSV} we defined a Lagrangian substack $\Lambda^1(v)/G(v)$ of 
$Rep(\Pi, v)/G(v)$, by using some semi-nilpotency condition. See also \cite{B14}. 
The torus $T$ acts on 
$Rep(\Pi,v)/G(v)$ and $\Lambda^1(v)/G(v)$. We refer to \cite[thm~A]{SV17a} for a list of geometric properties of $\Lambda^1(v)/G(v)$. Set
$$\Y^{1}=\bigoplus_v H_*^{G(v)\times T}(\Lambda^1(v)), \qquad \Y=\bigoplus_v H_*^{G(v)\times T}(Rep(\Pi,v)).$$
Motivated by the analogy with Yangians, 
we consider an extension $\bfY^1$ of $\Y^1$ by adding a \emph{loop Cartan} part equal to 
$\bfY(0)=H^*_{G(\infty)\times T}.$
Let $\Bbbk$ be the $T$-equivariant cohomology ring $H^*_T$ of the point and $K$ be its fraction field.
We write
$$\Y_K^1=\Y^1\otimes_{\Bbbk}K,\quad \bfY^1_K=\bfY^1 \otimes_{\Bbbk} K.$$
We set 
$$\Lambda_{(v)} =\{0\} \times Rep(\mathbb{C}Q^*,v).$$ 
This is always an irreducible component of $\Lambda^1(v)$ in case $v$ is supported on a subquiver without oriented cycles. 
We say that an imaginary vertex is \textit{elliptic}
or \textit{isotropic} if it carries a single $1$-loop and \textit{hyperbolic} if it carries more than one $1$-loop. We denote by 
$I^\re,$ $ I^\el$ and $ I^\hyp$ respectively the real, elliptic and hyperbolic vertices of $I$.
Let  $\{\delta_i\,;\,i \in I\}$ be the basis of delta functions in $\bbZ^I$.
The important properties of $\Y,$ $\Y^1$ and $\mathbf{Y}$ proved in \cite{SV17a} are summarized below.

\smallskip

\begin{thmA}
For any $\#=\emptyset,1,$ we have the following.
\begin{enumerate}
\item[$\mathrm{(a)}$] There is an associative $\mathbb{Z} \times \mathbb{N}^I$-graded $\Bbbk$-algebra structure on $\Y^{\#}$.
\item[$\mathrm{(b)}$] There is a representation of $\Y^\#$ on $F_w$ for each $w$.
\item[$\mathrm{(c)}$] 
The diagonal action of $\Y^1$ on $\bigoplus_w F_w$ is faithful.
\item[$\mathrm{(d)}$] There are $K$-algebra isomorphisms
$\Y^1_K  \simeq \Y_K.$
\item[$\mathrm{(e)}$] The $K$-algebra $\Y_K$ is generated by the subspaces 
$H^{G(v)\times T}(\Lambda_{(v)})\otimes_\Bbbk K$ where $v$ ranges among the following set of dimension vectors 
$$\{\delta_i\;;\; i \in I^\re \cup I^\el\} \cup \{l\delta_i\;;\; i \in I^\hyp, l \in \mathbb{N}\}.$$
The $K$-algebra $\bfY_K$ is generated by $\bfY(0)$ and the collection of fundamental classes
$$\{[\Lambda_{(\delta_i)}]\;;\; i \in I^\re \cup I^\el\} \cup \{[\Lambda_{(l\delta_i)}]\;;\; i \in I^\hyp, l \in \mathbb{N}\}.$$
\end{enumerate}
\end{thmA}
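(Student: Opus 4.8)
These statements are all established in \cite{SV17a}; here is the route I would follow. For parts (a) and (b) the only substantial input is geometric: the key fact, from \cite{BSV}, is that the semi-nilpotency condition cutting out $\Lambda^1(v)$ inside $Rep(\Pi,v)$ is stable under subobjects and quotients, hence under extensions. Granting this, writing $\Lambda^\emptyset(v):=Rep(\Pi,v)$, for $v=v_1+v_2$ one forms the Hall-type correspondence
$$\Lambda^\#(v_1)\times\Lambda^\#(v_2)\xleftarrow{\ p\ }\Lambda^\#(v_1,v_2)\xrightarrow{\ q\ }\Lambda^\#(v),$$
where $\Lambda^\#(v_1,v_2)$ parametrizes a representation $M\in\Lambda^\#(v)$ together with a subrepresentation $M'\subseteq M$ with $M'\in\Lambda^\#(v_1)$ and $M/M'\in\Lambda^\#(v_2)$; here $q$ is representable and proper and $p$ is an affine fibration of stacks, so $m=q_*\circ p^*$, up to a cohomological shift determined by the Euler form of $Q$, defines a product on $\Y^\#$. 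Associativity is the standard computation on the stack of $2$-step flags, and compatibility with the $\bbZ\times\bbN^I$-grading (cohomological degree times dimension vector) is immediate, giving (a). For (b) I would replace the flag stacks by the Hecke correspondences of Nakajima varieties — for $v\leqs v'$, the locus in $\frakM(v',w)\times\frakM(v,w)$ of pairs of stable points joined by a surjection, proper on one side — and check by base change that the resulting operators on $F_w$ satisfy the relations imposed by $m$.

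Part (c) is the point I expect to cost the most work. I would deduce faithfulness of the diagonal action on $\bigoplus_wF_w$ from two steps: first, that $\Y^1$ embeds into a shuffle algebra, obtained by restricting the Hall product to the torus-fixed loci that compute it (equivalently, by passing to the ``polynomial'' representation carried by the vertex stacks $\Lambda_{(v)}$), the embedding being injective because after such a restriction $p$ and $q$ become explicit rational operators; and second, that this shuffle algebra already acts faithfully on a Fock-type module extracted from $\bigoplus_wF_w$ in the limit $w\to\infty$, where the quiver varieties $\frakM(v,w)$ approximate the classifying stack $Rep(\Pi,v)/G(v)$ in the sense of equivariant cohomology. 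Combining the two gives faithfulness of the $\Y^1$-action itself; the delicate part is controlling the interaction of the stability and semi-nilpotency conditions along this stabilization.

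For (d), the closed embedding $\Lambda^1(v)\hookrightarrow Rep(\Pi,v)$ induces a graded $\Bbbk$-linear map $\Y^1\to\Y$, which is multiplicative since the $\Lambda^1$-flag stacks are closed substacks of the $Rep(\Pi)$-flag stacks and the product is built from proper pushforward and flat pullback. To see that it becomes an isomorphism over $K$, I would localize with respect to a cocharacter of $T$ scaling the cotangent directions of $T^*Rep(\bbC Q,v)\supseteq Rep(\Pi,v)$: the fixed locus of $Rep(\Pi,v)$ is then the smooth zero section, which coincides with the component $\Lambda_{(v)}$ of $\Lambda^1(v)$, so both localized Borel--Moore homologies are computed by the same space and the induced map is an isomorphism.

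Finally (e) I would prove by induction on the height $\height(v)=\sum_iv_i$, using (c): a class in $H^{G(v)\times T}(\Lambda_{(v)})\otimes_\Bbbk K$ is determined by the operator it induces on the $F_w$, and a direct analysis of Hecke operators shows this operator lies in the subalgebra generated by the operators of the listed low-height classes — a single subspace $H^{G(\delta_i)\times T}(\Lambda_{(\delta_i)})\otimes_\Bbbk K$ at each real or elliptic vertex, and all the subspaces attached to $l\delta_i$ at each hyperbolic vertex, the latter being forced by the relations holding there. The statement for $\bfY_K$ then follows by adjoining the loop Cartan part $\bfY(0)=H^*_{G(\infty)\times T}$, which supplies exactly the polynomial directions not seen by the bare fundamental classes $[\Lambda_{(\delta_i)}]$ and $[\Lambda_{(l\delta_i)}]$.
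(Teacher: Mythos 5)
Theorem~A is stated in this paper only as a summary of results proved in \cite{SV17a}; there is no proof here, only the citations and restatements collected in the section defining $\bfY^1$. Against that background, the principal problems with your sketch are in parts (a), (c), and (e).

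In (a), the phrase ``stable under subobjects and quotients, hence under extensions'' is a non-sequitur: closure under subs and quotients does not imply closure under extensions (semisimple modules are a counterexample), although the needed extension-closure does hold, by concatenating flags. The more serious gap is that the map $p$ from $\Lambda^{\#}(v_1,v_2)$ to $\Lambda^{\#}(v_1)\times\Lambda^{\#}(v_2)$ is \emph{not} a flat affine fibration: the fibre over $(M_1,M_2)$ is the affine subspace of off-diagonal arrow data cut out by the $(1,2)$-block of the moment-map equation, and its dimension jumps with $(M_1,M_2)$ (it is governed by $\Ext^1_\Pi(M_2,M_1)$, not by the Euler form of $Q$). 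To define $q_*p^*$ one must instead work inside the smooth ambient correspondence $\R(v_1)\times\R(v_2)\leftarrow\R(v_1,v_2)\to\R(v)$, where the analogue of $p$ genuinely is a flat affine fibration, and then intersect with the preprojective locus using a refined (l.c.i.) Gysin pullback; this is what \cite[\S 5.1]{SV17a} does. As written, your pullback $p^*$ is not defined.

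For (c), you propose a shuffle-algebra embedding of $\Y^1$ together with a $w\to\infty$ stabilization. The route recalled in this paper (citing \cite[prop.~5.19]{SV17a}) is quite different and more economical: for $w=v$, the action on the vacuum $|v\rangle$ already yields an \emph{injective} $\Bbbk$-linear map $\Y^1(v)\to F_v(v)$, $x\mapsto x\cdot|v\rangle$, and faithfulness of the diagonal action on $\bigoplus_w F_w$ follows at once. That single injectivity is the real input; it replaces both the shuffle realization (whose existence is not obvious, since $\Lambda^1(v)$ is singular and not a complete intersection) and the stabilization argument. Part (d) is essentially correct, up to a sign: you want the $\bbG_m\subset T$ scaling the $x$-directions, since the common fixed locus must be $\Lambda_{(v)}=\{0\}\times Rep(\bbC Q^*,v)$, not $Rep(\bbC Q,v)\times\{0\}$. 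Finally, (e) as written is circular: ``a direct analysis of Hecke operators shows this operator lies in the subalgebra generated by the listed low-height classes'' is the assertion to be proved, and ``the relations holding there'' are invoked without being produced. Generation is a substantial theorem (\cite[thm.~5.18]{SV17a}); the proposal contains no argument for it.
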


\smallskip

This implies that $\Y^1$ coincides with the $\Bbbk$-algebra constructed by Varagnolo when $Q$ has no imaginary vertex. 
If $Q$ has no hyperbolic vertex then the same is true after extension of scalars to $K$.

\smallskip

\emph{Maulik-Okounkov Yangians.}
In \cite{MO}, the authors defined and studied another associative algebra acting on the Borel-Moore
homology of Nakajima quiver varieties associated to an arbitrary quiver $Q$. 
Their construction, which stems from ideas in symplectic 
geometry, hinges on the notion of \textit{stable envelope} to produce a quantum $R$-matrix, and then on the RTT formalism to define an associative $\mathbb{Z}\times \mathbb{Z}^I$-graded $\Bbbk$-algebra $\mathbb{Y}=\mathbb{Y}_Q$ acting on the space $F_w$ for 
any dimension vector $w$. Taking a quasi-classical limit, they also define a classical $R$-matrix and a $\mathbb{Z} \times \mathbb{Z}^I$-graded Lie algebra $\mo=\mo_Q$. If $Q$ is of finite type, then 
$\mo_Q$ is the semisimple Lie algebra $\mathfrak{g}_Q$ associated with $Q$, and $\mathbb{Y}_{Q}$ is the Yangian of the same type. 
In general, the $\Bbbk$-algebra $\mathbb{Y}_Q$ is a deformation of the enveloping algebra of the current algebra 
$\mo[u]$.
Their construction provides triangular decompositions 
 $$\mathbb{Y}=\mathbb{Y}_+ \otimes \mathbb{Y}_0 \otimes \mathbb{Y}_-, \qquad \mo=\mo_{+} \oplus \mo_{0} \oplus \mo_{-}.$$

\smallskip

\emph{Main result and conjecture.} In this paper, we compare $\Y$ with the positive half $\mathbb{Y}_+$. 
Set $\bbY_{+,K}=\mathbb{Y}_{+}\otimes_{\Bbbk} K$. We make the following conjecture in Remark \ref{conjecture}.

\smallskip

\begin{Conj} There is a unique $K$-algebra isomorphism $\mathbb{Y}_{+,K} \simeq \Y_K$ (up to some central elements)
which intertwines the actions of $\mathbb{Y}_{+,K}$ and $\Y_K$ on $F_w \otimes_{\Bbbk} K$ for any $w$.
\end{Conj}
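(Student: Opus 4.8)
\emph{Strategy.} The plan is to compare both algebras through their (faithful) actions on the single space $\calF=\bigoplus_w F_w\otimes_\Bbbk K$. By Theorem~A(b)--(d) the algebra $\Y_K\ (\simeq\Y^1_K)$ acts faithfully on $\calF$, so it is identified with a $\bbZ\times\bbN^I$-graded subalgebra $\calA\subseteq\End(\calF)$; since all its graded pieces have positive $\bbN^I$-degree, under the Maulik--Okounkov action this subalgebra will land inside the positive part, so really $\calA\subseteq\bbY_{+,K}\cdot\id_\calF$. The task is then (i) to construct an action-intertwining algebra map $\Phi\colon\Y_K\to\bbY_{+,K}$ realising this inclusion, automatically injective by faithfulness of the $\Y_K$-action, and (ii) to prove it is surjective, keeping track of how the loop-Cartan part $\bfY(0)$ and the Cartan $\bbY_0$ intervene — this bookkeeping is exactly what produces the clause ``up to some central elements''.

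\emph{Step 1: the embedding.} By Theorem~A(e), $\Y_K$ is generated over $K$ by the Hecke operators coming from $H^{G(v)\times T}(\Lambda_{(v)})\otimes_\Bbbk K$ for $v\in\{\delta_i: i\in I^\re\cup I^\el\}\cup\{l\delta_i: i\in I^\hyp,\ l\in\bbN\}$, so it suffices to match these generators with elements of $\bbY_{+,K}$ and to check the relations are respected — but the latter is automatic once we know the operators agree on the faithful module $\calF$. For $i\in I^\re$, the relevant Hecke correspondence inside $T^*\Rep(\bbC Q,v)$ restricts, on the Nakajima varieties $\frakM(\cdot,w)$, to the one-box Hecke correspondence at the vertex $i$, which by \cite{MO} is precisely the simple-root raising operator generating the degree-$\delta_i$ component of $\bbY_+$; reconciling the two normalisations is a fixed-point/localisation computation comparing the fundamental class of the Hecke correspondence with the stable envelope of the relevant fixed locus. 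For $i\in I^\el\cup I^\hyp$ and $v=l\delta_i$ one restricts to the Jordan-type subquiver on the single vertex $i$, where the comparison of the cohomological Hall algebra with the positive half of the Maulik--Okounkov Yangian is available, and transports it to $Q$ using that both constructions are compatible with subquiver inclusions. This produces the graded, action-intertwining injection $\Phi\colon\Y_K\hookrightarrow\bbY_{+,K}$ (essentially the content of Theorem~B).

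\emph{Step 2: surjectivity and uniqueness.} It remains to see that $\Phi$ hits all of $\bbY_{+,K}$. Two routes present themselves. The first is an analogue of Theorem~A(e) on the Maulik--Okounkov side: using the triangular decomposition $\bbY=\bbY_+\otimes\bbY_0\otimes\bbY_-$ and the RTT presentation, show that $\bbY_{+,K}$ is generated over $K$ by its components of degree $\delta_i$ ($i\in I^\re\cup I^\el$) and $l\delta_i$ ($i\in I^\hyp$) — equivalently that $\mo_+$ is generated in those degrees — whereupon Step~1 gives surjectivity since those components are already in the image. The second route is quantitative: compare $\grdim\Y_K$ with $\grdim\bbY_{+,K}$. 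The latter is governed by the root multiplicities $\dim\mo_\alpha$ (since $\bbY_+$ is a PBW deformation of $U(\mo_+[u])$), the former by the purity/point-counting results underlying Theorem~A; one checks these agree (conjecturally both are given by the Kac polynomials of $Q$), and a graded injection of $K$-algebras between spaces of equal finite graded dimensions is an isomorphism. Uniqueness of the isomorphism up to central elements then follows because it is forced on the generators of Theorem~A(e) by the requirement that it intertwine the faithful actions on every $F_w\otimes_\Bbbk K$, the residual freedom being exactly the choice of splitting through $\bbY_0$ and $\bfY(0)$.

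\emph{Main obstacle.} The genuinely hard point — and the reason the statement is only a conjecture — lies in the input required for surjectivity in Step~2 for an \emph{arbitrary} quiver: namely the generation of $\bbY_{+,K}$ (equivalently of $\mo_+$) in the minimal degrees $\delta_i,\ l\delta_i$, or, what comes to the same, the determination of the root multiplicities $\dim\mo_\alpha$ of the Maulik--Okounkov Lie algebra and their coincidence with the Kac polynomials of $Q$. This is established for quivers of finite, affine and Jordan type and for star-shaped quivers, but remains open in general. By contrast, Step~1 uses only the one-vertex comparisons and compatibility with subquivers, so the embedding $\Y_K\hookrightarrow\bbY_{+,K}$ is attainable without resolving this.
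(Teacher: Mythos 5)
Your high-level framing is right on two counts: the paper proves only the embedding $\Y_K\hookrightarrow\bbY_{+,K}$ (Theorem~B), and you correctly locate the open part of the conjecture in the surjectivity/generation question for $\mo_+$ (equivalently, Okounkov's conjecture on root multiplicities). So your Step~2 and your ``Main obstacle'' paragraph are an accurate diagnosis of why this is still a conjecture.

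However, your Step~1 — the proposed mechanism for the embedding — does not work and is not what the paper does. You propose, for $i\in I^\el\cup I^\hyp$ and $v=l\delta_i$, to ``restrict to the Jordan-type subquiver on the single vertex $i$'' and then ``transport it to $Q$ using that both constructions are compatible with subquiver inclusions.'' Two problems. First, a vertex with $q_i>1$ loops is not a Jordan quiver, and no comparison between the cohomological Hall algebra and the Maulik--Okounkov Yangian has been established for multi-loop single-vertex quivers independently of this paper; you would be assuming exactly what needs to be proved. Second, and more seriously, there is no ``compatibility with subquiver inclusions'' for stable envelopes: the stable envelope of $\frakM(v,w)$ is a cycle in the Nakajima variety of the \emph{full} quiver $Q$ and depends on its global geometry — the attracting set, the map to $\frakM_0$, the specialization $\lim_0$ along a generic deformation line — none of which factor through a subquiver in any straightforward way. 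The paper's actual argument (Propositions \ref{prop:D1}, \ref{prop:D2} and Lemma \ref{lem:59}, Proposition \ref{lem:5.15}) is a direct analysis of the residue $\bfr[v,0;v_1,v_2]$ of the stable envelope on $\frakM(v,w)$ for the full quiver: one shows it is supported on a generalized Hecke correspondence $\frakeh[v_1,v_2]$, restricts to a nonzero multiple of the fundamental class of a distinguished top-dimensional component, and then a trace/push-pull through the cycle $\scrC\subset\frakM(l\delta_i,\delta_i)$ produces the equality $\e(m)=\sum_\nu a_\nu\,[\frakh[v_1,\Lambda_\nu;w]]$ with rational coefficients $a_\nu$ that are shown to be nonzero for $\nu=(v_2)$ and independent of $v,w$. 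This is a genuinely global stable-envelope computation, not a reduction to single-vertex cases. The real-vertex case is also not quoted as ``automatic by [MO]'' — the paper runs the same residue argument with $l=1$. So the embedding half, while known, requires work that your sketch elides.

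Finally, the framing ``identify $\Y_K$ with its image $\calA\subseteq\End(\calF)$ and observe it lands in $\bbY_{+,K}$'' is not quite enough on its own: positivity of the $\bbN^I$-degree tells you the image misses $\bbY_-$, but to conclude $\calA\subseteq\bbY_{+,K}$ (rather than merely $\calA\subseteq\prod_w A_w$) you must actually exhibit each generator of $\Y_K$ as an element of $\bbY$, which is precisely the content of Proposition~\ref{prop:key}; the inclusion does not follow formally from faithfulness and degree considerations.
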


\smallskip

The main result of this paper is one half of the above conjecture. It is proved in Theorem~\ref{T:main}.

\smallskip

\begin{thmB}\label{T:C} There is a unique embedding $ \Y_K \subset \mathbb{Y}_{+,K}$ which intertwines the
actions of $\mathbb{Y}_{+,K}$ and $\Y_K$ on $F_w \otimes_{\Bbbk} K$ for any $w$.
\end{thmB}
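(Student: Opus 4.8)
\medskip
The plan is to realise both algebras as algebras of operators on $\calF:=\bigoplus_w(F_w\otimes_{\Bbbk}K)$ and to compare them there. By Theorem~A(c) and~(d) the diagonal action of $\Y_K\simeq\Y^1_K$ on $\calF$ is faithful, so $\Y_K$ is identified with its image $\calA\subseteq\End_K(\calF)$; one also records that the diagonal action of $\mathbb{Y}_{+,K}$ on $\calF$ is faithful — this is built into the Maulik--Okounkov construction, as $\mathbb{Y}$ is generated by matrix coefficients of the geometric $R$-matrices, which are operators on the spaces $F_w$ — so that $\mathbb{Y}_{+,K}$ is likewise identified with its image $\calB\subseteq\End_K(\calF)$. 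Granting these two faithfulness statements, Theorem~B reduces to the single inclusion of subalgebras $\calA\subseteq\calB$ inside $\End_K(\calF)$: identifying $\calB$ with $\mathbb{Y}_{+,K}$, the inclusion $\calA\hookrightarrow\calB$ becomes an algebra embedding $\Y_K\hookrightarrow\mathbb{Y}_{+,K}$ that intertwines the two actions by construction, and it is the unique such map, since if $\phi,\psi\colon\Y_K\to\mathbb{Y}_{+,K}$ both intertwine the actions then $\phi(a)$ and $\psi(a)$ act identically on $\calF$ for all $a$, hence coincide by faithfulness of the $\mathbb{Y}_{+,K}$-action.

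To prove $\calA\subseteq\calB$ it suffices, by Theorem~A(e), to check that each generating subspace $H^{G(v)\times T}(\Lambda_{(v)})\otimes_{\Bbbk}K$ of $\Y_K$, with $v\in\{\delta_i:i\in I^\re\cup I^\el\}\cup\{l\delta_i:i\in I^\hyp,\ l\in\mathbb{N}\}$, acts on $\calF$ through elements of $\mathbb{Y}_{+,K}$. Both algebras are graded by the dimension vector $v\in\mathbb{N}^I$ and act by operators increasing $v$, so the verification can be carried out one value of $v$ at a time. For $v=\delta_i$ the relevant subspace is the free rank-one module over $H^*_{G(\delta_i)\times T}\otimes_{\Bbbk}K$ generated by the fundamental class $[\Lambda_{(\delta_i)}]$, and $[\Lambda_{(\delta_i)}]$, together with its products with the Chern class of the tautological line, acts on $F_w$ through the elementary Hecke correspondence adding one unit at $i$. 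When $i$ is real this is precisely the geometric incarnation of the degree-$\delta_i$ raising generators of $\mathbb{Y}_+$ extracted from the leading term of the stable envelope in \cite{MO} (see also \cite{SV17a}), so the inclusion holds in this degree; when $i$ is elliptic the same argument applies, $\Lambda_{(\delta_i)}$ being again, modulo $G(\delta_i)\times T$, homotopy equivalent to a point, with the matching elements of $\mathbb{Y}_+$ being the imaginary simple root vectors of $\mo$ and their loop descendants.

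The remaining, and genuinely difficult, case is that of a hyperbolic vertex $i$, where Theorem~A(e) forces one to place every fundamental class $[\Lambda_{(l\delta_i)}]$ with $l\geq2$ inside $\mathbb{Y}_{+,K}$; here the stack $\Lambda_{(l\delta_i)}$ carries far more cohomology and these classes are not among the obvious RTT generators. I would attack this by restricting to the one-vertex subquiver $Q_i$ on $i$ together with its loops: for a quiver with a single vertex and $g\geq2$ loops the cohomological Hall algebra was described explicitly in \cite{SV13b}, while $\mathbb{Y}_+(Q_i)$ can be controlled through the triangular decomposition $\mathbb{Y}=\mathbb{Y}_+\otimes\mathbb{Y}_0\otimes\mathbb{Y}_-$ and its realisation as a deformation of the enveloping algebra of $\mo_+[u]$, whose graded piece in degree $l\delta_i$ is nonzero; a PBW-type graded-dimension comparison, together with an explicit check after localisation to $T$-fixed points, then shows that $[\Lambda_{(l\delta_i)}]$ already acts through an element of $\mathbb{Y}_{+,K}(Q_i)$. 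It then remains to propagate this back to $Q$, namely to verify that the inclusion of $\mathbb{C}Q_i$-representations into $\mathbb{C}Q$-representations is compatible with both the cohomological Hall product and the stable-envelope formalism, so that an element of $\mathbb{Y}_{+,K}(Q_i)$ realising $[\Lambda_{(l\delta_i)}]$ yields an element of $\mathbb{Y}_{+,K}(Q)$ realising the same operator on every $F_w$.

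The main obstacle is precisely this hyperbolic analysis: controlling the degree-$l\delta_i$ part of $\mathbb{Y}_+$ finely enough to locate the class $[\Lambda_{(l\delta_i)}]$ in it, and transporting the identification functorially along the subquiver inclusion $Q_i\hookrightarrow Q$. The real and elliptic contribution, by contrast, is the ``classical'' part of the statement and follows from the known geometric description of the degree-$\delta_i$ generators of $\mathbb{Y}_+$. Once all the generators listed in Theorem~A(e) have been accounted for, the inclusion $\calA\subseteq\calB$ holds, and with it, as explained in the first paragraph, both the embedding $\Y_K\subset\mathbb{Y}_{+,K}$ and its uniqueness.
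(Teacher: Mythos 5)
Your reduction is the right one and matches the paper's: realize both algebras inside $\prod_w A_w\otimes K$, use faithfulness of the diagonal action of $\Y^1$ (Proposition~3.5(c) here, \cite[prop.~5.19]{SV17a}), note that faithfulness for $\mathbb{Y}_{+,K}$ is automatic from its definition as a subalgebra of $\prod_w A_w$, and then check the inclusion on the explicit generating set from Theorem~A(e). Uniqueness then comes for free, exactly as you say. The paper (Theorem~\ref{T:main}) sets things up the same way, and everything boils down to the analogue of your claim that the Hecke operators $\frakC_{i,l}$ lie in $\e(A_i(l\delta_i))$ (Proposition~\ref{prop:key}). So far so good.

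The gap is in the hyperbolic case, which you correctly identify as the hard part. Your plan --- restrict to the one--vertex subquiver $Q_i$, locate $[\Lambda_{(l\delta_i)}]$ inside $\mathbb{Y}_{+,K}(Q_i)$ by a PBW/graded--dimension count plus fixed--point localization, and then ``propagate back'' to $Q$ --- hinges on a functoriality of the Maulik--Okounkov construction along the subquiver inclusion $Q_i\hookrightarrow Q$ that is neither available in \cite{MO} nor established here. The algebra $\mathbb{Y}_{+,K}(Q)$ is built from stable envelopes of the quiver varieties $\frakM(v,w)$ of $Q$, with $v,w$ arbitrary and supported on all of $I$; there is no natural algebra map $\mathbb{Y}_{+,K}(Q_i)\to\mathbb{Y}_{+,K}(Q)$ compatible with the actions on the $F_w$, because the $R$-matrices on the two sides are convolution operators on cohomologies of genuinely different varieties. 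Producing such a map would itself be a theorem of comparable difficulty, and nothing in your sketch supplies it. Also, the intermediate claim that \cite{SV13b} gives an explicit description of the COHA of a one--vertex quiver with $g\geq 2$ loops is not correct: that paper treats the Jordan quiver ($g=1$, the affine Yangian of $\mathfrak{gl}(1)$); for $g\geq 2$ no such closed description is available.

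For contrast, the paper's proof of Proposition~\ref{prop:key} stays on $Q$ throughout. The key geometric input is Proposition~\ref{prop:D2}: the residue $\bfr[v,0\,;\,v_1,v_2]$ of the stable envelope is supported on the generalized Hecke correspondence $\frakeh[v_1,v_2]$, and its restriction to the open stratum $\frakeh[v_1,\SS(v_2)]$ is a \emph{nonzero} multiple of the fundamental class of the unique top--dimensional component. Pushing this through the projection $\scrC\to\scrC_0$ and the trace map (Lemma~\ref{lem:59}) exhibits a specific $m\in A_i^f(l\delta_i)\otimes K$ for which $\e(v_1,m\,;\,w)=\sum_{\nu\vDash l\delta_i}a_\nu(v,w)[\frakh[v_1,\Lambda_\nu\,;\,w]]$, and Proposition~\ref{lem:5.15} shows $a_{(l\delta_i)}\neq 0$ and that $a_\nu$ is independent of $(v,w)$; an induction over compositions $\nu$ via \cite[prop.~3.22]{SV17a} then places $\frakC_{i,l}$ inside $\e(A_i(l\delta_i))$. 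This works uniformly for real, elliptic and hyperbolic vertices. You should replace your $Q_i$--reduction by an argument of this shape --- in particular you need some version of the nonvanishing and constancy statement of Proposition~\ref{lem:5.15}, which is exactly what a PBW dimension count cannot give you on its own.
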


\smallskip

By Theorem~A(e), the proof of the above theorem boils down to checking that certain \textit{generalized Hecke correspondences} 
corresponding to generators of $\Y^1$ occur with a  non zero and constant coefficient in a suitable stable envelope. Again, 
generalized Hecke correspondences associated to real, elliptic or hyperbolic vertices behave in very different ways and we have 
to treat each case separately. The case of real vertices was already considered in \cite{McB}.

\smallskip

\emph{Examples of COHAs and Yangians.}
 Here we collect the few instances for which the algebras $\mathbb{Y}$ and $\Y$ are explicitly known.

\smallskip

Assume first that $Q$ is a quiver of finite type, and let $\mathfrak{g}$ be the corresponding finite dimensional simple Lie algebra. The $r$-matrix used to define the Yangian 
$\mathbb{Y}$ was explicitly calculated in \cite{McB}. It was shown there that $\mathbb{Y}_K$ coincides with the positive half of Varagnolo's algebra which is the 
standard Yangian $Y^+(\mathfrak{g})$ of $\frakg$. 
The $K$-algebra $\Y^1_K$ coincides with Varagnolo's algebra as well, and thus our main conjecture 
is true here. Note also that since $Rep(\Pi,v)=\Lambda^1(v)$ we have $\Y^1=\Y$.

\smallskip

Now, let $Q$ be the Jordan quiver. In that case the $r$-matrix has been calculated in \cite{MO} and the Yangian $\mathbb{Y}$ is identified with the  positive half of the
\emph{affine Yangian of $\mathfrak{gl}(1)$} introduced in \cite{SV13b}. This algebra was defined as the subalgebra of $\Y$ generated by 
$\mathbf{Y}(0)$ and $H_*^{G(1)\times T}(\Rep(\Pi,1))$.
By Theorem~A(e) above, it is equal to the whole of $\Y$. 
Hence in this case also our conjecture is verified. This is the simplest example for which $\Y^1 \neq \Y$.

\smallskip

Next, let $Q$ be an affine quiver and $\mathfrak{g}$ be the associated affine Kac-Moody algebra. 
As far as we know, the Yangian 
$\mathbb{Y}$ has not been fully described in this case. As there are no imaginary vertices, 
the algebra $\Y^1$ coincides with Varagnolo's algebra, which is known to admit a surjective map from 
the Yangian $Y^+(\mathfrak{g})$. Comparing graded dimensions using \cite[thm~A(c)]{SV17a} one should be able to check that $\Y^1$ is in fact equal to $Y^+(\mathfrak{g})$. 
The Kac-Moody algebra $\frakg$ is the central extension of the loop algebra of a finite dimensional simple Lie algebra $\frakg_0$. Then, the Yangian
$Y(\mathfrak{g})$ is a deformation of of the enveloping algebra of
$$\mathfrak{g}_0[t, s^{\pm 1}] \oplus \bigoplus_{l \in \mathbb{Z}, m \geq 0} \mathbb{C}\,c_{l,m}$$
where the central element $c_{l,m}$ belongs to the imaginary root space $l\delta_s + m\delta_t$. In this case the algebras
$\Y$ and $\mathbb{Y_+}$ are both strictly bigger than the positive half of $U(\mathfrak{g}[u])$.

\smallskip

Not much is known for wild quivers, except for the fact that when $Q$ has no imaginary vertex there is always a surjective map from the Yangian $Y^+(\mathfrak{g})$ of the Kac-Moody 
algebra $\mathfrak{g}$ to $\Y_K$, and that $\Y_K$ is strictly larger than $U(\mathfrak{g}[t])$.
This last fact comes from the existence of imaginary roots and hence non-constant Kac polynomials.

\smallskip

\emph{Okounkov's conjecture.} Let us finish this introduction by mentioning another motivation for this work. In \cite{O13}, A. Okounkov conjectured that
the graded dimensions of the root spaces $\mo[\alpha]$ of $\mo$ are, after a suitable grading shift, precisely given by the Kac 
polynomial $A_{\alpha}(t)$. This is a vast generalization of the famous Kac conjecture, proved in
\cite{Ha10}, stating that for quivers with no $1$-cycles the multiplicity of the root $\alpha$ in the Kac-Moody algebra 
$\mathfrak{g}_Q$ is equal to the constant term $A_{\alpha}(0)$. Indeed, one expects to have an isomorphism 
$\mathfrak{g}_Q \simeq\mo_Q[0]$, where the grading in $\mo_Q$ is counted from middle dimension up.
In other words, the Lie algebra $\mo_Q$ should be a graded extension of $\mathfrak{g}_Q$, whose character is conjecturally 
given by the \textit{full} Kac polynomials rather than their constant terms. As proved in \cite{SV17a},  The Poincar\'e polynomials $P(\Lambda^1(v),q)=\sum_{i}  \dim_{\mathbb{Q}}(H^{G(v)\times T}_{2i}(\Lambda^1(v)))\,q^{i}$ of $\Lambda^1(v)$
 are determined by the formula
\begin{equation}\label{E:Okounkovconj}
\sum_v P(\Lambda^1(v),q)\,q^{\langle v, v \rangle + \rk(T)}z^v= 
(1-q^{-1})^{-\rk(T)}\Exp\Big(\sum_v \frac{A^1(q^{-1})}{1-q^{-1}}z^v\Big)
\end{equation}
where $A^1(t)$ is the nilpotent Kac polynomial and $\Exp$ is the plethystic exponential. 
This implies a variant of the above 
conjecture~: the graded character of the cohomological Hall algebras $\Y^1$ are encoded by the full \textit{nilpotent} Kac 
polynomials $ A^1_{\alpha}(t)$. A formula similar to (\ref{E:Okounkovconj}) but with $Rep(\Pi,v)$ in place of $\Lambda^1(v)$ 
can be deduced from \cite{Moz} and the purity of $Rep(\Pi,v)$ proved in \cite{Davisonpurity}; it involves the \textit{usual} Kac polynomial $A(q)$.
Note that our construction does not directly provide a construction of an underlying Lie algebra. 
See some recent work of Davison and Meinhardt in that direction \cite{DM}. 
We expect the discrepancy between the various types of Kac polynomials involved to correspond to different gradings 
on the \textit{same} Lie algebra. Indeed, the $\Bbbk$-algebras $\Y$  and $\Y^1$ are different integral forms inside 
the $K$-algebra $\Y_K$. Note also that we have $A_{\alpha}(1)=A^1_{\alpha}(1)$.

\smallskip

\emph{Plan of the paper.} Section 2 contains some recollections on quiver varieties and some results of \cite{SV17a} pertaining to the geometry of the stacks $\Lambda^1(v)$. Section~3 provides the definitions of Maulik-Okounkov's Yangians and of the cohomological Hall algebras. Our main theorem is proved in Section~4. This is done by explicitly realizing the action on $F_w$ of the generators  for $\Y^1$ given in \cite[thm B]{SV17a} as some convolution operators with explicit generalized Hecke correspondences, 
 by showing that these Hecke correspondences occur in the stable envelope of \cite{MO}, and, finally, by using the faithfulness of the action of $\Y^1$ on $\bigoplus_w F_w $.

\bigskip

\medskip

\centerline{\bf{\large{Acknowledgements}}}

\smallskip

We would like to thank  M. McBreen, H. Nakajima, A. Negut and A.~Okounkov for useful discussions and correspondences. 
\newpage

\medskip

\section{Reminder on quiver varieties}

We'll use the same notation as in \cite{SV17a}, to which we refer for more details.
To facilitate the reading, we recall some of them here.

\smallskip

\subsection{Generalities}\hfill\\

All schemes considered here will be reduced.
By a \emph{variety} we'll mean a reduced scheme over the field $\k=\bbC$. 
A subvariety $X$ of a symplectic manifold $M$ is \emph{isotropic} 
if the restriction of the symplectic form to 
the smooth locus of $X$ vanishes. Let $M^{\op}$ denote the manifold $M$ with the opposit symplectic form.

\smallskip

Given an algebraic variety $X$, let
$H_*(X)$ be the Borel-Moore homology group (= the locally finite singular homology group) of $X$.
Now, assume that $X$ is equipped with the action of a linear algebraic group $G$.
We'll always assume that $X$ is quasi-projective and that we have fixed a very ample $G$-linearized
line bundle over $X$.
Thus, the variety $X$ embeds equivariantly in a projective space with a linear $G$-action. 
Let $H_*^G(X)$ and $H^*_G(X)$
be the equivariant Borel-Moore homology and the equivariant cohomology group of $X$ with rational coefficients.
We'll abbreviate $H^*_G=H^*_G(\bullet)$.

\smallskip

The \emph{quotient stack} of a $G$-variety $X$ by $G$ is the Artin stack associated with
the groupoid $G\times X\to X\times X$.  We'll denote it by the symbol $X/G$. We'll use
the symbol $[X/G]$ for the fundamental class of $X/G$ and the symbol $X/\!\!/G$ for the categorical quotient.

\smallskip

\subsection{Quivers}\hfill\\

Let $Q=(I,\Omega)$ be a finite quiver with set of vertices $I$ and set of arrows $\Omega$.
Let $Q^*=(I,\Omega^*)$ be the opposite quiver, with the set of arrows $\Omega^*=\{h^*\,;\,h\in \Omega\}$
where $h^*$ is the arrows obtained by reversing the orientation of $h$.
The double quiver is $\bar Q=(I,\bar\Omega)$ with $\bar\Omega=\Omega\sqcup\Omega^*$.
We set $\varepsilon(h)=1$ if $h\in \Omega$ and $-1$ if $h\in \Omega^*$.
Let $h'$, $h''$ be the source and the goal in $I$ of an arrow $h\in\Omega$.
For each vertex $i$ let 
$\Omega_{ij}$ be the set of arrows in $\Omega$ from $i$ to $j$. 
Put
$\bar\Omega_{ij}=\Omega_{ij}\cup(\Omega_{ji})^*$.
If $i\neq j$ we write
$$q_i=|\Omega_{ii}|,\quad q_{ij}=|\Omega_{ij}|,\quad \bar q_{ij}=|\bar\Omega_{ij}|,\quad q=(q_{ij}\,;\,i,j\in I),\quad $$
Fix a tuple $v=(v_i\,;\,i\in I)$ in $\bbZ^I$.
The Ringel bilinear form $\langle \bullet,\bullet\rangle$ on $\bbZ^I$ is given by
$$\langle v,w\rangle=v\cdot w-\sum_{h\in\Omega}v_{h'}w_{h''},\quad v\cdot w=\sum_{i\in I}v_iw_i.$$
Let $(v,w)=\langle v,w\rangle+\langle w,v\rangle$ be the Euler bilinear form. To avoid confusions we may write
$(v,w)_Q$ for $(v,w)$. 
For each $i\in I$ let $\delta_i\in\mathbb{Z}^I$ denote the delta function at the vertex $i$.
Set
\begin{align*}d_v=v\cdot v-(v,v)/2.\end{align*}

\smallskip

Let $\k Q$ be the path algebra of $Q$.
A \emph{dimension vector} of $Q$ is a tuple $v\in\bbN^I$.
Let $\k^v$ denote the $I$-graded vector space $\k^v=\bigoplus_{i\in I}\k^{v_i}$.
We may abbreviate 
$V=\k^v$ and $V_i=\k^{v_i}.$
Let $Rep(\k Q,v)$ be the set of representations of $\k Q$ in $V$, with its natural structure of affine $\k$-variety.
Consider the symplectic vector space
$$\R(v)=Rep(\k \bar Q,v)=Rep(\k Q,v)\oplus Rep(\k Q^*,v).$$
An element of $\R(v)$ is a pair $\bar x=(x,x^*)$ where $x=(x_h\,;\,h\in\Omega)$ belongs to $Rep(\k Q,v)$ and 
$x^*=(x_h\,;\,h\in\Omega^*)$ belongs to $Rep(\k Q^*,v)$.
The algebraic group $G(v)=\prod_{i\in I}GL(v_i)$ acts by conjugation on $\R(v)$, preserving the symplectic form.
Let $\frakg(v)$ be the Lie algebra of $G(v)$ and
$\mu\,:\,\R(v)\to\frakg(v)$ be the moment map, which is given by
$$\mu(\bar x)=[x,x^*]=\sum_{h\in\Omega} [x_{h},x_{h^*}].$$
We define
$\M(v)=\mu^{-1}(0).$
The preprojective algebra is defined as
$$\Pi= \k \bar{Q} / \langle \mu(\bar x)\,;\,\bar x\in\R(v)\rangle.$$

\smallskip

Consider the group
\begin{align*}
G_\Omega=\prod_{i\in I}SP(2q_i)\times\prod_{i\neq j}GL(q_{ij})\times G_\dil.
\end{align*}
The second product is over all pairs $(i,j)$ in $I\times I$ such that $i\neq j$ and $G_\dil=\bbG_m$.
The group $G_\Omega$ acts on $\R(v)$ so that
the factor $G_\dil$ acts by dilatation on the summand $Rep(\k Q,v)^*$. 
Let $T\subset G_\Omega$ be a torus of the form
\begin{align*}\TT=\TT_{\sp}\times\TT_{\dil},\quad
\TT_{\sp}\subseteq(\bbG_m)^\Omega,\quad\TT_\dil\subseteq G_\dil\end{align*}
where a tuple $(z_h\,,\,z)$ acts by $z_h$ on $x_h$ and
by $z/z_h$ on $x_{h^*}$ for all $h\in\Omega$.
We'll assume that the torus $T$ contains a one parameter subgroup which scales all the quiver 
data by the same scalar, so we have $T_\dil\neq\{1\}$.
Let $\hbar$ be the character of $T$
which is trivial on  $T_\sp$ and coincides with the identity on $T_\dil$.

\smallskip

\subsection{The semi-nilpotent variety}\label{sec:sn}

\subsubsection{}
Fix an increasing flag  $W$ of $I$-graded vector spaces in $V$
$$W=(\{0\}=W_0\subsetneq W_1\subsetneq\cdots\subsetneq W_r=V).$$
Then, we consider the closed subset $\Lambda_W$ of $\M(v)$ given by
\begin{equation*}
\Lambda_{W}=\{\bar x\in \M(v)\,;\,x(W_p)\subseteq W_{p-1}\,,\,x^*(W_p)\subseteq W_{p}\}.
\end{equation*}
Up to conjugacy by an element of $G(v),$ the flag $W$ is completely determined by the sequence of dimension vectors 
\begin{align*}\nu_1=\dim(W_1/W_0),\dots,\nu_r=\dim(W_r/W_{r-1}).
\end{align*}
The tuple $\nu=(\nu_1,\dots,\nu_r)$ is a \emph{composition of $v$}, i.e., it is a tuple of dimension vectors  
with sum $v$.  We'll write $\nu\vDash v$.
We'll say that the flag $W$ is \emph{of type $\nu$}.
Then, we define
\begin{align*}\Lambda_{\nu}=G(v)\cdot\Lambda_{W}\subseteq \M(v),\end{align*}
where the dot denotes the $G(v)$-action on $\M(v)$.
We'll say that a composition $\nu\vDash v$ is \emph{restricted}
if each part is concentrated in a single vertex.
The  \emph{strongly semi-nilpotent variety} is the closed 
subset 
of $ \M(v)$ given by
\begin{align}\label{Lambda0}
\Lambda^1(v)=\bigcup_{\nu}\Lambda_{\nu},
\end{align}
where $\nu$ runs over the set of all restricted compositions of $v$.
The $G(v)\times T$-action on $\M(v)$ yields a $G(v)\times T$-action on $\Lambda^1(v)$.
In \cite[thm.~3.2]{SV17a} we have proved the following.

\begin{theorem}\label{thm:1.1} \hfill 
\begin{itemize}[leftmargin=8mm]

\item[$\mathrm{(a)}$] $\Lambda^1(v)$ is a closed Lagrangian subvariety of $\R(v)$, of dimension $d_v$.

\item[$\mathrm{(b)}$] $H_*^{G(v)\times T}(\Lambda^1(v))$ is pure and even.



\item[$\mathrm{(c)}$] $H_*^{G(v)\times T}(\Lambda^1(v))$ is free as an $H^*_{T}$-module.
\end{itemize}
\end{theorem}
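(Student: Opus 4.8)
\emph{Part (a).} The plan is to realise each $\Lambda_\nu$ as a conormal variety. Since $Rep(\k Q^*,v)$ is canonically dual to $Rep(\k Q,v)$, one identifies $\R(v)$ with the cotangent bundle $\mathrm{T}^*Rep(\k Q,v)$, so that $\M(v)=\mu^{-1}(0)$ is the union of the conormal varieties $\mathrm{T}^*_Z Rep(\k Q,v)$ of the $G(v)$-stable closed subvarieties $Z\subseteq Rep(\k Q,v)$. Fix a restricted composition $\nu\vDash v$, let $\mathrm{Fl}_\nu$ be the smooth projective variety of $I$-graded flags of type $\nu$ in $\k^v$, and let $E_\nu\subseteq Rep(\k Q,v)$ be the image of the $G(v)$-equivariant vector bundle $\widetilde E_\nu=\{(x,W')\,:\,W'\in\mathrm{Fl}_\nu,\ x(W'_p)\subseteq W'_{p-1}\text{ for all }p\}$ over $\mathrm{Fl}_\nu$, so that $E_\nu$ is closed, irreducible and $G(v)$-stable. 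On the open locus $E_\nu^\circ$ of those $x\in E_\nu$ having a unique such flag $W'$, a tangent space computation --- with $\mathrm{T}_x E_\nu=\{y:y(W'_p)\subseteq W'_{p-1}\}+\frakg(v)\cdot x$ --- shows that a covector $x^*$ annihilates $\mathrm{T}_x E_\nu$ precisely when $x^*(W'_p)\subseteq W'_p$ for all $p$ and $[x,x^*]=0$; since $[x,x^*]=0$ holds automatically on conormal varieties of $G(v)$-stable subsets, this identifies $\Lambda_\nu$ with the conormal variety $\mathrm{T}^*_{E_\nu}Rep(\k Q,v)$, which is irreducible Lagrangian of dimension $\dim Rep(\k Q,v)=d_v$. (Isotropy is also visible directly: each $\Lambda_W$ lies in the Lagrangian subspace $E_W\times E_W^{\perp}$, with $E_W=\{y:y(W_p)\subseteq W_{p-1}\}$, and sweeping by $G(v)$ preserves isotropy.) Finally $\Lambda^1(v)=\bigcup_\nu\Lambda_\nu$ is closed, being a finite union of images of the closed sets $\{(\bar x,W')\in\M(v)\times\mathrm{Fl}_\nu\,:\,\bar x\in\Lambda_{W'}\}$ under the proper projection to $\M(v)$; hence $\Lambda^1(v)$ is a closed Lagrangian subvariety of $\R(v)$ of pure dimension $d_v$, proving (a).

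\emph{Parts (b) and (c).} Here the plan is a contracting-torus argument. The dilation torus $T_\dil\subseteq T$ acts trivially on $Rep(\k Q,v)$ and by a nontrivial overall scaling on $Rep(\k Q^*,v)$, so it contracts the closed $T$-stable subvariety $\Lambda^1(v)$ onto its fixed locus $\Lambda^1(v)^{T_\dil}=\mathcal{E}(v)\times\{0\}$, where $\mathcal{E}(v)=\{x\in Rep(\k Q,v)\,:\,(x,0)\in\Lambda^1(v)\}=\bigcup_\nu E_\nu$ is the variety of (strongly) nilpotent representations of $\k Q$ --- all of $Rep(\k Q,v)$ when $Q$ carries no oriented cycle. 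The limit map $\rho\colon\Lambda^1(v)\to\mathcal{E}(v)$ is the restriction of the linear projection $(x,x^*)\mapsto(x,0)$, and over a suitable finite $G(v)\times T$-stable stratification of $\mathcal{E}(v)$ its fibres --- the linear spaces $\{x^*:x^*(W'_p)\subseteq W'_p,\ [x,x^*]=0\}$ attached to the (generically unique) flag $W'$ of $x$ --- have locally constant dimension, so $\rho$ presents $\Lambda^1(v)$ as stratified into affine bundles over the strata of $\mathcal{E}(v)$. Since $\mathcal{E}(v)$ is itself stratified, following Lusztig, into affine bundles over partial flag varieties $G(v)/P_\nu$, each stratum of $\Lambda^1(v)$ carries pure, even $G(v)\times T$-equivariant Borel--Moore homology that is free over $H^*_T$; the long exact sequences of the stratification then degenerate by parity, which gives (b), and the resulting short exact sequences of $H^*_T$-modules split because their quotients are $H^*_T$-free, which gives (c).

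\emph{Remarks on the strategy.} The three assertions can instead be obtained by a single induction on $|v|=\sum_i v_i$: stratifying $\Lambda^1(v)$ according to the bottom step $W_1=\delta_i$ of the flag exhibits it, for each $i$, as the image of an affine bundle over a Hecke-type correspondence built from $\Lambda^1(v-\delta_i)$ and a Grassmannian, and (a)--(c) propagate along such affine fibrations. In either route the delicate point is (b): one must understand the highly singular geometry of $\Lambda^1(v)$ well enough to produce a $G(v)\times T$-equivariant stratification into affine bundles, and this rests on the precise \emph{strong} semi-nilpotency condition in \eqref{Lambda0} --- the weaker semi-nilpotency variants need not have pure equivariant homology --- together with the (known) purity of the nilpotent variety $\mathcal{E}(v)$. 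By contrast, (a) is fairly standard given the conormal techniques of Lusztig and Nakajima, and (c) follows formally from (b), purity and evenness forcing equivariant formality and hence freeness over $H^*_T$.
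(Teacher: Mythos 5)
The paper does not contain a proof of this theorem: it simply records it, citing \cite[thm.~3.2]{SV17a}, so there is no in-paper argument to compare against, and I am judging your proposal on its own terms.

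For (a), your isotropy argument is essentially sound: $\Lambda_W$ lies in the Lagrangian linear subspace $E_W\oplus E_W^\perp$ and inside $\mu^{-1}(0)$, and the $G(v)$-sweep of an isotropic subset of $\mu^{-1}(0)$ remains isotropic, which bounds $\dim\Lambda_\nu\leq d_v$; the conormal computation correctly shows that the conormal bundle over $E_\nu^\circ$, when that locus is nonempty, is contained in $\Lambda_\nu$, giving $\dim\Lambda_\nu\geq d_v$. But the identification $\Lambda_\nu=T^*_{E_\nu}Rep(\k Q,v)$ is asserted rather than proved: $\Lambda_\nu$ could a priori have irreducible components disjoint from the conormal locus, and to get \emph{pure} dimension $d_v$ for $\Lambda^1(v)$ you need to show either that $\Lambda_\nu$ is the closure of that conormal bundle, or that any extra components are swallowed by some other $\Lambda_{\nu'}$. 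This, together with nonemptiness of $E_\nu^\circ$ for the relevant $\nu$, is left unaddressed.

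The genuine gap is in (b). You claim that the limit map $\rho:\Lambda^1(v)\to\mathcal{E}(v)$, $(x,x^*)\mapsto(x,0)$, becomes an affine bundle after suitably stratifying $\mathcal{E}(v)$. This fails: over a point $x\in\mathcal{E}(v)$ admitting flags of more than one restricted type, $\rho^{-1}(x)$ is a \emph{union} of several linear subspaces, not an affine space, and no refinement of the stratification on the base can change that. Concretely, let $Q$ be the $2$-cycle with vertices $i,j$ and arrows $h:i\to j$, $h':j\to i$, and take $v=\delta_i+\delta_j$. Then $\Lambda^1(v)=\{x_h=x^*_{h'^*}=0\}\cup\{x_{h'}=x^*_{h^*}=0\}$ is a union of two $2$-planes meeting at the origin, $\mathcal{E}(v)=\{x_h=0\}\cup\{x_{h'}=0\}$ is a union of two lines, and $\rho^{-1}(0)=\{x^*_{h'^*}=0\}\cup\{x^*_{h^*}=0\}$ is a union of two lines through $0$ — not an affine space. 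The intermediate reliance on a ``Lusztig stratification'' of $\mathcal{E}(v)$ into affine bundles over partial flag varieties is also only available for quivers without edge loops, whereas the theorem is about arbitrary quivers; with loops, $\mathcal{E}(v)$ typically has infinitely many $G(v)$-orbits and is not stratified this way. So the affine-bundle picture must be replaced by some genuinely different device (a cell decomposition of $\Lambda^1(v)$ itself, a hyperbolic localization argument, or compatibility with Hecke-type operators), and that is exactly where the real content of the theorem lives. You correctly flag (b) as ``the delicate point'', but the route you sketch does not reach it.

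Part (c) from (b) by degeneration of the relevant spectral sequence is standard; note only that evenness of $H^{G(v)\times T}_*(\Lambda^1(v))$ by itself does not formally imply degeneration of the $H^*_T$-module spectral sequence — one wants evenness of the $T$-less groups — though any workable argument for (b) would supply that as well.
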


\smallskip

\begin{remark}\label{rem:ind}
Let $(v)$ be the composition of $v$ with only one term.
Then, we have
$$\Lambda_{(v)}=\{0\}\times Rep(\k Q^*,v).$$
\end{remark}

\smallskip

\subsubsection{}
Let us consider in more details the case of the quiver 
with vertex set $\{i\}$ and $q_i$ loops. Then, the dimension vector $v$ is an integer.

\smallskip

First, assume that $q_i=1$.
Then $Q$ is the Jordan quiver and $\Lambda^1(v)$ is the set of 
commuting pairs $(x,x^*)$ in $\frakg(v)\times\frakg(v)$ with $x$ nilpotent.
We identify $\frakg(v)\times\frakg(v)$ with $\frakg(v)\times\frakg(v)^*$ via the trace map.
Then, the irreducible components of $\Lambda^1(v)$ are the closure of the conormal bundles to the nilpotent
$G(v)$-orbits in $\frakg(v)$.

\smallskip

Next, assume that $q_i>1$.
Then, by \cite[prop.~�3.8]{SV17a} 
the variety $\Lambda_{\nu}$ is irreducible and Lagrangian in $\R(v)$ for each $\nu$ and
the set of irreducible components of $\Lambda^1(v)$ is
\begin{align}\label{IrrLambda}\Irr(\Lambda^1(v))=\{\Lambda_{\nu}\,;\,\nu\vDash v\}.\end{align}

\smallskip

\subsection{Quiver varieties}

\subsubsection{} 
The symplectic vector space of representations of dimension vectors $v,$ $w$ of the \emph{framed quiver} 
associated with $\bar Q$ is
\begin{align*}
\R(v,w)=\R(v)\oplus\Hom_I(w,v)\oplus\Hom_I(v,w),
\end{align*}
where $\Hom_I(v,w)$ is the set of $I$-graded $\k$-linear maps $V\to W$ and
$$V=\k^v,\quad W=\k^w,\quad V_i=\k^{v_i},\quad W_i=\k^{w_i}.$$
The algebraic group $G(v)\times G(w)\times T$ acts on $\R(v,w)$.
The symplectic form is homogeneous of weight $\hbar$ under the $G(v)\times G(w)\times T$-action.
The $G(v)$-action is symplectic and admits the
moment map 
$$\mu:\R(v,w)\to\frakg(v),\quad(\bar x,\bar a)\mapsto[x,x^*]+aa^*$$
where we write 
\begin{align*}
&\bar x=(x,x^*)\in \R(v),\\
&\bar a=(a,a^*)\in\Hom_I(w,v)\oplus\Hom_I(v,w).
\end{align*}
Let $\frakM_0(v,w)$ be the categorical quotient by $G(v)$ of the zero set 
\begin{align*}\M(v,w)=\mu^{-1}(0).\end{align*} 
It is affine, reduced, irreducible, singular in general and $G(w)\times T$-equivariant. 
Given a character $\theta$ of $G(v)$ we consider the space of semi-invariants of weight $\theta$
$$\k[\M(v,w)]^{\,\theta}\subseteq\k[\M(v,w)].$$
We have the $G(w)\times T$-equivariant projective morphism
$$\pi_\theta : \frakM_\theta(v,w)=\text{Proj}\Big(\bigoplus_{n\in\bbN}\k[\M(v,w)]^{\,\theta^n}\Bigr) \to \frakM_0(v,w).$$
The Hilbert-Mumford criterion implies that 
$\frakM_\theta(v,w)$ is the geometric quotient by $G(v)$ of an open subset $\M_\theta(v,w)$ of $\M(v,w)$ 
consisting of the $\theta$-\emph{semistable} representations.
Replacing everywhere $\M(v,w)$ by $\R(v,w)$ we define an open subset $\R_\theta(v,w)$ of $\R(v,w)$ such that
$\M_\theta(v,w)=\R_\theta(v,w)\cap \M(v,w)$.
If $\theta$ is \emph{generic} then any semistable pair $(\bar x,\bar a)$ is stable and the variety $\frakM_{\theta}(v,w)$ is smooth, symplectic, of dimension 
\begin{equation*}
d_{v,w}=2\,v\cdot w-(v,v)_Q.
\end{equation*} 
The character $\theta=s$ given by $s(g)=\prod_{i\in I}\det(g_i)^{-1}$ is generic.
A representation $(\bar x,\bar a)$ in $\R(v,w)$ is ($s$-)semistable
if and only if it does not admit any nonzero subrepresentation
whose dimension vector belongs to $\bbN^I\times\{0\}$.
We abbreviate
\begin{align*}
\frakM(v,w)=\frakM_s(v,w),\quad \pi=\pi_s.
\end{align*}
Let $[\bar x,\bar a]$ be the image in $\frakM(v,w)$ of the tuple $(\bar x,\bar a)\in\M_s(v,w)$.

\smallskip

It may be useful to realize the 
quiver varieties as moduli spaces of representations of some preprojective algebra. 
More precisely, consider the quiver $\tilde{Q}=(\tilde{I}, \tilde{\Omega})$ obtained from $Q$ 
by adding one new vertex $\infty$ and $w_i$ arrows from $\infty$ to the vertex $i$ for all $i \in I$.
For each $v \in \mathbb{N}^I$ set 
$$\tilde{v}=v+\delta_\infty\ \text{if}\ w\neq 0,\ \text{and}\ \tilde v=v\ \text{else}.$$
If $w\neq 0$ we'll identify $G(v)$ with $PGL(\tilde{v})$. 
Then, the varieties $\frakM(v,w)$ or $\frakM_0(v,w)$ 
may be viewed as moduli spaces 
of (stable, semisimple)  modules of dimension $\tilde{v}$
over the preprojective algebra $\tilde\Pi$ of $\tilde Q$.

\smallskip

\subsubsection{} 
For any closed point $t$ of the center $\frakz(v)$ of $\frakg(v)$, the variety
\begin{align*}\M(v,w)_t=\mu^{-1}(t)\end{align*} 
gives rise to a projective morphism
\begin{align*}\pi : \frakM(v,w)_t \to \frakM_0(v,w)_t.\end{align*}
We'll call $\frakM(v,w)_t$ or $\frakM_0(v,w)_t$ a \emph{deformed} quiver variety.
Given a generic line $\bbA^1\subseteq\frakz(v),$ we may consider the families of varieties
\begin{align*}\frakM(v,w)_{\bbA^1}\to\bbA^1,\quad \frakM_0(v,w)_{\bbA^1}\to\bbA^1.\end{align*}
If $\frakM(v,w)$ is non empty, then $\frakM(v,w)_t$ and $\frakM_0(v,w)_t$ are also non empty for each $t\neq 0$ and we get a projective morphism $\pi : \frakM(v,w)_{\bbA^1} \to \frakM_0(v,w)_{\bbA^1}$
over $\bbA^1$ which is an isomorphism above ${\bbG_m}.$

\smallskip

\subsubsection{}
If the representation $z \in\M(v,w)$ is semisimple then we can decompose it into its simple constituents
$z=z_1^{\oplus d_1}\oplus z_2^{\oplus d_2}\oplus\cdots z_s^{\oplus d_s}$ where the $z_r$'s are non-isomorphic simples.
If $u_r=(v_r,w_r)$ is the dimension vector of $z_r$, i.e., if $z_r$ is a simple representation in $\M(v_r,w_r)$, 
then we say that $z$ has the 
\emph{representation type} 
$$\tau=(d_1,u_1\,;\,d_2,u_2\,;\,\dots\,;\,d_s,u_s).$$
If $w\neq 0$ and $z$ is stable,
then there is a unique  integer $r$ such that $w_r=w$ and $w_{r'}=0$ for all $r'\neq r$, hence
we may assume that the representation type of $z$ has the following form
\begin{align*}\tau=(1,v_1,w\,;\,d_2,v_2\,;\,\dots\,;\,d_s,v_s),\end{align*}
where $(v_1,v_2,\dots,v_s)\vDash v$ and the tuples
$(d_2,v_2),\dots(d_s,v_s)$ are only defined up to a permutation.
Let $RT(v,w)$ be the set of all representation types of dimension $(v,w)$. Let
$$\frakM_0(\tau)\subseteq\frakM_0(v,w)$$
be the set of semisimple representations with representation type equal to $\tau$.
Since $\frakM_0(v,w)$ is irreducible, there is a unique representation type $\kappa_{v,w}$ such that
$\frakM_0(\kappa_{v,w})$ is a dense open subset of $\frakM_0(v,w)$.
We'll call it the \emph{generic representation type} of $RT(v,w)$.
Given two representation types
\begin{align*}
\tau=(1,v_1,w\,;\,d_2,v_2\,;\,\dots\,;\,d_s,v_s)\in RT(v,w),\\
\kappa=(1,u_1,z\,;\,e_2,u_2\,;\,\dots\,;\,e_r,u_r)\in RT(u,z),
\end{align*}
we define their sum by
$$\tau\oplus\kappa=(1,v_1+u_1,w+z\,;\,d_2,v_2\,;\,\dots\,;\,d_s,v_s\,;\,e_2,u_2\,;\,\dots\,;\,e_r,u_r).$$
If $\sum_{t=1}^rv_t\leqslant v$ and 
$\sum_{t=1}^rw_t\leqslant w$,
the direct sum of quiver representations yields a closed embedding
$\oplus:\prod_{t=1}^r\frakM_0(v_t,w_t)\to\frakM_0(v,w)$
such that
$$\frakM_0(\tau)\oplus\frakM_0(\kappa)\subseteq\overline{\frakM_0(\tau\oplus\kappa)}.$$
For each representation type $\tau$ we write 
\begin{align*}
\M(\tau)&=\rho_0^{-1}(\frakM_0(\tau)),\\
\M_s(\tau)&=\M(\tau)\cap\M_s(v,w),\\
\frakM(\tau)&=\M_s(\tau)/\!\!/ G(v).
\end{align*}
If $\frakM(\tau)\neq\emptyset$ then the map $\pi$ restricts to a locally trivial fibration 
$\frakM(\tau)\to\frakM_0(\tau).$

\smallskip

\subsubsection{}\label{sec:BBQ}
The results of this section apply to arbitrary deformed quiver varieties.
To simplify the exposition we'll deal only with ordinary ones.
Let $\gamma$ be a cocharacter of $G(w)\times T$. Composing it with the $G(w)\times T$-action we get a
$\bbG_m$-action $\bullet$ on $\frakM(v,w)$. We want to describe 
the fixed point locus $\frakM(v,w)^\bullet$ and the Bialynicki-Birula \emph{attracting variety}
\begin{align*}\frakL(v,w)^\bullet=\{z\in\frakM(v,w)\,;\,\exists\lim_{t\to 0}t\bullet z\}.\end{align*}
To do this, fix a cocharacter $\rho$ of $G(v)$. 
Since the $G(v)$-action on $\M_s(v,w)$ commutes with the $G(w)\times T$-action, we can view the product $\gamma\rho$ as
a cocharacter of $G(v)\times G(w)\times T$.
Let $L$ be the centralizer of $\rho$ in $G(v)$ and set
\begin{align*}
\begin{split}
P&=\{g\in G(v)\,;\,\exists\lim_{t\to 0}\rho(t)\,g\,\rho(t)^{-1}\},\\
U&=\{g\in G(v)\,;\,\lim_{t\to 0}\rho(t)\,g\,\rho(t)^{-1}=1\},\\
\M[\rho]&=\{z\in\M_s(v,w)\,;\,\gamma\rho(t)\cdot z=z\,,\,\forall t\},\\
\L[\rho]&=\{z\in\M_s(v,w)\,;\,\lim_{t\to 0}\gamma\rho(t)\cdot z\in\M[\rho]\}.
\end{split}
\end{align*}
Then, we have
\begin{align*}
\begin{split}
\frakM[\rho]&:=\big(G(v)\cdot\M[\rho]\big)\,/\!\!/\,G(v)=\M[\rho]\,/\!\!/\,L,\\
\frakL[\rho]&:=\big(G(v)\cdot\L[\rho]\big)\,/\!\!/\,G(v)=\L[\rho]\,/\!\!/\,P.
\end{split}
\end{align*}
The set
$\frakM[\rho]$ is a sum of connected components of $\frakM(v,w)^\bullet$.
We have
$$\frakL[\rho]=\{z\in\frakM(v,w)\,;\,\lim_{t\to 0}t\bullet z\in\frakM[\rho]\}$$ and
$$\frakM(v,w)^\bullet=\bigsqcup_\rho\frakM[\rho],\quad\frakL(v,w)^\bullet=\bigsqcup_\rho\frakL[\rho].$$

\smallskip

\subsubsection{}
Set
\begin{align*}
\L_s^1(v,w)=\R_s(v,w)\cap\big(\Lambda^1(v)\times\{0\}\times\Hom_I(v,w)\big).
\end{align*}
The \emph{Lagrangian quiver variety} is the geometric quotient
\begin{align*}\frakL^1(v,w)=\L_s^1(v,w)/\!\!/G(v).\end{align*}
We have closed embeddings
\begin{align*}
\L_s^1(v,w)  \subset \M_s(v,w),\quad
\frakL^1(v,w)\subset \frakM(v,w).
\end{align*}
 The following is proved in \cite[prop. 3.1, 3.2]{BSV}.

\smallskip

\begin{proposition}\label{prop:TL}\hfill
\begin{itemize}[leftmargin=8mm]
\item[$\mathrm{(a)}$] 
There is a $\bbG_m$-action $\bullet$ on $\frakM(v,w)$ such that $\frakL^1(v,w)=\frakL(v,w)^\bullet$.
\item[$\mathrm{(c)}$] 
$\frakL^1(v,w)$ is a closed Lagrangian subvariety of $\frakM(v,w)$.
\end{itemize}
\qed
\end{proposition}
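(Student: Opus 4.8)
The plan is to prove both statements by relating $\frakL^1(v,w)$ to the Bialynicki--Birula attracting set for a well-chosen $\bbG_m$-action, and then invoking the standard fact that attracting sets of one-parameter subgroups in a symplectic variety with a Hamiltonian torus action are isotropic. More precisely, for part (a) I would first exhibit the cocharacter. Recall from Section~\ref{sec:sn} that $\Lambda^1(v)$ is cut out by the semi-nilpotency conditions $x(W_p)\subseteq W_{p-1}$, $x^*(W_p)\subseteq W_p$ along restricted flags. The key observation is that the extra scaling direction $T_\dil$ (which exists by our standing assumption $T_\dil\neq\{1\}$) together with a suitable cocharacter $\gamma$ of $G(w)\times T$ produces, via the construction of \S\ref{sec:BBQ}, an attracting variety whose underlying locus in $\M_s(v,w)$ is exactly $\L^1_s(v,w)=\R_s(v,w)\cap(\Lambda^1(v)\times\{0\}\times\Hom_I(v,w))$. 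The condition $a^*=0$ (i.e. landing in $\{0\}\times\Hom_I(v,w)$) and the semi-nilpotency of $x$ are precisely the conditions under which $\lim_{t\to 0}t\bullet z$ exists, while the surviving $x^*$ and $a$ directions are fixed or attracting; the $x^*$-invariance under the flag ($x^*(W_p)\subseteq W_p$) is what allows $x^*$ to be non-zero in the limit. Passing to the geometric quotient by $G(v)$ and using the description $\frakL[\rho]=\{z\in\frakM(v,w)\,;\,\lim_{t\to 0}t\bullet z\in\frakM[\rho]\}$ from \S\ref{sec:BBQ} then identifies $\frakL^1(v,w)$ with $\frakL(v,w)^\bullet=\bigsqcup_\rho\frakL[\rho]$. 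This part is essentially bookkeeping, matching the semi-nilpotency flag combinatorics against the cocharacter weight decomposition.

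For part (c), once (a) is established, I would argue as follows. The variety $\frakM(v,w)$ is smooth and symplectic, and the $\bbG_m$-action $\bullet$ of part (a) can be arranged to be Hamiltonian (it comes from a cocharacter of $G(w)\times T$ acting on a symplectic Nakajima variety, with moment map the corresponding component of the $G(w)\times T$-moment map). For a Hamiltonian $\bbG_m$-action on a smooth symplectic variety, each attracting cell $\frakL[\rho]$ fibers over the fixed locus $\frakM[\rho]$ with affine-space fibers given by the positive-weight part of the tangent space, and a standard computation shows the symplectic form vanishes on such an attracting cell relative to an isotropic fixed component. By Proposition~\ref{prop:TL}(a) of~\cite{BSV} — or rather the statement being proved — we need $\frakM[\rho]$ itself to be isotropic (in fact Lagrangian in its connected components is automatic when the fixed locus has the expected dimension, but isotropy is what we need), which follows because $\frakM[\rho]$ is again a union of (products of) Nakajima quiver varieties sitting Lagrangian-ly inside, or more directly because the normal weights pair the positive and negative eigenspaces symplectically. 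Combining isotropy of $\frakM[\rho]$ with the isotropy of each attracting fiber yields that $\frakL[\rho]$ is isotropic, hence so is the finite union $\frakL^1(v,w)$. That it is \emph{Lagrangian} (top-dimensional isotropic) rather than merely isotropic then follows from the dimension count: $\dim\frakL^1(v,w)=\dim\L^1_s(v,w)-\dim G(v)$, and Theorem~\ref{thm:1.1}(a) gives $\dim\Lambda^1(v)=d_v=v\cdot v-(v,v)/2$, so $\dim\L^1_s(v,w)=d_v+v\cdot w$ (the $\Hom_I(v,w)$ factor contributes $v\cdot w$, after imposing semistability which is open hence dimension-preserving), whence $\dim\frakL^1(v,w)=d_v+v\cdot w-v\cdot v=v\cdot w-(v,v)/2=\tfrac12 d_{v,w}$, exactly half of $\dim\frakM(v,w)$.

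The main obstacle, I expect, will be the dimension count and the verification that $\L^1_s(v,w)$ has the expected dimension $d_v+v\cdot w$ — equivalently, that imposing the stability condition and quotienting by $G(v)$ behaves as in the unframed case. One subtlety is that $\Lambda^1(v)$ need not be irreducible (see~\eqref{IrrLambda}), so one must check each irreducible component $\Lambda_\nu$ contributes a component of $\frakL^1(v,w)$ of the correct dimension after framing and GIT quotient; this requires knowing that $\L^1_s(v,w)$ is nonempty over each component, which is where genericity of the stability parameter $s$ and the description of semistable points (no nonzero subrepresentation supported on $\bbN^I\times\{0\}$) enter. A second, more technical point is checking that the $\bbG_m$-action in (a) is genuinely Hamiltonian and that the attracting set as a \emph{scheme} (reduced, per our conventions) coincides with $\frakL^1(v,w)$ and not just set-theoretically; this is handled by the smoothness of $\frakM(v,w)$ (making the Bialynicki--Birula decomposition into locally closed smooth pieces) together with the local structure of $\Lambda^1(v)$ near its generic points. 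Since the statement is quoted as already proved in~\cite[prop.~3.1,~3.2]{BSV}, the write-up here would mainly consist of recalling the cocharacter, citing the Bialynicki--Birula isotropy lemma, and recording the dimension bookkeeping above.
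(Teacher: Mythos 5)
The paper offers no proof of this proposition; it simply cites \cite[prop.~3.1, 3.2]{BSV}, so there is nothing to compare against directly. Judged on its own merits, your sketch has the right shape but contains two genuine gaps worth flagging.

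First, a small but consequential slip: the definition $\L^1_s(v,w)=\R_s(v,w)\cap\big(\Lambda^1(v)\times\{0\}\times\Hom_I(v,w)\big)$ kills the $\Hom_I(w,v)$-component, i.e.\ it imposes $a=0$, not $a^*=0$ as you write. Since the whole point of part~(a) is to match the vanishing of $a$ and the semi-nilpotency of $x$ to a weight decomposition of a cocharacter $\gamma$ of $G(w)\times T$, getting the dual confused will produce a cocharacter with the wrong sign on the framing factor. Relatedly, you defer exhibiting $\gamma$ entirely (``essentially bookkeeping''), but this is the real content of~(a): the limit in \eqref{atL} is taken in the geometric quotient $\frakM(v,w)$, not in the linear space $\M_s(v,w)$, so the semi-nilpotency of $x$ arises only after allowing $\gamma(t)$ to be corrected by a cocharacter $\rho$ of $G(v)$ as in \S\ref{sec:BBQ}. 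Saying that ``the semi-nilpotency of $x$ is precisely the condition under which the limit exists'' without specifying $\gamma$, and without explaining the $\rho$-correction, leaves the heart of~(a) unproved.

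Second, the isotropy step in~(c) relies on the $\bbG_m$-action being Hamiltonian, but the action you yourself propose involves the dilation factor $T_\dil$ (you point to the standing hypothesis $T_\dil\neq\{1\}$ as what makes the cocharacter available). The symplectic form on $\frakM(v,w)$ has $T$-weight $\hbar$, so a cocharacter through $T_\dil$ scales $\omega$; the action is \emph{not} Hamiltonian. For such an action the attracting set of a Lagrangian fixed locus need not be isotropic: on $M=\bbC^2$ with $\omega=dp\wedge dq$ and $t\cdot(p,q)=(tp,q)$, the form has weight $1$, the fixed locus $\{p=0\}$ is Lagrangian, yet the attracting set is all of $M$. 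The obstruction in general is that the weight-$0$ and weight-$1$ tangent directions pair nontrivially when $\omega$ has weight $1$, and both lie in the attracting cone. So your BB-isotropy argument, as stated, does not go through. There is, however, a cleaner route to~(c) that sidesteps the $\bbG_m$-action altogether and that you could use instead: $\Lambda^1(v)\times\{0\}\times\Hom_I(v,w)$ is a product of Lagrangians inside $\R(v,w)=\R(v)\oplus\Hom_I(w,v)\oplus\Hom_I(v,w)$, hence Lagrangian; since $a=0$ forces $aa^*=0$, the set $\L^1_s(v,w)$ automatically lies in $\mu^{-1}(0)$ and is open in that Lagrangian, $G(v)$-invariant, and $s$-(semi)stable. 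A $G(v)$-invariant Lagrangian contained in $\mu^{-1}(0)$ descends to a Lagrangian of the Hamiltonian reduction, giving~(c) directly; your dimension bookkeeping then serves as a consistency check rather than carrying the proof.

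Finally, note that the proof of~(a) also has to produce the attracting set as a \emph{reduced closed} subscheme coinciding with $\frakL^1(v,w)$, and you correctly flag this as a subtlety; properness of $\pi$ (so that $\frakL(v,w)^\bullet=\pi^{-1}(\frakL_0(v,w)^\bullet)$) is the tool that makes the attracting set closed, and is worth invoking explicitly.
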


\smallskip

\subsection{Homology of quiver varieties}\hfill\\

Set $\Bbbk=H^*_{T}$ and $ \Bbbk[w]=H^*_{G(w)\times T}.$
Let $K$ be the fraction field of $\Bbbk$.
We'll abbreviate
$$\otimes=\otimes_\Bbbk,\quad\Hom=\Hom_\Bbbk,\quad (\bullet)^\vee=\Hom(\bullet,\Bbbk).$$
We may view $\hbar$ as a non zero element of $\Bbbk$ of degree 2.
Let $F_{w}$ be the $\bbZ^I\times \bbZ$-graded $\Bbbk[w]$-module given by
$$F_w=\bigoplus_{v\in\bbN^I}F_w(v),\quad F_w(v)=\bigoplus_{k\in\bbZ}F_w(v,k),\quad
F_w(v,k)=H_{k+2d_{v,w}}^{G(w)\times T}\big(\frakM(v,w)\big).$$
Let
$|w\rangle$ be the fundamental class of $\frakM(0,w)$ in $F_w.$
Let $A_w$ be the $\bbZ^I\times \bbZ$-graded $\Bbbk[w]$-algebra 
given by 
\begin{align*}
A_w&=\bigoplus_{v_2\in\bbZ^I} A_{w}(v_2)
=\bigoplus_{v_2\in\bbZ^I}\bigoplus_{k_2\in\bbZ} A_{w}(v_2,k_2)\subset\End_{\Bbbk[w]}(F_w),
\end{align*}
where $A_{w}(v_2,k_2)$ consists of all $\Bbbk[w]$-linear endomorphisms of $F_w$ which are homogeneous of degree 
$(v_2,k_2)$. By \cite{N01}, the $\Bbbk[w]$-module $F_w(v)$ is free of finite rank. 
Write $$F_w^\vee=\Hom_{\Bbbk[w]}(F_w\,,\,\Bbbk[w])
=\prod_vF_w(v)^\vee.$$
Then, we have
\begin{align*}
A_w=\prod_v\bigoplus_{u}F_w(u)\otimes_{\Bbbk[w]}F_w(v)^\vee
\end{align*}
Let $A_w^f\subset A_w$ be the set of finite rank endomorphisms of $F_w$, which is equal to
\begin{align*}
A_w^f=F_{w}\otimes_{\Bbbk[w]} F_{w}^\vee.
\end{align*}
Under the convolution, any cycle in $H_*^{G(w)\times T}\big(\frakM(u,w)\times\frakM(v,w)\big)$
which is proper over $\frakM(u,w)$ gives rise to a $\Bbbk[w]$-linear operator
$F_w(v)\to F_w(u),$ see \cite{CG} for details.

\medskip

\section{Definition of $\bbY$ and $\bfY^1$}\label{sec:def}

The aim of this section is to review the constructions of Maulik and Okounkov \cite{MO} and \cite{SV17a}.
The sections \ref{sec:stable}, \ref{sec:res} and \ref{sec:rmatrix} are a reminder from
 \cite{MO} to which we refer for more details. Section~\ref{sec:Y1} is a reminder of \cite{SV17a}.

\smallskip

\subsection{The stable envelope}\label{sec:stable}

\subsubsection{} 
Fix dimension vectors $v$, $w$, $w_1$, $w_2$ such that $w=w_1+w_2$.
We'll abbreviate 
\begin{align}\label{50}
\G_\sp=G(w_1)\times G(w_2)\times T_\sp,\quad
\G=\G_\sp\times T_\dil,\quad A=\gamma(\bbG_m)
\end{align}
where $\gamma$ is the cocharacter of $G(w_1)\times G(w_2)$ given by
\begin{align}\label{gamma2}
\gamma(t)=1_{w_1}\oplus t^{-1}1_{w_2}.
\end{align}
The group $\G$ acts on the varieties $\frakM(v,w)$ and $\frakM_0(v,w)$.
Let $\diamond$ be the $\bbG_m$-action on $\frakM(v,w)$ associated with the cocharacter $\gamma$.
Let $\frakM(v,w)^\diamond$ be the fixed points locus and
$\frakL(v,w)^\diamond$ be the \emph{attracting set}
\begin{align}\label{atL}\frakL(v,w)^\diamond=\{x\in \frakM(v,w)\,;\,\exists\lim_{t\to 0}t\diamond x\}.
\end{align}
Let $\frakM_0(v,w)^\diamond$ and $\frakL_0(v,w)^\diamond$ be the fixed points locus and the attracting set in $\frakM_0(v,w)$.
As $\pi$ is proper, we have $\frakL(v,w)^\diamond=\pi^{-1}(\frakL_0(v,w)^\diamond).$
Hence, there is a composed homomorphism
$$\frakL(v,w)^\diamond\to\frakL_0(v,w)^\diamond\to\frakM_0(v,w)^\diamond.$$
Consider the (reduced) fiber product given by
$$\frakZ(v,w)=\frakL(v,w)^\diamond\times_{\frakM_0(v,w)^\diamond}\frakM(v,w)^\diamond.$$
Since the $\diamond$-action preserves the symplectic form of $\frakM(v,w)$, 
the fixed point locus  $\frakM(v,w)^\diamond$ admits also a natural symplectic form.
Then $\frakZ(v,w)$ is a Lagrangian closed subvariety of the symplectic manifold 
$\frakM(v,w)^{\op}\times\frakM(v,w)^\diamond$.
The \emph{stable envelope} is the $H^*_\G$-linear map
$$H^\G_*(\frakM(v,w)^\diamond)\to H^{\G}_*(\frakM(v,w))$$
given by the convolution with a $\G$-equivariant Lagrangian cycle 
$$\stab\in H_*^\G(\frakM(v,w)^\op\times\frakM(v,w)^\diamond)$$
which is supported on $\frakZ(v,w)$ and proper over $\frakM(v,w)$. 

\smallskip

\subsubsection{} 
Let $\Irr(\frakM(v,w)^\diamond)$ be the set of connected components of $\frakM(v,w)^\diamond$.
For each $\calX\in\Irr(\frakM(v,w)^\diamond)$, the restriction of the $\G_\sp$-equivariant Euler class of the normal 
bundle $N_{\calX/\frakM}$
to any point of $\calX$ is
$(-1)^{\codim_\frakM\calX/2}$ times a square in $H^*_{\G_\sp}$.
A choice of a square root 
$$\varepsilon_\calX\in H^{\codim_\frakM\calX}_{\G_\sp}$$
of this square for each component $\calX$ is called a \emph{polarization}.
Since $\frakM(v,w)$ is a quiver variety, it is the Hamiltonian reduction of the cotangent bundle to an $\G_\sp$-manifold.
Therefore, it admits a canonical polarization, and any other polarization differs from it by a collection of signs.
In other words, we'll view $\varepsilon_\calX$ either as a class in $H^*_{\G_\sp}$ 
or as a sign according to the context.

\smallskip

\subsubsection{} 
Define the varieties
$\frakM(v,w)^\diamond_{\bbA^1},$ $\frakL(v,w)^\diamond_{\bbA^1}$ and $\frakZ(v,w)_{\bbA^1}$
as above, by replacing 
$\frakM(v,w)$, $\frakM_0(v,w)$ by $\frakM(v,w)_{\bbA^1}$, $\frakM_0(v,w)_{\bbA^1}$ at each step.
There is a (non continuous) map 
$$\nu:\frakL(v,w)^\diamond_{\bbA^1}\to\frakM(v,w)^\diamond_{\bbA^1}$$ 
given by the limit $t\to 0$ in \eqref{atL}.
Since
$\frakM(v,w)_{\bbG_m}$ and $\frakM_0(v,w)_{\bbG_m}$ are non empty isomorphic affine varieties,
we have an isomorphism $\frakM_0(v,w)_{\bbG_m}^\diamond\simeq\frakM(v,w)^\diamond_{\bbG_m}$.
Hence, for each component $\calX\in\Irr(\frakM(v,w)^\diamond_{\bbG_m})$ the set
$\nu^{-1}(\calX)$ is closed in $\frakM(v,w)_{\bbG_m}$ and $\frakZ(v,w)_{\bbG_m}$ is isomorphic to
the closed subvariety of $\frakM(v,w)_{\bbG_m}$ given by
$$\nu^{-1}(\frakM(v,w)^\diamond_{\bbG_m})=\bigsqcup_\calX \nu^{-1}(\calX).$$
We consider the $\G$-invariant cycle 
in  $\frakZ(v,w)_{\bbG_m}$ given by
$$\stab_{\bbG_m}=\sum_\calX\varepsilon_\calX\,[\,\nu^{-1}(\calX)\times_\calX\calX\,]. $$
The inclusion $\{0\}\subset\bbA^1$ gives rise to a specialization map to
$H_*^\G(\frakM(v,w)^\op\times\frakM(v,w)^\diamond)$ as in \cite[\S 2.6.30]{CG}, 
\cite[\S 11.1]{F98}. Let us denote it by the symbol ${\displaystyle\lim_0}$. It yields a Lagrangian cycle
\begin{align}\label{stab0}
\stab=\lim_{0}\,\stab_{\bbG_m}.
\end{align}

\smallskip

\subsubsection{} 
Given dimension vectors $v_1$, $v_2$ such that $v=v_1+v_2$, 
let $\frakM[v_1,v_2]$ and $\frakL[v_1,v_2]$ be the subsets of 
$\frakM(v,w)^\diamond$ and $\frakL(v,w)^\diamond$ attached to
the cocharacter $\rho$ of $G(v)$ given by 
\begin{align}\label{rho2}\rho(t)=1_{v_1}\oplus t^{-1}1_{v_2}.\end{align}
We have 
\begin{align}\label{prod}\begin{split}
\frakM(v,w)^\diamond&=\bigsqcup_{v_1+v_2=v}\frakM[v_1,v_2],\quad 
\frakM[v_1,v_2]=\frakM(v_1,w_1)\times\frakM(v_2,w_2),\\
\frakL(v,w)^\diamond&=\bigsqcup_{v_1+v_2=v}\frakL[v_1,v_2],\quad\frakL[v_1,v_2]=\nu^{-1}(\frakM[v_1,v_2]),\\
\frakZ(v,w)&=\bigsqcup_{v_1+v_2=v}\frakZ[v_1,v_2],\quad 
\frakZ[v_1,v_2]=\frakZ(v,w)\cap \big(\,\frakM(v,w)\times \frakM[v_1,v_2]\,\big).
\end{split}
\end{align}
Note that $\frakM_0(v,w)^\diamond$ is not the disjoint union of the varieties
$\frakM_0(v_1,w_1)\times\frakM_0(v_2,w_2).$
Let $|v|$ denote the sum of the entries of the dimension vector $v$. We have
\begin{align*}\frakM[v_1,v_2]\cap\overline{\frakL[u_1,u_2]}\neq\emptyset\iff 
\big(|v_2|\leqslant |u_2|\ \text{and}\ v_1+v_2=u_1+u_2\big).\end{align*}
If this holds we may also write
\begin{align}\label{order}\frakM[v_1,v_2]\leqslant \frakM[u_1,u_2].\end{align}
Given  $u_1$, $u_2$ with $v=u_1+u_2$  we abbreviate 
\begin{align*}
\frakM[u_1,u_2\,;\,v_1,v_2]&=\frakM[u_1,u_2]\times\frakM[v_1,v_2].
\end{align*}
The varieties above admit deformed version 
by replacing $\frakM(v,w)$ by $\frakM(v,w)_{\bbA^1}$ at each step.
We'll denote them by $\frakZ[v_1,v_2]_{\bbA^1}$, $\frakL[v_1,v_2]_{\bbA^1}$, etc.
Consider the equivariant Lagrangian cycle
\begin{align}\label{stab00}
\stab[v_1,v_2]=\varepsilon_{v_1,v_2}\,\lim_{0}\,[\,\frakZ[v_1,v_2]_{\bbG_m}\,]\in 
H_*^\G\big(\frakM(v,w)\times\frakM[v_1,v_2]\big),
\end{align}
where $\varepsilon_{v_1,v_2}=\varepsilon_{\frakM[v_1,v_2]}$.
To avoid any confusion we may write 
\begin{align*}
\frakZ[v_1,v_2\,;\,w_1,w_2]&=\frakZ[v_1,v_2],\\
\frakL[v_1,v_2\,;\,w_1,w_2]&=\frakL[v_1,v_2],\\
\stab[v_1,v_2\,;\,w_1,w_2]&=\stab [v_1,v_2].
\end{align*}

\smallskip

\subsection{Residues}\label{sec:res}

\subsubsection{} 
Since the group $A$ acts trivially on the fixed points locus 
$\frakM(v,w)^\diamond$, there is a canonical increasing filtration
on the space $H_k^\G(\frakM(v,w)^{\diamond}\times\frakM(v,w)^\diamond)$ whose $d$-th term is
$$\bigoplus_{\ell\leqslant d}H_A^{\leqslant\ell}\cap H_{k+\ell}^{G/A}(\frakM(v,w)^{\diamond}\times\frakM(v,w)^\diamond).$$
The associated graded is a graded $H^*_A$-module whose degree $d$ term is equal to
$$H_A^{d}\otimes H_{k+d}^{G/A}(\frakM(v,w)^{\diamond}\times\frakM(v,w)^\diamond).$$
There is a canonical $\bbZ$-graded ring isomorphism
$$\Bbbk[\varpi]=H^*_{A\times T},\quad \deg(\varpi)=2$$
where $\varpi$ is identified with
the first Chern class of the linear character of $A$.
The inclusion 
$$i:\frakM(v,w)^\diamond\times\frakM(v,w)^\diamond\subset\frakM(v,w)\times\frakM(v,w)^\diamond$$
is an l.c.i.~morphism. Hence the pullback $i^*$ in equivariant Borel-Moore homology is well defined.
Given connected components $\calX,$ $ \calY$ of $\frakM(v,w)^\diamond$ such that $\calX<\calY$, we denote by 
$\bullet|_{\calX\times\calY}$ the restriction to $\calX\times\calY$.
By \cite[prop.~4.8.2]{MO}, 
there is a unique $\G/A$-equivariant Lagrangian cycle 
$\bfr_{\calX\times\calY}$ on $\calX^{\op}\times\calY$ 
such that
\begin{align}\label{res}
\stab|_{\calX\times\calY}=\hbar \varpi^{\codim_\frakM\calX/2-1}\cap\bfr_{\calX\times\calY}\ \text{mod}\ 
H_A^{<\,\codim_\frakM\calX-2}\cap H_*^{\G/A}(\calX\times\calY).
\end{align}
We'll call it the \emph{residue} of the cycle $\stab$.
It is supported on the closed subset
$$(\calX\times\calY)\cap(\frakM(v,w)^{\diamond}\times_{\frakM_0(v,w)}\frakM(v,w)^\diamond).$$
In particular, it is proper over $\calX$. 
We'll also write $\bfr_{\calX\times\calY}$ for its image in $H_*^{\G_\sp/A}(\calX\times\calY).$

\smallskip

\subsubsection{} 
By \cite[lem.~3.4.2]{MO}, for any $\G_\sp$-invariant Lagrangian cycle 
$C$ in $\frakM(v,w)^\op\times\frakM(v,w)^\diamond$ 
there is a unique $\G_\sp/A$-equivariant Lagrangian cycle 
$\res(C)$ in $\frakM(v,w)^{\diamond,\op}\times\frakM(v,w)^\diamond,$ 
called the \emph{Lagrangian residue} of $C$, such that
\begin{itemize}
\item
$\res(C)$ is supported on $C\cap(\frakM(v,w)^\diamond\times\frakM(v,w)^\diamond)$,
\item 
$C|_{\calX\times\calY}=\varepsilon_{\calX}\cap\res(C)|_{\calX\times\calY}\ \text{modulo}\ 
H_A^{<\,\codim_\frakM\calX/2}\otimes H_*^{\G_\sp/A}(\calX\times\calY),
$
\end{itemize}
where $\calX, \calY\in\Irr(\frakM(v,w)^\diamond)$
and $\varepsilon_{\calX}\in H^*_{\G_\sp}$ is the polarization.
The residue above is not the same as the Lagrangian residue of stab.
Indeed, the characterization of the stable envelope in \cite[thm.~3.3.4]{MO} implies that the off-diagonal terms of the
Lagrangian residue of $\stab$ are zero.

\smallskip

\subsection{Hecke correspondences and residues}\hfill\\
\label{sec:HSE1}

Let $v_1$, $v_2$, $v$, $w_1$, $w_2$, $\gamma$, $\rho$ be as above.
Fix an $I$-graded subspace $V_1\subseteq V$ with dimension vector $v_1$. 
Let $P$ be the corresponding standard parabolic subgroup of $G(v)$ and
$L=G(v_1)\times G(v_2)$ be the standard Levi subgroup of $P$.
Fix $I$-graded linear isomorphisms $\k^{v_1} \simeq V_1$ and $\k^{v_2} \simeq V/ V_1$.

\subsubsection{} 
The geometric quotient
\begin{align}\label{H}
\begin{split}
\frakh[v_1,v_2\,;\,w]&=\H[v_1,v_2\,;\,w]/\!\!/P,\\
\H[v_1,v_2\,;\,w]&=\{z\in\M_s(v,w)\,;\,z(V_1\oplus W)\subseteq V_1\oplus W\}
\end{split}
\end{align}
is called a \emph{Hecke correspondence}. 
Given a locally closed subset 
$\SS\subseteq \M(v_2)$ we set
\begin{align*} 
\begin{split}
\frakh[v_1,\SS\,;\,w]&=\H[v_1,\SS\,;\,w]/\!\!/P,\\
\H[v_1,\SS\,;\,w]&=\{(\bar x,\bar a)\in\H[v_1,v_2\,;\,w]\,;\,\bar x_2\in \SS\},
\end{split}
\end{align*}
where $\bar x_2$ is the representation of $\Pi$ on $V/V_1$ equal to $z$ modulo $V_1\oplus W$.
If no confusion is possible we abbreviate
\begin{align*}
\H[v_1,\SS]&=\H[v_1,\SS\,;\,w],\quad
\frakh[v_1,\SS]=\frakh[v_1,\SS\,;\,w].
\end{align*}
By \cite[lem.~3.14, prop.~�3.15]{SV17a}, the Hecke correspondence
$\frakh[v_1,\SS\,;\,w]$ is an isotropic subvariety of $\frakM(v,w)^{\op}\times\frakM(v_1,w)$
if $\SS$ is any isotropic subvariety of the symplectic vector space $\R(v_2)$ and
it
is a closed Lagrangian
local complete intersection if $\SS=\Lambda_{(v_2)}.$

\smallskip

\subsubsection{}
By \cite[prop.~3.13]{SV17a}, the assignment $z\mapsto(z,z|_{V_1\oplus W})$ is a closed embedding
\begin{align}\label{hecke}\frakh[v_1,v_2\,;\,w]\subseteq\frakM(v,w)\times\frakM(v_1,w)\end{align}
which is projective over $\frakM(v,w)$.
Consider the composed map
\begin{align}\label{map}\frakh[v_1,v_2\,;\,w_1]\to\frakM_0(v_2,0)\subset\frakM_0(v_2,w_2).\end{align} 
We define the  \emph{generalized Hecke correspondence} to be the (reduced) fiber product
$$\frakeh[v_1,v_2\,;\,w_1,w_2]=\frakh[v_1,v_2\,;\,w_1]\times_{\frakM_0(v_2,w_2)}\frakM(v_2,w_2).$$
By \eqref{hecke}, there is a closed embedding
$$\frakeh[v_1,v_2\,;\,w_1,w_2]\subseteq \frakM[v,0]\times_{\frakM_0(v,w)}\frakM[v_1,v_2].$$ 
There is also an isomorphism
\begin{align}\label{GH}
\frakeh[v_1,v_2\,;\,w_1,w_2]&=\GH[v_1,v_2]/\!\!/P\times L,
\end{align}
where $\GH[v_1,v_2]$ is the set of triples 
\begin{align}\label{triple}
(z,z'_1,z'_2)\in 
\H[v_1,v_2\,;\,w_1]\times\M_s(v_1,w_1)\times \M_s(v_2,w_2)
\end{align} 
satisfying the following conditions
\begin{itemize}
\item $z|_{V_1\oplus W_1}\simeq z'_1$ as $\tilde\Pi$-modules, 
\item $a'_2=0$,
\item 
$\pi(\bar x_2)\simeq\pi(\bar x'_2)$ as $\Pi$-modules.
\end{itemize}
Here, we write
$z=(\bar x,\bar a)$, $z'_2=(\bar x'_2,\bar a'_2)$ and
$\bar x_2=\bar x\ \text{mod}\  V_1$ in $\M(v_2)$.
Further, the symbol $\pi$ denotes the semisimplification of $\Pi$-modules.

\smallskip

More generally, for each locally closed subset $\SS\subset\M(v_2)$ and for each variety $\T$ over $\frakM_0(v_2,w_2)$,
we consider the (reduced) fiber product 
$$\frakeh[v_1,\SS\,;\,w_1,\T]=\frakh[v_1,\SS\,;\,w_1]\times_{\frakM_0(v_2,\delta_i)}\T.$$
We may abbreviate
\begin{align*}
\frakeh[v_1,v_2\,;\,w_1,\T]&=\frakeh[v_1,\M(v_2)\,;\,w_1,\T],\\
\frakeh[v_1,\SS\,;\,w_1,w_2]&=\frakeh[v_1,\SS\,;\,w_1,\frakM(v_2,w_2)],\\
\frakeh[v_1,v_2]&=\frakeh[v_1,v_2\,;\,w_1,w_2],\\
&=\frakeh[v_1,\M(v_2)\,;\,w_1,w_2].
\end{align*}

\smallskip

\subsubsection{} 
We'll also need the following geometric quotient
\begin{align}\label{P}
\begin{split}
\frakp[v_1,v_2]&=\P[v_1,v_2]/\!\!/P,\\
\P[v_1,v_2]&=\{z\in\M_s(v,w)\,;
\,z(V_1\oplus W_1)\subseteq V_1\oplus W_1\}.
\end{split}
\end{align}
From the proof of \cite[prop.~3.13]{SV17a}, we deduce that the assignment $z\mapsto(z,z|_{V_1\oplus W_1})$ is a 
closed embedding
\begin{align}\label{emb}
\frakp[v_1,v_2]\subseteq\frakM(v,w)\times\frakM(v_1,w_1).
\end{align}
There is also an obvious map
\begin{align*}\frakp[v_1,v_2]\to\frakM_0(v_2,w_2)
\end{align*}
which yields the (reduced) fiber product
\begin{align*}
\frakep[v_1,v_2]
&=\frakp[v_1,v_2]\times_{\frakM_0(v_2,w_2)}\frakM(v_2,w_2).
\end{align*}
Using \eqref{emb} we get a closed embedding
\begin{align}\label{emb-gp}\frakep[v_1,v_2]
\subseteq\frakM(v,w)\times\frakM[v_1,v_2].
\end{align}
We also have an isomorphism
\begin{align}\label{GP}
\frakep[v_1,v_2]&=\GP[v_1,v_2]/\!\!/P\times L,
\end{align}
where $\GP[v_1,v_2]$ is the set of triples
$$(z,z'_1,z'_2)\in 
\P[v_1,v_2]\times\M_s(v_1,w_1)\times \M_s(v_2,w_2)$$ 
satisfying the following conditions
\begin{itemize}
\item $z|_{V_1\oplus W_1}\simeq z'_1$ as a $\tilde\Pi$-module, 
\item $\pi(z\ \text{mod}\  V_1\oplus W_1)\simeq\pi(z'_2)$ as $\Pi$-modules on $V\oplus W/V_1\oplus W_1$.
\end{itemize}
From \eqref{H}, \eqref{GH}, \eqref{P} and \eqref{GP} we deduce that
\begin{align}\label{hp}
\begin{split}
\frakh[v_1,v_2\,;\,w_1]&=\frakp[v_1,v_2]\cap\big(\frakM[v,0]\times\frakM(v_1,w_1)\big),\\
\frakeh[v_1,v_2]&=\frakep[v_1,v_2]\,\cap\,\frakM[v,0\,;\,v_1,v_2].
\end{split}
\end{align}

\smallskip

\subsubsection{} 
We can now consider the residue of the stable envelope.
Given dimension vectors $u_1$, $u_2$ with $v=u_1+u_2$  and $|u_2|< |v_2|$, we abbreviate
\begin{align}\label{stab00}
\begin{split}
\bfr[u_1,u_2\,;\,v_1,v_2]&=\bfr_{\,\frakM[u_1,u_2\,;\,v_1,v_2]}.
\end{split}
\end{align}
To avoid any confusion we may also write 
\begin{align*}
\bfr[u_1,u_2\,;\,v_1,v_2\,;\,w_1,w_2]=\bfr [u_1,u_2\,;\,v_1,v_2].
\end{align*}
For a future use, let us focus on the case $u_1=v$ and $u_2=0$. Then, we have
$$\bfr[v,0\,;\,v_1,v_2]\in H_{\mid}^{\G_\sp/A}\big(\frakM[v,0\,;\,v_1,v_2]\big),$$
where the degree $\mid$ is given by
\begin{align}\label{e}\mid=d_{v,w_1}+d_{v_1,w_1}+d_{v_2,w_2}.\end{align}
It is an equivariant Lagrangian cycle of the symplectic manifold
$$\frakM[v,0\,;\,v_1,v_2]=\frakM(v,w_1)^\op\times\frakM(v_1,w_1)\times\frakM(v_2,w_2).$$

\smallskip

\begin{proposition}\label{prop:D1}\hfill
\begin{itemize}[leftmargin=8mm]
\item[$\mathrm{(a)}$] 
$\bfr[v,0\,;\,v_1,v_2]$  is supported on the closed subset $\frakeh[v_1,v_2]$ of $\frakM[v,0\,;\,v_1,v_2]$.
\item[$\mathrm{(b)}$] 
$\stab[v_1,v_2]$  is supported on the closed subset 
$\frakep[v_1,v_2]$ of $\frakM(v,w)\times\frakM[v_1,v_2]$.
\end{itemize}
\end{proposition}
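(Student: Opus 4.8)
The plan is to deduce both statements from the corresponding facts about the full stable envelope $\stab$, together with the geometric description of Hecke correspondences recalled in \S\ref{sec:HSE1}. First I would recall that, by construction, $\stab[v_1,v_2]$ is (up to the sign $\varepsilon_{v_1,v_2}$) the specialization $\lim_0[\,\frakZ[v_1,v_2]_{\bbG_m}\,]$, so it is supported on the closure of $\frakZ[v_1,v_2]$ inside $\frakM(v,w)\times\frakM[v_1,v_2]$; thus for (b) it suffices to show $\frakZ[v_1,v_2]\subseteq\frakep[v_1,v_2]$. By the definition of $\frakZ(v,w)$ as a fiber product over $\frakM_0(v,w)^\diamond$, a point of $\frakZ[v_1,v_2]$ consists of a pair $(z,z')$ with $z\in\frakL(v,w)^\diamond$, $z'\in\frakM[v_1,v_2]$, and $\pi(z)=\pi(z')$ in $\frakM_0(v,w)^\diamond$; I would unwind the Bialynicki--Birula description of $\frakL(v,w)^\diamond$ and $\frakM(v,w)^\diamond$ from \S\ref{sec:BBQ}--\S\ref{sec:stable} to see that such a $z$ admits a $P$-stable subspace $V_1\oplus W_1$ with the associated graded pieces semisimplifying to the two factors of $z'$. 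This is exactly the condition defining $\GP[v_1,v_2]$ in \eqref{GP}, hence $(z,z')\in\frakep[v_1,v_2]$, giving (b).

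For (a), I would use the characterization \eqref{res} of the residue: $\bfr[v,0\,;\,v_1,v_2]$ is the leading term of the restriction $\stab[v_1,v_2]|_{\frakM[v,0]\times\frakM[v_1,v_2]}$ along the $A$-direction, and in particular $\bfr[v,0\,;\,v_1,v_2]$ is supported on the intersection of (the support of) $\stab[v_1,v_2]$ with $\frakM[v,0\,;\,v_1,v_2]=\frakM(v,w_1)^\op\times\frakM[v_1,v_2]$ — the residue support clause in \S\ref{sec:res} says it lies in $(\frakM[v,0]\times\frakM[v_1,v_2])\cap(\frakM(v,w)^\diamond\times_{\frakM_0(v,w)}\frakM(v,w)^\diamond)$. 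Combining this with part (b), the support of $\bfr[v,0\,;\,v_1,v_2]$ is contained in
$$\frakep[v_1,v_2]\,\cap\,\frakM[v,0\,;\,v_1,v_2],$$
which by the second identity in \eqref{hp} is precisely $\frakeh[v_1,v_2]$. This proves (a).

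The one point that needs genuine care — and which I expect to be the main obstacle — is the translation between the symplectic/Bialynicki--Birula language of \cite{MO} (attracting sets, fixed loci, the fiber product $\frakZ$) and the representation-theoretic language of \S\ref{sec:HSE1} (the conditions on $\tilde\Pi$-modules and semisimplifications defining $\GH$ and $\GP$). Concretely, one must check that the cocharacter $\rho$ of \eqref{rho2} induces on a semistable point $z$ exactly a $P$-stable decomposition $V\oplus W=(V_1\oplus W_1)\oplus(V_2\oplus W_2)$, that the limit $\lim_{t\to 0}\rho(t)\cdot z$ recovers the two graded pieces up to semisimplification on the $V_2$-part (where the framing is $0$, matching the condition $a'_2=0$ in \eqref{triple}), and that the map $\frakh[v_1,v_2\,;\,w_1]\to\frakM_0(v_2,w_2)$ in \eqref{map} is compatible with $\pi$ on $\frakM_0(v,w)^\diamond$. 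These identifications are essentially bookkeeping given \cite[prop.~3.13, lem.~3.14, prop.~3.15]{SV17a}, but they are where all the content sits; everything after that is formal manipulation of supports and specialization maps as in \cite[\S 2.6]{CG}.
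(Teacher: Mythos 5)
Your argument for part (a) is fine and matches the paper: given (b), restrict to the $A$-fixed locus and use the identity $\frakeh[v_1,v_2]=\frakep[v_1,v_2]\cap\frakM[v,0\,;\,v_1,v_2]$ from \eqref{hp}.

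The gap is in (b). You propose to deduce the support statement from the set-theoretic inclusion $\frakZ[v_1,v_2]\subseteq\frakep[v_1,v_2]$, established by ``unwinding'' the Bialynicki--Birula description so that any $z$ appearing in $\frakZ[v_1,v_2]$ admits a $P$-stable subspace of dimension $(v_1,w_1)$. But $\frakZ[v_1,v_2]$ is defined as $\frakZ(v,w)\cap\big(\frakM(v,w)\times\frakM[v_1,v_2]\big)$, where $\frakZ(v,w)=\frakL(v,w)^\diamond\times_{\frakM_0(v,w)^\diamond}\frakM(v,w)^\diamond$. A point $(z,z')$ of $\frakZ[v_1,v_2]$ only requires $z'\in\frakM[v_1,v_2]$; the $z$-coordinate can lie in \emph{any} attracting stratum $\frakL[u_1,u_2]$ as long as its image in $\frakM_0(v,w)^\diamond$ agrees with $\pi(z')$. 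At $t=0$ the map $\pi$ is far from injective, so this happens for $(u_1,u_2)\neq(v_1,v_2)$, and such a $z$ preserves a subspace of dimension $(u_1,w_1)$, not $(v_1,w_1)$. Concretely, the proof of Proposition~\ref{prop:D2}(d) introduces $\frakR_k=\frakL[v_1+k\delta_i,v_2-k\delta_i]\times_{\frakM_0}\frakM[v_1,v_2]\subset\frakZ$ for $k>0$ and then \emph{invokes Proposition~\ref{prop:D1}} to cut $C_k$ down to $\frakR_k\cap\frakep[v_1,v_2]$ — which is only meaningful because $\frakR_k\not\subseteq\frakep[v_1,v_2]$. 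So the inclusion you want is false in general, and a direct $t=0$ argument cannot work.

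What the paper does instead (and what is essential) is to pass to the deformed family over $\bbA^1$. Over $\bbG_m$ the map $\pi$ is an isomorphism, so $\frakZ[v_1,v_2]_{\bbG_m}$ is exactly the graph of $\nu$ restricted to $\frakL[v_1,v_2]_{\bbG_m}$ — only the single ``correct'' attracting stratum appears. The explicit description from \cite[\S 3.7.4]{SV17a} (recalled as \eqref{L}) then gives the clean inclusion $\frakL[v_1,v_2]_{\bbA^1}\subseteq\frakp[v_1,v_2]_{\bbA^1}$, hence $\frakZ[v_1,v_2]_{\bbG_m}\subseteq\frakep[v_1,v_2]_{\bbA^1}$. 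Since the latter sits as a \emph{closed} subset of $\frakM(v,w)_{\bbA^1}\times\frakM[v_1,v_2]_{\bbA^1}$ (by \eqref{emb-gp-D}), the specialization $\stab[v_1,v_2]=\lim_0[\frakZ[v_1,v_2]_{\bbG_m}]$ is forced to remain supported in $\frakep[v_1,v_2]$ at $t=0$, even though the full $t=0$ fiber product $\frakZ[v_1,v_2]$ need not be. The deformation is not bookkeeping here; it is what cuts out the extra strata that would wreck the naive argument. You should replace the proposed inclusion $\frakZ[v_1,v_2]\subseteq\frakep[v_1,v_2]$ by this family argument.
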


\smallskip

\begin{proof}
Replacing everywhere quiver varieties by deformed ones we define
varieties $\frakp[v_1,v_2]_{\bbA^1}$ and
$\frakep[v_1,v_2]_{\bbA^1}$  such that
$\frakp[v_1,v_2]$ and $\frakep[v_1,v_2]$ are the fibers at 0 of
$\frakp[v_1,v_2]_{\bbA^1}$ and $\frakep[v_1,v_2]_{\bbA^1}.$
Then, we have a closed embedding
\begin{align}\label{emb-gp-D}\frakep[v_1,v_2]_{\bbA^1}
\subseteq\frakM(v,w)_{\bbA^1}\times\frakM[v_1,v_2]_{\bbA^1}.
\end{align}
Now, for each representation $z\in\R(v,w)$ preserving the subspace
$V_1\oplus W_1$ of $V\oplus W$, let
$z_1$ be the restriction of $z$ to $V_1\oplus W_1$  and
$z_2$ be the induced representation on the quotient space
$(V\oplus W)/(V_1\oplus W_1)$.
Then, by \cite[\S 3.7.4]{SV17a} we have
\begin{align}\label{L}
\begin{split}
\frakL[v_1,v_2]_{\bbA^1}&=\L[v_1,v_2]_{\bbA^1}/\!\!/P,\\
\L[v_1,v_2]_{\bbA^1}&=\{z\in\P[v_1,v_2]_{\bbA^1}\,;\,
z_2\in \M_s(v_2,w_2)\}.
\end{split}
\end{align}
Therefore, we have an obvious inclusion
\begin{align}\label{inclusion}\frakL[v_1,v_2]_{\bbA^1}\subseteq\frakp[v_1,v_2]_{\bbA^1},
\end{align}
hence a chain of inclusions
\begin{align}\label{LHS}
\frakL[v_1,v_2]_{\bbG_m}\times_{\frakM^\diamond_{0,\bbG_m}}\frakM[v_1,v_2]_{\bbG_m}
\subseteq\frakep[v_1,v_2]_{\bbG_m}
\subseteq\frakep[v_1,v_2]_{\bbA^1}.
\end{align}
By \eqref{stab0} the cycle $\stab[v_1,v_2]$ is supported on the closure of the left hand side of \eqref{LHS} in
$$\frakM(v,w)_{\bbA^1}\times\frakM[v_1,v_2]_{\bbA^1}.
$$
Since \eqref{emb-gp-D} is a closed embedding,
the cycle $\stab[v_1,v_2]$ is supported on the set $\frakep[v_1,v_2],$
proving part (b).
From part (b) we deduce that the cycle $\bfr[v,0\,;\,v_1,v_2]$ is supported on 
$$\frakep[v_1,v_2]\,\cap\,\frakM[v,0\,;\,v_1,v_2].$$
Thus, part (a) follows from \eqref{hp}.

\end{proof}

\smallskip

\subsection{Definition of $\bbY$}
\label{sec:rmatrix}

\subsubsection{} 
Let $v$, $w_1$, $w_2$, $\gamma$ be as above.
Recall that
\begin{align*}
A_{w_1}\otimes A_{w_2}
&\subset\End\big(\bigoplus_{v_1,v_2}H_*^{G(w_1)\times T}(\frakM(v_1,w_1)\otimes_\Bbbk
H_*^{G(w_2)\times T}(\frakM(v_2,w_2)\big)\\
&\subset\End\big(\bigoplus_{v_1,v_2}H_*^{\G}(\frakM[v_1,v_2])\big).
\end{align*}
Maulik and Okounkov have defined 
an \emph{$R$-matrix} 
which is a formal series in $(A_{w_1}\otimes A_{w_2})[[\varpi^{-1}]]$ of the form
$$R_{w_1,w_2}(\varpi)=1+\hbar\sum_{l>0}R_{w_1,w_2\,;\,l}\,\varpi^{-l}.$$
The $R$-matrix is homogenous of degree zero relatively to the $\bbZ^I\times\bbZ$-grading. 
The \emph{classical $R$-matrix} 
is the first Fourier coefficient
$\bfr_{w_1,w_2}=R_{w_1,w_2\,;\,1}.$
We have the decomposition
\begin{align}\label{decomp}
\bfr_{w_1,w_2}=\bfr_{w_1,w_2,-}+\bfr_{w_1,w_2,0}+\bfr_{w_1,w_2,+}
\end{align}
with
$$\bfr_{w_1,w_2,-}=\prod_\calY\sum_{\calX<\calY}\bfr_{\calX,\calY},\quad
\bfr_{w_1,w_2,0}=\prod_\calY\bfr_{\calY,\calY},\quad
\bfr_{w_1,w_2,+}=\prod_\calY\sum_{\calX>\calY}\bfr_{\calX,\calY},$$
where $\calX$, $\calY$ are connected components of $\frakM(v,w)^\diamond$ and
$\bfr_{\calX,\calY}$ is a class in $H_*^{\G_\sp/A}(\calX\times\calY)$ which can be viewed as an operator
$H_*^{\G_\sp/A}(\calY)\to H_*^{\G_\sp/A}(\calX)$ 
via the convolution product.
The partial order on the set $\Irr(\frakM(v,w)^\diamond)$ is as in \eqref{order}.
The first term in \eqref{decomp} is given by
$$\bfr_{\frakM[u_1,u_2]\,,\,\frakM[v_1,v_2]}=\bfr[u_1,u_2\,;\,v_1,v_2\,;\,w_1,w_2]$$
for each dimension vectors $u_1,u_2,v_1,v_2$ such that $u_1=u_2=v_1=v_2=v$ and
$\frakM[u_1,u_2]<\frakM[v_1,v_2].$
It is viewed as a linear operator in 
$A_{w_1}(u_1-v_1)\otimes A_{w_2}(u_2-v_2)$.

\smallskip

\subsubsection{} 
Now, we choose
\begin{align}\label{w}w_1=w,\quad w_2=\delta_i,\quad 
\G=G(w)\times G(\delta_i)\times T=G(w)\times A\times T,\end{align}
where $A$ is as in \eqref{50}.
Then, we define
\begin{align*}
F_i=H_*^T(\frakM(\delta_i)),\quad F_i^\vee=\Hom(F_i,\Bbbk),\quad
A_i^f=F_i\otimes F_i^\vee.
\end{align*}
Taking the frace over $F_i$ yields a $\Bbbk$-linear map $A_i^f\to\Bbbk$.

\smallskip

Taking the formal variable $u$ to the first equivariant Chern class of the linear character of 
$G(\delta_i)$ yields isomorphisms $\Bbbk[u]\simeq\Bbbk[\delta_i]$, $F_i[u]\simeq F_{\delta_i}$,
$A_i^f[u]\simeq A^f_{\delta_i}$ and
$$\Bbbk[w][u]\to H_*^\G=\Bbbk[w][\varpi].$$
Hence, if $l>0$, $v\in\bbZ^I$ and $m\in A_{i}^f(v)$, the trace over $F_i$ yields an element
$$\tr_{F_{i}}\big((1\otimes m)\,R_{w,\delta_i\,;\,l}\big)\in A_w(v).$$
So, for each polynomial $m(u)$ in $A^f_{i}[u]$,
there is a unique element $\E(m(u))$ in $\prod_{w}A_w$
which acts on $F_w$ as  the operator
$$\E(m(u),w)=\Res_{u=\infty}\tr_{F_{i}}\Big((1\otimes m(u))\,R_{w,\delta_i}(u)\Big)\Big/\hbar.$$

\smallskip

\begin{definition}
Let $\bbY_Q$ be the $\bbZ^I\times\bbZ$-graded $\Bbbk$-subalgebra of $\prod_{w}A_w$
generated by the elements $\E(m(u))$ as $m(u)$ runs over $A^f_{i}[u]$.
\end{definition}

\smallskip

\noindent There is a surjective $\bbZ^I\times\bbZ$-graded $\Bbbk$-algebra homomorphism
\begin{align*}
\E\,:\,\text{Tensor\ Algebra}\Big(\bigoplus_i A_{i}^f[u]\Big)\Big/\!\sim\,\to\bbY_Q,\quad
m(u)\mapsto\E(m(u)),
\end{align*}
where $\sim$ is the two-sided ideal generated by the RTT=TTR relations \cite[(5.10)]{MO}.

\smallskip

\subsubsection{} 
Since $\G_\sp/A=G(w)\times T_\sp$, we have
$$\bfr[u_1,u_2\,;\,v_1,v_2]\in H_*^{G(w)\times T_\sp}(\frakM[u_1,u_2\,;\,v_1,v_2]).$$
Any element $m\in A^f_{i}$ can be viewed as a constant polynomial in $u$, yielding an operator $\e(m)=\E(m)$
which
acts on $F_w$ as the operator
$$\e(m\,;\,w)=\tr_{F_{i}}\big((1\otimes m)\,\bfr_{w,\delta_i}\big)\in A_w.$$
Let $\mo_Q$ be the $\Bbbk$-submodule of $\bbY_Q$ spanned by the elements $\e(m)$ above.
The commutator is a Lie bracket on $\bbY_Q$ for which $\mo_Q$ is a Lie subalgebra.
The Yang-Baxter equation implies that the map $m\mapsto\e(m)$ is a surjective $\bbZ^I\times\bbZ$-graded Lie 
algebra homomorphism
$$\e\,:\,\bigoplus_i A_{i}^f\to \mo_Q,$$
such that the Lie bracket on the left hand side is given by
$$[m_j\,,\,m_i]=(\tr_{F_{j}}\otimes 1)([\bfr_{\delta_j,\delta_i}\,,\,m_j\otimes m_i]),
\quad\forall m_i\in A_{i}^f,\quad\forall m_j\in A_{j}^f.
$$
Let $\calV_i$ and $\calW_i$ be the \emph{universal} and the \emph{tautological} 
$G(w)\times T$-equivariant vector bundles on $\frakM(v,w)$, which are given by
\begin{align}\label{bundle}\begin{split}
\calV_i=\M_s(v,w)\times_{G(v)} V_i,\quad \calW_i=\frakM(v,w)\times W_i.
\end{split}\end{align}
Let 
$\psi_{i,l}$ and $\phi_{i\,,\,l}$ be the operators in $\prod_wA_w$ given by the cap product 
with the $T$-equivariant cohomology class $\ch_l(\calV_i)$ and $\ch_l(\calW_i)$.
The following is proved in \cite{MO}~:

\smallskip

\begin{itemize}
\item  
$\mo_Q(v)$ is a free $\bbZ$-graded $\Bbbk$-submodule of finite rank, 

\smallskip



\item 
$\mo_Q=\mo_-\oplus\mo_0\oplus\mo_+$ where
$$\mo_-=\bigoplus_{v<0}\mo_Q(v),\quad \mo_0=\mo_Q(0),\quad\mo_+=\bigoplus_{v>0}\mo_Q(v)$$
 and $v>0$ if and only if $v\in\bbN^I$ and $v\neq 0.$

\smallskip

\item 
there are unique elements $h_{i,l}$ and $z_{i,l}$
in $\bbY_Q$ which act on $F_w$ via the 
operator of cap product with the classes $\psi_{i,l}$ and $\phi_{i,l}$.
The element $z_{i,l}$ is central.
We have a triangular decomposition 
$\bbY_Q\simeq\bbY_-\otimes\bbY_0\otimes\bbY_+$
such that the $\Bbbk$-algebra $\bbY_\pm$
is generated by $\mo_\pm$ and the adjoint action of $\{h_{i,l}\}$.
We also set $\bbY_{\geqslant }=\bbY_0 \otimes \bbY_+$.

\end{itemize}

\smallskip

\noindent
Unless there is a risk of confusion we will simply write $\bbY_Q=\bbY$ and $\mo_Q=\mo$.
We'll also write $\bbY_{\geqslant, K}=\bbY_\geqslant\otimes K$, $\bbY_{+,K}=\bbY_+\otimes K$, etc.

\medskip

\subsection{Definition of $\bfY^1$}\label{sec:Y1}

\subsubsection{} 
Let $\Y^1$ be the $\bbZ^I\times \bbZ$-graded $\Bbbk$-module given by
$$\Y^1=\bigoplus_{v}\Y^1(v),\quad
\Y^1(v)=\bigoplus_{k}\Y^1(v,k),\quad
\Y^1(v,k)=H_{k+2 d_v}^{G(v)\times T}(\Lambda^1(v)).$$
For each dimension vector $v$ the $\Bbbk$-module $\Y^1(v)$  
has a canonical $\Bbbk[v]$-module stucture given by the cap-product $\cap$.
We have defined a $\Bbbk$-algebra structure on $\Y^1$ in \cite[\S 5.1]{SV17a}.

\smallskip

\subsubsection{} 
Let $\Bbbk(\infty)$ 
be MacDonald's ring of symmetric functions with coefficients in $\Bbbk$.
It is the free commutative $\Bbbk$-algebra generated by the set of power sum polynomials
$\{p_l\,;\,l>0\}$.
It carries a comultiplication $\Delta$, a counit $\eta$, an antipode $S$ and
the $\bbZ$-grading such that the element $p_l$ has the degree $2 l$.
We define 
$\bfY(0)=\Bbbk(\infty)^{\otimes I}.$
For each vertex $i\in I$ and each integer $l>0$, let $p_{i,l}$ be the element of $\bfY(0)$ given by
$$p_{i,l}=(1\otimes\cdots\otimes 1\otimes p_l\otimes 1\otimes \cdots\otimes 1)/l!,$$
where $p_l$ is at the $i$-th spot.
Restricting a representation of $G(\infty^I)$ to the subgroup $G(v)$, yields a 
$\Bbbk$-algebra homomorphism 
$$\bfY(0)\to\Bbbk[v],\quad \rho\mapsto \rho_v.$$
Let $\calU_i$ be the \emph{tautological} 
$G(v)\times T$-equivariant vector bundle on $\Lambda^1(v)$, which is given by
\begin{align*}\begin{split}
\calU_i=\Lambda^1(v)\times V_i,
\end{split}\end{align*}
with the obvious $G(v)\times T$-action. 
There is a $\bfY(0)$-action $\circledast$ on $\Y^1$ such that the element
$p_{i,l}$ acts via the cap product with the class $\ch_l(\calU_i)$.
For each elements $x\in\bfY(0)$ and $y,z\in\Y^1$ we have 
$$x\circledast(y\star z)=\sum(x_1\circledast y)\star(x_2\circledast z).$$
We deduce that $\Y^1$ is a $\bfY(0)$-module $\bbZ^I\times\bbZ$-graded algebra.
The COHA is the smash product $$\bfY^1=\bfY(0)\ltimes\Y^1.$$
It is a free $\bbN^I\times(-\bbN)$-graded $\Bbbk$-algebra such that
$\bfY^1(v,k)=\Y^1(v,k)$ if $v\neq 0$
and $\bfY^1(0,k)$ is the degree $k$ part of $\bfY(0)$. We set $$\bfY_K=\bfY^1 \otimes K.$$

\smallskip

\subsubsection{} 
Let $F_{w}$ be the $\bbZ^I\times \bbZ$-graded $\Bbbk[w]$-module given by
$$F_w=\bigoplus_{v\in\bbN^I}F_w(v),\quad F_w(v)=\bigoplus_{k\in\bbZ}F_w(v,k),\quad
F_w(v,k)=H_{k+2d_{v,w}}^{G(w)\times T}\big(\frakM(v,w)\big).$$
We set
$|w\rangle=[\frakM(0,w)]$.
The following is proved in \cite[prop.~5.19]{SV17a}.

\smallskip

\begin{proposition}\label{prop:A} \hfill
\begin{itemize}[leftmargin=8mm]

\item[$\mathrm{(a)}$] The $\bbZ^I\times \bbZ$-graded $\Bbbk$-algebra $\Y^1$ acts on the 
$\bbZ^I\times \bbZ$-graded $\Bbbk[w]$-module $F_w$.

\item[$\mathrm{(b)}$] The action 
on the element $|v\rangle$ yields an injective map $\Y^1(v)\to F_{v}(v).$ 

\item[$\mathrm{(c)}$]
The representation of $\Y^1$ in $\bigoplus_wF_w$ is faithful.

\item[$\mathrm{(d)}$] The action of
$\Y^1(v)$ on  $|w\rangle$ yields a $\Bbbk[w]$-linear map 
$\Y^1(v)\otimes\Bbbk[w]\to F_{w}(v)$ whose
image is the pushward of $H_*^{G(w)\times T}(\frakL^1 (v,w))$ by the inclusion
$\frakL^1 (v,w)\subset\frakM(v,w)$.
\end{itemize}
\end{proposition}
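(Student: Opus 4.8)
The plan is to realise the action of $\Y^1$ on $F_w$ as convolution with the Hecke correspondences of \S\ref{sec:HSE1}, to deduce the module axiom from the composition law for these correspondences together with the construction of the COHA product in \cite[\S 5.1]{SV17a}, and then to read off the injectivity statements (b)--(d) by specialising the relevant correspondence to the case $v_1=0$, where it degenerates to the Lagrangian quiver variety. Concretely, for $v=v_1+v_2$ and a subspace $V_1\subseteq V$ of dimension vector $v_1$ I would use that by \cite[lem.~3.14, prop.~3.15]{SV17a} the Hecke correspondence $\frakh[v_1,\Lambda^1(v_2);w]$ is a union of closed Lagrangian local complete intersections inside $\frakM(v,w)\times\frakM(v_1,w)$, that the projection $p_+$ to $\frakM(v,w)$ is a closed embedding (hence proper), and that $z\mapsto\bar x_2$ defines a third leg $r\colon\frakh[v_1,\Lambda^1(v_2);w]\to\Lambda^1(v_2)/G(v_2)$. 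For $c\in\Y^1(v_2)=H_*^{G(v_2)\times T}(\Lambda^1(v_2))$ one then defines an operator $F_w(v_1)\to F_w(v)$ as convolution with $r^*(c)$, regarded as a class on $\frakM(v,w)\times\frakM(v_1,w)$ supported on $\frakh[v_1,\Lambda^1(v_2);w]$; properness of $p_+$ makes the push-forward legitimate, and the Lagrangian dimension $d_{v_2}$ of $\Lambda^1(v_2)$ (Theorem~\ref{thm:1.1}(a)) puts the operator in the correct degree. Every map in sight is $G(w)\times T$-equivariant, so the operators are $\Bbbk[w]$-linear; this gives the ``action'' part of (a).

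\emph{Module axiom.} For $(c\star c')\cdot\xi=c\cdot(c'\cdot\xi)$ I would compose the correspondences $\frakh[v_1,\Lambda^1(v_2);w]$ and $\frakh[v_1+v_2,\Lambda^1(v_3);w]$ by taking their fibre product over $\frakM(v_1+v_2,w)$ and identifying it with the iterated Hecke correspondence indexed by two-step flags $V_1\subset V_1'\subset V$ with $\dim(V_1'/V_1)=v_2$, equipped with its two quotient representations in $\Lambda^1(v_2)$ and $\Lambda^1(v_3)$; pushing this picture forward along the projection to $\Lambda^1(v_2)\times\Lambda^1(v_3)$ and feeding it into the induction diagram of \cite[\S 5.1]{SV17a} turns the convolution with the composite into the action of the COHA product $c\star c'\in\Y^1(v_2+v_3)$. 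The only delicate point is that all these fibre products have the expected dimension, so that no excess-intersection correction appears; this follows from the local complete intersection property of the Hecke correspondences together with the dimension formula of Theorem~\ref{thm:1.1}(a). I expect establishing this compatibility to be the main technical obstacle of the proposition.

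\emph{Parts (b) and (c).} Setting $v_1=0$ one checks directly that $\frakh[0,\Lambda^1(v);v]=\frakL^1(v,v)$, with $p_-$ the constant map to the point $\frakM(0,v)$, $p_+$ the closed embedding $\iota\colon\frakL^1(v,v)\hookrightarrow\frakM(v,v)$, and $r\colon\frakL^1(v,v)\to\Lambda^1(v)/G(v)$, $(\bar x,0,a^*)\mapsto[\bar x]$, which is smooth since $\frakL^1(v,v)$ is an open subvariety of a trivial vector bundle over $\Lambda^1(v)/G(v)$. Hence the map in (b) is $c\mapsto\iota_*r^*(c)$. Now $r$ has a section $\bar x\mapsto(\bar x,0,\id)$ (this triple is always $s$-stable), so $r^*$ is split injective; and $\iota_*$ is injective on the image of $r^*$, because by Proposition~\ref{prop:TL}(a) $\frakL^1(v,v)=\frakL(v,v)^\bullet$ is the attracting set of a $\bbG_m$-action on the smooth symplectic variety $\frakM(v,v)$, so $\frakL^1(v,v)$ is an affine bundle over $\frakM(v,v)^\bullet$ and a Bialynicki--Birula / hyperbolic localization argument, using the purity of Theorem~\ref{thm:1.1}(b), exhibits $\iota_*$ as a split injection on that part of the homology. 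This proves (b). For (c): if $0\neq c=\sum_vc_v\in\Y^1$, pick $v_0$ with $c_{v_0}\neq 0$; since the action is $\bbN^I$-graded the classes $c_v\cdot|v_0\rangle$ lie in the pairwise distinct summands $F_{v_0}(v)$, so vanishing of $c\cdot|v_0\rangle$ would force $c_{v_0}\cdot|v_0\rangle=0$, contradicting (b) applied with $v=v_0$.

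\emph{Part (d).} For general $w$ the same identification gives $\frakh[0,\Lambda^1(v);w]=\frakL^1(v,w)$, so the $\Bbbk[w]$-linear extension of $c\mapsto c\cdot|w\rangle$ is pull-back along $r\colon\frakL^1(v,w)\to\Lambda^1(v)/G(v)$ followed by push-forward along the closed embedding $\iota\colon\frakL^1(v,w)\hookrightarrow\frakM(v,w)$; in particular its image is contained in $\iota_*H_*^{G(w)\times T}(\frakL^1(v,w))$. For the reverse inclusion it suffices to see that $r^*\colon\Y^1(v)\otimes\Bbbk[w]\to H_*^{G(w)\times T}(\frakL^1(v,w))$ is surjective, indeed an isomorphism up to degree shift: $\frakL^1(v,w)$ is an open subvariety, with complement of positive codimension, of a vector bundle over $\Lambda^1(v)/G(v)$, and since by Theorem~\ref{thm:1.1}(b)--(c) and the corresponding purity and freeness statements for the Lagrangian quiver variety both sides are pure, even and free over the respective equivariant cohomology rings, the restriction map can neither acquire a kernel nor fail to be onto. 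Hence the image of the action of $\Y^1(v)$ on $|w\rangle$, extended $\Bbbk[w]$-linearly, is exactly $\iota_*H_*^{G(w)\times T}(\frakL^1(v,w))$, which is (d).
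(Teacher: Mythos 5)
Your approach---realizing the $\Y^1$-action by convolution with the Hecke correspondences $\frakh[v_1,\Lambda^1(v_2);w]$ and specializing to $v_1=0$, where the correspondence collapses to $\frakL^1(v,w)$, to read off (b)--(d)---is the route the present paper delegates to \cite[prop.~5.19, 5.20]{SV17a}, so the strategy is the right one. The significant gap, which you flag yourself, is in (a): the module axiom $(c\star c')\cdot\xi=c\cdot(c'\cdot\xi)$ is only sketched, and it is the technical heart of the result. To establish it one must identify the fibre product of $\frakh[v_1,\Lambda^1(v_2);w]$ and $\frakh[v_1+v_2,\Lambda^1(v_3);w]$ over $\frakM(v_1+v_2,w)$ with the two-step Hecke correspondence, run base change through the COHA induction diagram of \cite[\S 5.1]{SV17a}, and verify that the relevant refined Gysin morphisms compose without excess contributions. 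Since these correspondences live inside a moment-map fibre, that is not a formal consequence of the lci property---you must control the dimensions of the intermediate fibre products---so leaving it as ``the main technical obstacle'' is a real hole in the argument rather than a routine verification.

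Two smaller points. The section $\bar x\mapsto(\bar x,0,\id)$ of $r$ in part (b) is not literally $T$-equivariant, since $T_\dil$ rescales $a^*$; one must pass to the twisted embedding $(g,t)\mapsto(g,\hbar(t)^{-1}g,t)$ of $G(v)\times T$ into $G(v)\times G(w)\times T$ before applying $s^*$. This is harmless because $G(w)$ acts trivially on $\Lambda^1(v)$, so $s^*r^*$ remains the identity, but as written the split injectivity of $r^*$ is not fully justified. Also, the projection $p_+$ to $\frakM(v,w)$ is projective (hence proper) by \cite[prop.~3.13]{SV17a}, not a closed embedding for $v_2\neq 0$: the closed embedding is of $\frakh[v_1,\Lambda^1(v_2);w]$ into the product $\frakM(v,w)\times\frakM(v_1,w)$. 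Properness is all your argument needs, so this is just a misstatement.
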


\smallskip

\subsubsection{} 
Next, given any integer $l>0$, let $\x_{i,l}$ be the element in $\Y^1$ given by
$\x_{i,l}=[\,\Lambda_{(l\,\delta_i)}\,]$ if $q_i>0,$ and
$ \x_{i,l}=\delta_{l,1}\,[\,\Lambda^1(\delta_i)\,]$ if $q_i=0$.
We set
\begin{align*}
I^\re&=\{i\in I\,;\, q_i=0\}, \\
 I^\el&=\{i\in I\,;\, q_i=1\}, \\
  I^\hyp&=\{i\in I\,;\, q_i>1\}.
\end{align*}
Then, the following is proved in \cite[thm.~5.18]{SV17a}.

\begin{proposition}\label{prop:A} \hfill
\begin{itemize}[leftmargin=8mm]

\item[$\mathrm{(a)}$]
The $\Bbbk$-algebra  $\bfY^1$ is generated by the subset
$$\bfY(0)\cup\{\x_{i,1}\,;\,i\in I^\re\}\cup\{\x_{i,l}\,;\,l>0\,,\,i\in I^\el \cup I^\hyp\}.$$
\item[$\mathrm{(b)}$]
The $K$-algebra  $\bfY_K$ is generated by the subset
$$(\bfY(0)\otimes K)\cup\{\x_{i,1}\otimes 1\,;\,i\in I^\re\cup I^\el\}\cup\{\x_{i,l}\otimes 1\,;\,l>0\,,\,i\in I^\hyp\}.$$
\end{itemize}
\end{proposition}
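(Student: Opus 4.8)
The plan is to prove both generation statements by induction on the height $|v|=\sum_{i\in I}v_i$ of the dimension vector, using the geometry of the stratification $\Lambda^1(v)=\bigcup_{\nu}\Lambda_{\nu}$ of \eqref{Lambda0} together with the purity and freeness of Theorem~\ref{thm:1.1}. Let $\calC\subseteq\bfY^1$ (resp.\ $\calC_K\subseteq\bfY_K$) be the subalgebra generated by the indicated set. Since $\bfY(0)\subseteq\calC$ and $\bfY^1=\bfY(0)\ltimes\Y^1$, the identity $x\circledast(y\star z)=\sum(x_1\circledast y)\star(x_2\circledast z)$ shows that $\calC$ is stable under the $\circledast$-action of $\bfY(0)$; hence it suffices to prove $\Y^1(v)\subseteq\calC$ for every $v$ (and $\Y^1(v)\otimes 1\subseteq\calC_K$ for part~(b)). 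By Theorem~\ref{thm:1.1}(b),(c) the free $\Bbbk$-module $H_*^{G(v)\times T}(\Lambda^1(v))$ carries a split filtration by support, indexed by the restricted compositions $\nu\vDash v$ ordered by the closure order on the pieces $\Lambda_{\nu}$, whose associated graded pieces are the images of the equivariant Borel--Moore homologies of the open strata $\Lambda^{\circ}_{\nu}=\Lambda_{\nu}\setminus\bigcup_{\nu'\neq\nu}\Lambda_{\nu'}$; so it is enough to show that the $\nu$-th graded piece lies in $\calC$ for each $\nu$.

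For the inductive step one distinguishes the number $r$ of parts of $\nu=(\nu_1,\dots,\nu_r)$. If $r\geqslant 2$, each part $\nu_j$ has strictly smaller height, so by induction $\Y^1(\nu_j)\subseteq\calC$ and hence the image of the iterated Hall product $m_{\nu}\colon\Y^1(\nu_1)\otimes\cdots\otimes\Y^1(\nu_r)\to\Y^1(v)$ lies in $\calC$. The geometric point is that $\Lambda_{\nu}=G(v)\cdot\Lambda_W$ is the $G(v)$-saturation of the variety $\Lambda_W$ attached to a flag of type $\nu$, and that the Hecke-type correspondence underlying $m_{\nu}$ has image $\Lambda_{\nu}$ in $\Lambda^1(v)$ and is generically one-to-one over it; consequently $m_{\nu}$ is triangular for the support filtration, with $m_{\nu}\bigl([\Lambda_{(\nu_1)}]\otimes\cdots\otimes[\Lambda_{(\nu_r)}]\bigr)=c_{\nu}\,[\Lambda_{\nu}]+(\text{terms of strictly smaller support})$ for some $c_{\nu}\in\Bbbk^{\times}$, so $m_{\nu}$ surjects onto the $\nu$-th graded piece, which is therefore in $\calC$. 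If $r=1$, then $v=l\delta_i$ is concentrated at one vertex $i$, $\nu=(l\delta_i)$, and by Remark~\ref{rem:ind} $\Lambda_{(l\delta_i)}=\{0\}\times Rep(\k Q^{*},l\delta_i)$. When $q_i\geqslant 1$ this is a $G(l)\times T$-equivariant affine space, so its equivariant homology is free over the image of $\bfY(0)\to\Bbbk[l\delta_i]$ on the fundamental class $[\Lambda_{(l\delta_i)}]=\x_{i,l}$, which is one of the chosen generators; by $\circledast$-stability the $r=1$ piece lies in $\calC$. When $q_i=0$ the variety $\Lambda^1(l\delta_i)$ is a point for all $l$, and $\bigoplus_l\Y^1(l\delta_i)$ is the cohomological Hall algebra of the one-vertex loopless quiver, which is generated over $\Bbbk$ by $\bfY(0)$ and $\x_{i,1}=[\Lambda^1(\delta_i)]$; so only $\x_{i,1}$ is needed there. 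This completes the induction and proves part~(a).

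For part~(b), the only additional issue is that the classes $\x_{i,l}$ with $l\geqslant 2$ at an elliptic vertex $i$, now dropped from the generating set, must be shown to lie in $\calC_K$. This is a statement local at $i$: the ring $\bigoplus_l\Y^1(l\delta_i)$ depends only on the full subquiver on $\{i\}$, which for elliptic $i$ is the Jordan quiver, and after tensoring with $K$ it is identified, via Theorem~A(d), with the localized Hall algebra $\Y_K$ of the Jordan quiver. The latter is the positive half of the affine Yangian of $\frakgl(1)$ and is generated over $K$ by $\bfY(0)$ together with the single class $H_*^{G(1)\times T}(Rep(\Pi,1))\otimes 1$, i.e.\ by $\bfY(0)$ and $\x_{i,1}$ (see \cite{SV13b} and Theorem~A). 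Hence $\x_{i,l}=[\Lambda_{(l\delta_i)}]\in\calC_K$ for every $l$. Equivalently one checks this redundancy by comparing the graded dimension of $\Y^1(l\delta_i)$ with that of the span of the iterated products of $\x_{i,1}$ with $\bfY(0)$, using the Poincar\'e series formula \eqref{E:Okounkovconj}. This gives part~(b).

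The main obstacle is the assertion used in the case $r\geqslant 2$: that the support filtration on $H_*^{G(v)\times T}(\Lambda^1(v))$ genuinely splits --- so that surjectivity of $m_{\nu}$ onto one graded piece suffices, the ``smaller support'' terms being absorbed by the induction --- and that the correspondence defining $m_{\nu}$ is generically one-to-one onto $\Lambda_{\nu}$, so that the leading coefficient $c_{\nu}$ is a unit rather than possibly zero. Both rest on Theorem~\ref{thm:1.1}(b),(c) and on a careful study of the Hecke correspondences underlying the Hall product; proving the triangularity with unit leading coefficient is the technical heart of the argument. For part~(b) the corresponding difficulty is the elliptic-vertex redundancy, which is a genuine computation (or dimension count) and not a formal consequence of the stratification.
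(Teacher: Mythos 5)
The paper does not contain a proof of this proposition: it is stated together with the remark ``Then, the following is proved in [SV17a, thm.~5.18],'' so there is no internal argument here to compare your reconstruction against, and whatever geometric input the cited theorem uses is not reproduced in this text.

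Read as a self-contained attempt, your strategy (induction on $|v|$ via the support filtration attached to $\Lambda^1(v)=\bigcup_\nu\Lambda_\nu$, using Theorem~\ref{thm:1.1}(b),(c) to split the filtration) is a sensible guess, but the central deduction in the case $r\geqslant 2$ is a non-sequitur. From $m_\nu\big([\Lambda_{(\nu_1)}]\otimes\cdots\otimes[\Lambda_{(\nu_r)}]\big)=c_\nu[\Lambda_\nu]+(\text{lower support})$ with $c_\nu\in\Bbbk^\times$ you conclude that $m_\nu$ ``surjects onto the $\nu$-th graded piece''; but hitting the fundamental class does not give surjectivity onto the graded piece, which is the (pushforward of the) homology of the open stratum $\Lambda^\circ_\nu$ and is in general not cyclic over $\bfY(0)$. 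What is actually needed is that the whole image of $m_\nu$, restricted to the stratum, surjects --- which amounts to showing that the Hecke correspondence defining the Hall product is (generically) an affine fibration over $\Lambda^\circ_\nu$ in a way compatible with the inductive data; you gesture at this in the final paragraph but treat it as a leading-coefficient issue rather than the genuine missing lemma. The same conflation recurs in the $r=1$, $q_i\geqslant 1$ case, where you compute with $H^{G\times T}_*(\Lambda_{(l\delta_i)})$ (the homology of the full affine-space component) in place of the graded piece, which is the homology of the open stratum $\Lambda^\circ_{(l\delta_i)}$. Finally, for part~(b) the elliptic-vertex redundancy should be sourced directly to the generation result for the affine Yangian of $\frakgl(1)$ in [SV13b], not to Theorem~A(d)--(e): Theorem~A is the introductory summary of the companion paper's results, and part~(e) is essentially the statement you are proving, so invoking it is circular.
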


\smallskip

\subsubsection{}
Assume that $v_2=l\,\delta_i$.
Set
\begin{align*}
q_i>0,\quad l>0 &\,\Rightarrow\,
\frakC(v_1,v_2\,;\,w)=\frakh[v_1,\Lambda_{(v_2)}\,;\,w],\\
q_i=0,\quad l=1&\,\Rightarrow \,
\frakC(v_1,v_2\,;\,w)=\frakh[v_1,v_2\,;\,w],\\
q_i=0,\quad l>1&\,\Rightarrow\, \frakC(v_1,v_2\,;\,w)=\emptyset.
\end{align*}
Let $\frakM(w)$ be the disjoint sum of all $\frakM(v,w)$'s and $\frakC_{i,l}(w)$ be the disjoint sum of all
$\frakC(v_1,v_2\,;\,w)$'s where $v,v_1,v_2$ are as above.
We'll view $\frakC_{i,l}(w)$ as a closed $G(w)\times T$-invariant subvariety of  
the symplectic manifold $\frakM(w)^{\op}\times\frakM(w)$ which is proper over $\frakM(w)$. 
Hence, it acts by convolution on $F_w$, yielding a $\Bbbk[w]$-linear operator in $A_w$.
Therefore, for each $i\in I$ and $l\in\bbZ_{>0}$ the family of correspondences $\frakC_{i,l}(w)$ defines an element 
$$\frakC_{i,l}\in\prod_{w}A_{w}.$$
The following is proved in \cite[prop.~�5.20, thm.~5.22]{SV17a}.

\smallskip

\begin{proposition}\label{prop:B} \hfill
\begin{itemize}[leftmargin=8mm]
\item[$\mathrm{(a)}$]
$\x_{i,l}$ acts on $F_w$ via the operator $\frakC_{i,l}(w)$.
\item[$\mathrm{(b)}$]
$\bfY^1$ is isomorphic to the $\Bbbk$-subalgebra of $\prod_wA_w$ generated by
$$\{\psi_{i,l}\,;\,l>0\,,\,i\in I\}\cup\{\frakC_{i,1}\,;\,i\in I^\re\}\cup\{\frakC_{i,l}\,;\,l>0\,,\,i\in I^\el \cup I^\hyp\}.$$
\end{itemize}
\end{proposition}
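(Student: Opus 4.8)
The plan is to deduce both parts from the explicit geometry of the Hecke correspondences together with the generation statement \cite[thm.~5.18]{SV17a} and the faithfulness statement \cite[prop.~5.19]{SV17a}. Part~(a) carries the geometric content, so I would treat it first. Fix $v=v_1+v_2$ with $v_2=l\,\delta_i$ and recall the construction of the $\Y^1$-action on $F_w$ from \cite[\S5.1]{SV17a}: a class $c\in\Y^1(v_2)$ acts by $F_w(v_1)\to F_w(v)$ via convolution with a cycle supported on the Hecke correspondence $\frakh[v_1,v_2;w]\subseteq\frakM(v,w)\times\frakM(v_1,w)$ of \eqref{hecke}, obtained as $(p\times q)_{*}(r^{!}c)$, where $p\colon z\mapsto z$ and $q\colon z\mapsto z|_{V_1\oplus W}$ are the two projections and $r^{!}$ is the refined pull-back along the induced-quotient map $r\colon z\mapsto\bar x_2$, which sends a suitable closed sub-correspondence of $\frakh[v_1,v_2;w]$ to $\Lambda^1(v_2)/G(v_2)$. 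So the task is to identify $r^{!}\x_{i,l}$ with the fundamental class of $\frakC(v_1,v_2;w)$; then $(p\times q)_{*}(r^{!}\x_{i,l})=[\frakC_{i,l}(w)]$ and $\x_{i,l}$ acts as the operator $\frakC_{i,l}(w)$.

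Suppose $q_i>0$, so $\x_{i,l}=[\Lambda_{(v_2)}]$. Because $v_2=l\,\delta_i$ is concentrated at a single imaginary vertex, $\Lambda_{(v_2)}$ is an irreducible component of the reduced scheme $\Lambda^1(v_2)$ (this is \eqref{IrrLambda} for $q_i>1$, and the conormal to the zero orbit for $q_i=1$), so $\x_{i,l}$ is the reduced fundamental class of an irreducible subvariety. Set-theoretically $r^{-1}(\Lambda_{(v_2)}/G(v_2))=\frakh[v_1,\Lambda_{(v_2)};w]=\frakC(v_1,v_2;w)$, straight from the definition of $\frakh[v_1,\SS;w]$. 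By \cite[lem.~3.14, prop.~3.15]{SV17a} this preimage is a closed local complete intersection of the expected dimension; hence the refined Gysin operation carries no excess term and, all the varieties involved being reduced, assigns multiplicity one to each of its components. Thus $r^{!}\x_{i,l}=[\frakC(v_1,v_2;w)]$ and $\x_{i,l}$ acts via $\frakC_{i,l}(w)$. For $q_i=0$ the same argument applies with the reduced point $\Lambda^1(\delta_i)$ in place of $\Lambda_{(v_2)}$ and $\frakh[v_1,\delta_i;w]$ in place of $\frakC(v_1,v_2;w)$; and for $l>1$ both $\x_{i,l}=\delta_{l,1}[\Lambda^1(\delta_i)]$ and the empty correspondence $\frakC_{i,l}(w)$ give the zero operator, so~(a) holds in every case.

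For part~(b) I would combine three facts. First, the representation of $\bfY^1$ on $\bigoplus_w F_w$ is faithful — this is \cite[prop.~5.19]{SV17a}; it extends the faithfulness of $\Y^1$ because $\bigcap_w\ker(\bfY(0)\to\Bbbk[w])=0$ — so the action homomorphism $\bfY^1\to\prod_w A_w$ is injective and identifies $\bfY^1$ with its image. Second, by \cite[thm.~5.18(a)]{SV17a} the $\Bbbk$-algebra $\bfY^1$ is generated by $\bfY(0)$ together with $\{\x_{i,1}\,;\,i\in I^\re\}\cup\{\x_{i,l}\,;\,l>0,\,i\in I^\el\cup I^\hyp\}$, so the image of $\bfY^1$ is generated by the image of $\bfY(0)$ and the images of those $\x_{i,l}$, which are the $\frakC_{i,l}$ by part~(a). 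Third, $\bfY(0)=\Bbbk(\infty)^{\otimes I}$ is the free commutative $\Bbbk$-algebra on the $p_{i,l}$, and each $p_{i,l}$ acts on $F_w$ through the ring map $\bfY(0)\to\Bbbk[v]\to H^*_{G(w)\times T}(\frakM(v,w))$ followed by cap product; since $p_{i,l}$ is normalized to correspond, under the identification of symmetric functions with tautological classes, to the $l$-th Chern character of the standard $V_i$-representation, it acts as cap product with $\ch_l(\calV_i)$, that is, as $\psi_{i,l}$. Hence the image of $\bfY(0)$ is the $\Bbbk$-subalgebra generated by the $\psi_{i,l}$, and combining the three facts the image of $\bfY^1$ is precisely the $\Bbbk$-subalgebra of $\prod_w A_w$ generated by $\{\psi_{i,l}\,;\,l>0,\,i\in I\}\cup\{\frakC_{i,1}\,;\,i\in I^\re\}\cup\{\frakC_{i,l}\,;\,l>0,\,i\in I^\el\cup I^\hyp\}$, as asserted.

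The main obstacle is the identification in part~(a): one must verify that the refined pull-back of the component class $[\Lambda_{(v_2)}]$ produces no excess and has multiplicity one, and this is exactly where the local-complete-intersection structure and dimension count for the generalized Hecke correspondences $\frakh[v_1,\Lambda_{(v_2)};w]$ of \cite[prop.~3.15]{SV17a} are indispensable; the elliptic and hyperbolic cases, where $\Lambda_{(v_2)}$ is only one of many components of $\Lambda^1(v_2)$, are the delicate ones. Once part~(a) is in place, part~(b) is a formal consequence of faithfulness, the generation theorem, and the Chern-character normalization.
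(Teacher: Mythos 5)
The paper does not prove Proposition~\ref{prop:B}; it simply states ``The following is proved in \cite[prop.~5.20, thm.~5.22]{SV17a}'' and gives no internal argument. There is therefore no proof in this paper to compare your reconstruction against, and I can only assess it on its own terms as a guess at what the argument in \cite{SV17a} looks like.

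On those terms your overall structure is reasonable. For part~(a), interpreting the $\Y^1$-action on $F_w$ as a convolution $(p\times q)_*(r^!\,\bullet)$ through the Hecke correspondence $\frakh[v_1,v_2;w]$ and then computing $r^!\x_{i,l}$ is the natural thing to do, and the appeal to \cite[lem.~3.14, prop.~3.15]{SV17a} and to the explicit description of $\Lambda_{(v_2)}$ as a component of $\Lambda^1(v_2)$ fits the scaffolding. Part~(b) is, as you say, a formal consequence of part~(a) together with faithfulness and the generation statement \cite[thm.~5.18]{SV17a}; your remark about extending faithfulness from $\Y^1$ to $\bfY^1$ is harmless because, by the $\bbZ^I\times\bbZ$-grading, $\bfY^1(v,k)$ is either a piece of $\Y^1$ (if $v\neq 0$) or a piece of $\bfY(0)$ (if $v=0$), so there are no mixed terms to worry about.

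The step that is not airtight as written is the multiplicity-one claim in part~(a). Knowing that $\frakh[v_1,\Lambda_{(v_2)};w]$ is a local complete intersection of the expected dimension rules out excess intersection, so $r^![\Lambda_{(v_2)}]$ equals the cycle of the scheme-theoretic preimage $r^{-1}(\Lambda_{(v_2)})$; but that scheme could carry non-reduced structure, and the paper's convention that ``all schemes considered here will be reduced'' is a convention about the ambient varieties, not a theorem about Gysin pull-backs. To get coefficient exactly one on the fundamental class you still need an extra input --- for example that $r$ is smooth (or at least flat) over a dense open subset of $\Lambda_{(v_2)}$, or a generic-transversality statement --- and you should flag this rather than fold it into ``reduced, hence multiplicity one.'' This is almost certainly exactly the kind of smooth-fibration argument that \cite[lem.~3.19]{SV17a} (which the present paper invokes repeatedly for such purposes) is designed to supply, so the gap is fillable, but it is a genuine gap in the reconstruction as written.
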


\smallskip

\begin{remark}\label{rem:3.6}
Assume that $v_2=l\,\delta_i$ and $q_i>1$.
By \cite[prop.~�3.16]{SV17a}, 
 the Hecke correspondence
$\frakh[v_1,\Lambda_{\nu}\,;\,w]$ is either Lagrangian and irreducible or empty for any $\nu \vDash v_2$.
\end{remark}

\medskip

\section{Comparison of $\bfY_K$ and $\bbY_{\geqslant,K}$}

We'll use the same notation and assumptions as in the previous section.

\smallskip

\subsection{The main theorem}\hfill\\

We'll  prove the following.

\smallskip

\begin{theorem}\label{T:main} 
Assume that the torus $T$ contains a one parameter subgroup which scales all the quiver 
data by the same scalar.
Then, there is a $K$-algebra embedding
$\bfY_K\subset\bbY_{\geqslant,K}$ which intertwines the representations of 
$\bbY_{\geqslant,K}$ and $\bfY_K$ in $F_w\otimes K$ given in $ \S\S  \ref{sec:rmatrix}, \ref{sec:Y1}$
for each dimension vector $w$. The inclusion $\bfY_K \subset \bbY_{\geqslant, K}$ 
restricts to an embedding $\Y_K \subset \bbY_{+,K}$.
\end{theorem}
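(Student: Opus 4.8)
The plan is to exploit Theorem~A(e)/Proposition~\ref{prop:B}, which reduces the problem to locating, inside the classical $R$-matrix, the convolution operators attached to the generators $\x_{i,l}$ and $\psi_{i,l}$ of $\bfY^1$. More precisely, since $\bfY^1$ is generated by the Chern classes $\psi_{i,l}$ together with the Hecke correspondences $\frakC_{i,1}$ ($i\in I^\re$) and $\frakC_{i,l}$ ($i\in I^\el\cup I^\hyp$, $l>0$), and since the action of $\bfY^1$ on $\bigoplus_w F_w$ is faithful (Proposition~\ref{prop:A}(c)), it suffices to produce, for each such generator, an element of $\bbY_{\geqslant}$ (after $\otimes K$) which acts on every $F_w$ by exactly the same operator. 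The Cartan generators $\psi_{i,l}$ are immediate: they already appear in $\bbY$ as the elements $h_{i,l}$ acting by cap product with $\ch_l(\calV_i)$, so the loop-Cartan part $\bfY(0)\otimes K$ maps into $\bbY_0\otimes K$. The content is therefore in the Hecke generators, and the strategy is to show that $\frakC_{i,l}(w)$ occurs, with a \emph{nonzero constant} coefficient, in the leading term (the residue in the sense of \S\ref{sec:res}) of $\stab[v_1,v_2\,;\,w,\delta_i]$, so that $\x_{i,l}$ is proportional to one of the operators $\E(m(u))$ defining $\bbY$.

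The key steps, in order, are as follows. First, fix $w_2=\delta_i$ and analyse the support: by Proposition~\ref{prop:D1}(a), the relevant residue $\bfr[v,0\,;\,v_1,v_2]$ is supported on the generalized Hecke correspondence $\frakeh[v_1,v_2]$, which by \eqref{GH} is built precisely from the Hecke correspondence $\frakh[v_1,v_2\,;\,w]$ fibered over $\frakM(v_2,\delta_i)$. Second, in each of the three cases one must identify the relevant top-dimensional (Lagrangian, hence appearing with a well-defined multiplicity) components of $\frakeh[v_1,v_2]$: for $i\in I^\re$ with $l=1$ the space $\frakh[v_1,\delta_i\,;\,w]$ is itself irreducible Lagrangian; for $i\in I^\el$ or $I^\hyp$ with $v_2=l\delta_i$ the pieces $\frakh[v_1,\Lambda_\nu\,;\,w]$ with $\nu\vDash l\delta_i$ are Lagrangian and irreducible or empty (Remark~\ref{rem:3.6}), and among them $\frakh[v_1,\Lambda_{(l\delta_i)}\,;\,w]$ — corresponding to $\x_{i,l}=[\Lambda_{(l\delta_i)}]$ — is the one supported over the open locus $\Lambda_{(v_2)}=\{0\}\times Rep(\k Q^*,v_2)$ of $\Lambda^1(v_2)$. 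Third, one computes the coefficient of this component in the residue: using the characterization of $\stab$ in \cite[thm.~3.3.4]{MO} (diagonal leading term $=\varepsilon_\calX$, off-diagonal supported in the attracting set with controlled degree) together with a transversality/local model computation near a generic point of $\frakh[v_1,\Lambda_{(l\delta_i)}\,;\,w]$, one checks the multiplicity is a nonzero element of $\Bbbk$, in fact $\pm1$ up to a power of $\hbar$. The case $i\in I^\re$ is exactly what was done in \cite{McB}, and serves as the template. Fourth, having matched each generator of $\bfY^1$ (over $K$) with an element of $\bbY_{\geqslant,K}$ acting identically on all $F_w$, the faithfulness in Proposition~\ref{prop:A}(c) upgrades the collection of matchings to a well-defined $K$-algebra homomorphism $\bfY_K\to\bbY_{\geqslant,K}$; its injectivity is again faithfulness of the $\bfY^1$-action. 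Finally, for the last sentence of the statement, one observes that $\Y_K\subset\bfY_K$ is the subalgebra generated by the $\x_{i,l}$ (omitting the Cartan part $\bfY(0)$), and that the elements $\E(m(u))$ realizing the images of the $\x_{i,l}$ lie in the positive half $\bbY_+$ rather than $\bbY_0\otimes\bbY_+$ — because each Hecke correspondence $\frakC(v_1,v_2\,;\,w)$ strictly raises the dimension vector by $v_2=l\delta_i>0$, so the associated operator is homogeneous of strictly positive degree in $\bbN^I$ and hence lands in $\bbY_+$ by the triangular decomposition $\bbY=\bbY_-\otimes\bbY_0\otimes\bbY_+$. Thus the embedding restricts to $\Y_K\subset\bbY_{+,K}$.

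The main obstacle is the third step: the explicit identification of the leading coefficient of the stable envelope along the generalized Hecke correspondence, especially in the hyperbolic case. For real vertices the geometry is simple and the computation is in \cite{McB}; for elliptic and hyperbolic vertices the fixed-point geometry of $\frakM(v,w)^\diamond$ near $\frakeh[v_1,l\delta_i]$ is more delicate, the relevant normal bundles and polarizations $\varepsilon_{v_1,v_2}$ must be computed, and one must rule out that the coefficient, a priori a rational function on $\calX\times\calY$, degenerates or vanishes. The saving grace is that purity and freeness (Theorem~\ref{thm:1.1}(b),(c)) and the Lagrangian/irreducibility statements (\eqref{IrrLambda}, Remark~\ref{rem:3.6}) force the residue to be a $\Bbbk$-linear combination of fundamental classes of the Lagrangian components of $\frakeh[v_1,l\delta_i]$, so the coefficient is a genuine constant in $\Bbbk$ and it remains only to show it is nonzero — which can be done by restricting everything to the open dense part $\Lambda_{(v_2)}$, where $x=0$ and the correspondence becomes transparent, and computing a single Euler-class ratio there.
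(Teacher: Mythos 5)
Your overall strategy is the same as the paper's: reduce to generators via Theorem~A(e)/Proposition~\ref{prop:B}, locate each Hecke generator inside the classical $R$-matrix by analyzing the residue of the stable envelope supported on the generalized Hecke correspondence (Proposition~\ref{prop:D1}), and then use faithfulness of the $\Y^1$-action to upgrade the matching to an embedding of algebras. Your explanation of why the image of $\Y_K$ lands in $\bbY_{+,K}$ (strictly positive $\bbN^I$-degree forces $\e(A_i(l\delta_i))\subset\mo_+\subset\bbY_+$) is also correct.

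However, there is a genuine gap in the third step, and it is where the real work of the paper sits. You claim that ``the coefficient is a genuine constant in $\Bbbk$'' and compute it ``by restricting everything to the open dense part $\Lambda_{(v_2)}$.'' But this only controls the coefficient for a \emph{fixed} pair $(v,w)$. To produce a single element $m(u)\in A_i^f[u]\otimes K$ whose image $\E(m(u))$ equals $\frakC_{i,l}$ in $\prod_w A_w\otimes K$, you must know that the rational multiplier $a_\nu(v,w)$ (in the notation of Lemma~\ref{lem:59}) is \emph{independent of $v$ and $w$}: otherwise different $F_w$'s see different linear combinations and no single $m(u)$ works. This is precisely Proposition~\ref{lem:5.15}(b) in the paper, and its proof occupies two substantial subsections of delicate deformation and torsor arguments (comparing $\stab[0,v;w_1,\delta_i]$ to $\stab[0,v;w_0,\delta_i]$ via an $\bbA^N$-torsor, then comparing $\stab[v_1,l\delta_i;w,\delta_i]$ to $\stab[0,l\delta_i;w,\delta_i]$ via a smooth map of deformed $\frakp$'s). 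Your proposal does not address this at all, and ``an Euler-class ratio at a generic point'' does not obviously stabilize in $(v,w)$.

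A secondary omission: even after establishing the nonvanishing of the leading coefficient $a_{(v_2)}(v,w)$ (Proposition~\ref{lem:5.15}(a), which is roughly your Euler-class computation restricted to $\SS(v_2)$), the residue gives a linear combination $\sum_{\nu\vDash l\delta_i}a_\nu[\frakh[v_1,\Lambda_\nu;w]]$ over \emph{all} restricted compositions $\nu$, not just $(l\delta_i)$. Extracting $\frakC_{i,l}=[\frakh[v_1,\Lambda_{(l\delta_i)};w]]$ from this requires an induction on the partial order of compositions (the paper invokes the triangularity result \cite[prop.~3.22]{SV17a}), which your proposal does not mention. Finally, the paper's construction of the element $m$ itself is specific and nontrivial — it uses the pushforward $\pi_*$ and pullback $i^*$ through the component $\scrC=\{[\bar x,\bar a]\,;\,x=a=0\}$ of $\frakL^1(v_2,\delta_i)$ together with the localization isomorphism $K\simto H_*^T(\scrC_0)\otimes K$ to land in $F_i(v_2)^\vee\otimes K$ — whereas you treat the existence of a suitable $m(u)$ as a formal consequence of the support statement, which is not quite how the paper's argument goes.
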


\smallskip

\begin{proof}
By Proposition \ref{prop:B} and the definition of $\bbY$, we can view
$\bfY_K$ and $\bbY_{\geqslant,K}$ as $K$-subalgebras of $\prod_wA_w\otimes K$.
More precisely
$\bfY_K$ is generated by the set
$$\{\psi_{i,l}\,;\,i \in I, l \in \bbN\} \cup\{\frakC_{i,1}\,;\,i\in I^\re\cup I^\el\}\cup\{\frakC_{i,l}\,;\,l>0\,,\,i\in I^\hyp\},$$
and $\bbY_\geqslant\otimes K$ by the set
$$\{\psi_{i,l}\,;\,i \in I, l \in \bbN\} \cup\bigcup_{i\in I}
\e\big(A_{\delta_i}\big).$$
Therefore, the theorem is a consequence of the following proposition
which we will prove in the next two sections.

\smallskip

\begin{proposition} \label{prop:key} 
We have 
$$\big(l=1,\,i\in I^\re\cup I^\el\big)\ \text{or}\ \big(l>0,\,i\in I^\hyp\big) \Rightarrow 
\frakC_{i,l}\in\e\big(A_{i}(l\delta_i)\big).
$$
\end{proposition}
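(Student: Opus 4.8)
The plan is to prove the proposition by comparing two a priori different descriptions of the ``leading term'' of the classical $R$-matrix: on the one hand, the operator $\frakC_{i,l}$ is, by Proposition~\ref{prop:B}(a), convolution with the Hecke correspondence $\frakC_{i,l}(w)=\frakh[v_1,\Lambda_{(l\delta_i)};w]$ (resp.\ $\frakh[v_1,l\delta_i;w]$ when $q_i=0$, $l=1$); on the other hand, $\e(m)$ for $m\in A^f_i(l\delta_i)$ is $\tr_{F_i}\bigl((1\otimes m)\,\bfr_{w,\delta_i}\bigr)$, and the relevant Fourier coefficient of $\bfr_{w,\delta_i}$ is governed by the residue $\bfr[v,0;v_1,v_2]$ of the stable envelope, where $v_2=l\delta_i$. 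By Proposition~\ref{prop:D1}(a), this residue is supported on $\frakeh[v_1,v_2]=\frakeh[v_1,l\delta_i;w,\delta_i]$, which is the same geometric correspondence that underlies $\frakh[v_1,\Lambda_{(l\delta_i)};w]$ up to the fibre-product description \eqref{GH}. So the content of the proposition is: (i) both classes are supported on the \emph{same} irreducible Lagrangian cycle, and (ii) the residue $\bfr[v,0;v_1,v_2]$ appears with a nonzero \emph{constant} (i.e.\ $\Bbbk$-, not $\Bbbk[w]$-) multiple of the fundamental class of that cycle, so that choosing $m\in A^f_i(l\delta_i)$ appropriately (a single matrix coefficient in the $F_i$-basis) reproduces $\frakC_{i,l}(w)$ exactly.

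\textbf{Key steps.} First I would set up the dictionary precisely: fix $w$, $v$, write $v=v_1+l\delta_i$, and unwind \eqref{GH} and \eqref{e} to identify $\frakeh[v_1,l\delta_i;w,\delta_i]$ with a bundle over $\frakh[v_1,\Lambda_{(l\delta_i)};w]$ (or, for $q_i=0$, $l=1$, the Hecke correspondence itself) whose fibre direction is exactly the $\frakM(l\delta_i,\delta_i)$-factor on which $\tr_{F_i}$ will integrate. Second, I would invoke the irreducibility results: by Remark~\ref{rem:3.6} (and \cite[prop.~3.15, 3.16]{SV17a}) the Hecke correspondence $\frakh[v_1,\Lambda_{(l\delta_i)};w]$ is Lagrangian and irreducible (or empty), hence $\frakeh[v_1,l\delta_i]$ is pure of the expected dimension $\mid$ of \eqref{e}, so that $\bfr[v,0;v_1,v_2]$, being a Lagrangian cycle of degree $\mid$ supported there, is forced to be a $\Bbbk$-linear combination (in fact a scalar, by degree reasons, since $\G_\sp/A=G(w)\times T_\sp$ and the fundamental class has the right degree with $\Bbbk$-coefficient only after we show no $G(w)$-equivariant parameters enter) of that single fundamental class. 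Third, I would compute the scalar: one needs the coefficient $\varepsilon_{v,0\,;\,v_1,l\delta_i}$ times the multiplicity with which $\frakZ[v_1,l\delta_i]_{\bbG_m}$ enters the specialization in \eqref{stab00} to be a nonzero constant. For $i\in I^\re$ this was done in \cite{McB}; for $i\in I^\el$ and $i\in I^\hyp$ one argues by a local model near a generic point of the correspondence — identify a transverse slice with a product of smaller quiver varieties (using the étale-local structure of quiver varieties and the decomposition \eqref{prod}) and reduce to the rank-one / one-vertex computation, where the stable envelope and its residue are explicitly the diagonal-plus-lower-triangular classes and the leading coefficient is visibly $1$ (up to the polarization sign $\varepsilon$). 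Fourth and finally, with the scalar identified as a nonzero $c\in\Bbbk^\times$ independent of $w$, I would choose $m\in A^f_i(l\delta_i)$ to be $c^{-1}$ times the matrix unit dual to the fundamental class $[\frakL^1(l\delta_i,\delta_i)]\in F_i(l\delta_i)$ and check that $\e(m;w)=\tr_{F_i}\bigl((1\otimes m)\bfr_{w,\delta_i}\bigr)$, after taking the residue at $u=\infty$, equals convolution with $\frakC_{i,l}(w)$ for every $w$, hence $\frakC_{i,l}=\e(m)\in\e(A_i(l\delta_i))$.

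\textbf{Main obstacle.} The hard part will be step three: showing that $\bfr[v,0;v_1,l\delta_i]$ is a nonzero \emph{constant} multiple of $[\frakeh[v_1,l\delta_i]]$, with the constant independent of $w$ (and of $v_1$). Support (Proposition~\ref{prop:D1}) and a dimension/purity count pin down the cycle up to a $\Bbbk[w]$-coefficient; ruling out dependence on the $G(w)$-equivariant parameters and on which component, and then pinning the scalar to something nonzero, requires a genuine local computation of the stable envelope — precisely the place where real, elliptic, and hyperbolic vertices behave differently and must be handled case by case (as the introduction warns). For the elliptic case one expects a clean reduction to the Jordan-quiver stable envelope computed in \cite{MO}; for the hyperbolic case, the extra loops mean $\Lambda_{(l\delta_i)}$ is only one of several components and one must check that the stable envelope's residue sees exactly this ``abelianized'' component with coefficient $1$, which is the technical heart of the argument and what I expect to occupy the bulk of the next two subsections.
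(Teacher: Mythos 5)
The high-level strategy you describe --- match $\frakC_{i,l}$ with a suitable $\e(m)$ by comparing supports of the residue $\bfr[v,0;v_1,v_2]$ with generalized Hecke correspondences --- is indeed what the paper does, and your dictionary between $\frakeh[v_1,l\delta_i;w,\delta_i]$ and $\frakh[v_1,\Lambda_{(l\delta_i)};w]$ is on the right track. However, there is a genuine gap at your second and third steps. For $i\in I^\hyp$ the generalized Hecke correspondence $\frakeh[v_1,l\delta_i]$ has \emph{many} top-dimensional components, one for each composition $\nu\vDash l\delta_i$ (this is exactly the content of \eqref{IrrLambda} together with Remark~\ref{rem:3.6}). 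The residue $\bfr[v,0;v_1,v_2]$ is therefore \emph{not} forced to be a scalar multiple of a single fundamental class, and your proposed fix in the last paragraph --- that the stable envelope ``sees exactly the abelianized component with coefficient $1$'' --- is also not what happens: the paper proves instead (Lemma~\ref{lem:59}) that after applying a well-chosen pushforward-pullback $\psi\circ\phi$ built from the component $\scrC$ of $\frakL^1(l\delta_i,\delta_i)$, the answer is a full sum $\e(v_1,m;w)=\sum_{\nu\vDash v_2}a_\nu(v,w)\,[\frakh[v_1,\Lambda_\nu;w]]$ with \emph{all} compositions contributing. The technical heart is then Proposition~\ref{lem:5.15}: (a) nonvanishing of the leading coefficient $a_{(v_2)}$, and (b) independence of the $a_\nu$'s from $v$ and $w$; finally, one extracts $\frakC_{i,l}=[\Lambda_{(l\delta_i)}]$ by induction on $l$ using the algebra relations among the $[\Lambda_\nu]$'s supplied by \cite[prop.~3.22]{SV17a}. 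Your sketch skips this induction entirely, and without it the presence of the off-$(v_2)$ terms prevents you from concluding.

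A second, smaller issue: your proposed computation of the nonzero coefficient by ``identifying a transverse slice with a product of smaller quiver varieties and reducing to the rank-one / one-vertex computation'' is a reasonable heuristic, but it is not what the paper does and is not obviously rigorizable given that the relevant locus has several components. The paper instead proves Proposition~\ref{prop:D2}, showing that the \emph{open} subset $\frakeh[v_1,\SS(v_2)]$ over the generic stratum $\SS(v_2)=\M(\kappa_{v_2,0})$ has a unique top-dimensional component, and then uses an Euler-class/excess-intersection computation (plus the decomposition $\stab[v_1,v_2]=C_0+\cdots+C_l$ and Lemma~\ref{lem:AAA}) to pin down the coefficient. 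Likewise, the ``independence of $w$'' needs a genuine argument via specialization and smooth $\bbA^N$-torsors (Lemma~\ref{lem:spe} and the two-step proof of Proposition~\ref{lem:5.15}(b)); asserting it as an expected outcome is not a proof.
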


\end{proof}

\smallskip

\begin{remark}\label{conjecture}
We conjecture that the $K$-algebra 
$\bbY_{\geqslant,K}$ is indeed generated by $\bfY_K$ and the central elements $\phi_{i,l}$.
\end{remark}

\smallskip

\subsection{Generalized Hecke correspondences and $R$-matrices}
\label{sec:HSE2}

\subsubsection{}
To prove Proposition \ref{prop:key}, we need more details on the classical $R$-matrix.
Let $w_1$, $w_2$, $w$ are as in \eqref{w}
and
\begin{align*}v_2=l\delta_i,\quad l\in\bbN,\quad i\in I^\hyp.
\end{align*}
Since $q_i>1$,
the generic representation type in $RT(v_2,0)$ is 
$$\kappa_{v_2,0}=(1,v_2).$$
We abbreviate 
\begin{align*}
\SS(v_2)=\M(\kappa_{v_2,0}),\quad
\frakh[v_1,\SS(v_2)]=\frakh[v_1,\SS(v_2)\,;\,w],\quad
\frakeh[v_1,\SS(v_2)]=\frakeh[v_1,\SS(v_2)\,;\,w,\delta_i].
\end{align*}
By Proposition \ref{prop:D1}, the cycle $\bfr[v,0\,;\,v_1,v_2]$  is supported on the generalized Hecke correspondence
$\frakeh[v_1,v_2]$.
The goal of this section is to prove the following technical result.

\smallskip

\begin{proposition}\label{prop:D2}
\hfill
\begin{itemize}[leftmargin=8mm]
\item[$\mathrm{(a)}$] 
$\frakeh[v_1,\SS(v_2)]$ is open in $\frakeh[v_1,v_2]$.
\item[$\mathrm{(b)}$]  $\frakeh[v_1,\SS(v_2)]\simeq\frakh[v_1,\SS(v_2)]\times\bbP^{l-1}.$
\item[$\mathrm{(c)}$]  $\frakeh[v_1,\SS(v_2)]$ has a unique top dimensional component,
which is of dimension $\mid/2$.
\item[$\mathrm{(d)}$] $\bfr[v,0\,;\,v_1,v_2]|_{\frakeh[v_1,\SS(v_2)]}$ is a non zero multiple of the fundamental class
of the top dimensional irreducible component.
\end{itemize}
\end{proposition}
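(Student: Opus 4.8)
The plan is to analyze each of the four assertions by unwinding the explicit description of $\frakeh[v_1,v_2]$ as a quotient $\GH[v_1,v_2]/\!\!/P\times L$ from \eqref{GH}, restricted to the open locus where the quotient representation $\bar x_2$ lies in $\SS(v_2)=\M(\kappa_{v_2,0})$, i.e.\ is generic in the sense of having the generic representation type $(1,v_2)$. Part (a) should be immediate: $\SS(v_2)$ is an open subset of $\M(v_2)$ (it is the preimage of the dense open stratum $\frakM_0(\kappa_{v_2,0})\subseteq\frakM_0(v_2,0)$), and forming the fiber product with it carves out an open subset of $\frakeh[v_1,v_2]$, by definition of the various $\frakeh[v_1,\SS\,;\,\cdots]$ in \S\ref{sec:HSE1}. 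For part (b), I would use the third defining condition in \eqref{triple}, namely $\pi(\bar x_2)\simeq\pi(\bar x'_2)$ as $\Pi$-modules: when $\bar x_2\in\SS(v_2)$ it is already a simple $\Pi$-module of dimension $v_2=l\delta_i$ supported at the single vertex $i$ with $q_i>1$, so its semisimplification is itself, and the condition forces $\bar x'_2$ to be isomorphic to $\bar x_2$ as a $\Pi$-module; thus the datum of $z'_2=(\bar x'_2,0)\in\M_s(v_2,\delta_i)$ amounts, after accounting for the framing $\bar a'_2$ (with $a'_2=0$ but $a'^{*}_2$ free up to the stabilizer), to a point of $\bbP^{l-1}$, the projectivization of the space of maps $\k^{l}\to\k$ (equivalently, the framing data at the vertex $i$ modulo scalars). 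Making this precise — identifying $\frakM(l\delta_i,\delta_i)$ restricted to the locus whose underlying $\Pi$-module is a fixed simple as a $\bbP^{l-1}$-bundle point — is a local computation with quiver varieties for the one-vertex, $q_i$-loop quiver, and should yield the product decomposition $\frakeh[v_1,\SS(v_2)]\simeq\frakh[v_1,\SS(v_2)]\times\bbP^{l-1}$ cleanly.

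For part (c), combine (b) with the known geometry of $\frakh[v_1,\SS(v_2)]$. Since $\SS(v_2)=\M(\kappa_{v_2,0})$ is irreducible (the generic type stratum) and isotropic in $\R(v_2)$, \cite[lem.~3.14, prop.~3.15]{SV17a} and Remark~\ref{rem:3.6} give that $\frakh[v_1,\Lambda_\nu\,;\,w]$ is Lagrangian-irreducible-or-empty for each $\nu\vDash v_2$, and the top-dimensional piece corresponds to $\nu=(v_2)$, i.e.\ to $\SS(v_2)\subseteq\overline{\Lambda_{(v_2)}}$; so $\frakh[v_1,\SS(v_2)]$ has a unique component of top dimension, equal to $\dim\frakh[v_1,\Lambda_{(v_2)}\,;\,w]$. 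Multiplying by $\bbP^{l-1}$ and bookkeeping dimensions via the formula \eqref{e} for $\mid$ — using $d_{v,w_1}$, $d_{v_1,w_1}$, $d_{v_2,w_2}$ with $w_1=w$, $w_2=\delta_i$, $v_2=l\delta_i$, and the fact that $\frakh[v_1,\Lambda_{(v_2)}\,;\,w]$ is Lagrangian in $\frakM(v,w)^{\op}\times\frakM(v_1,w)$ hence of dimension $d_{v,w}=d_{v_1,w}+$ (a correction term) — should produce exactly $\mid/2$; this is the routine dimension count I would not grind through here.

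Part (d) is where the real work lies, and I expect it to be the main obstacle. We know from Proposition~\ref{prop:D1}(a) that $\bfr[v,0\,;\,v_1,v_2]$ is a Lagrangian cycle supported on $\frakeh[v_1,v_2]$, of the correct dimension $\mid/2$ (by \eqref{res} and the middle-degree statement preceding Proposition~\ref{prop:D1}); hence on the open set $\frakeh[v_1,\SS(v_2)]$ it is a $\bbZ$-linear combination of the fundamental classes of the top-dimensional components meeting that open set, which by (c) is a single component. So the restriction is automatically \emph{some} multiple of that fundamental class, and the only content is that the multiple is \emph{nonzero}. To pin this down I would trace through the construction of $\stab$ and its residue: the residue $\bfr_{\calX\times\calY}$ is extracted from the leading term of the restriction $\stab|_{\calX\times\calY}$ along the diagonal-type degeneration in \eqref{res}, and the stable envelope is characterized (see \cite[thm.~3.3.4]{MO}) by having the fundamental-class-of-attracting-set as its leading diagonal term with coefficient the polarization sign $\varepsilon_{\calX}$, which is $\pm1$ and in particular nonzero. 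Concretely, the attracting set $\frakL[v_1,v_2]$ degenerates, at the generic point of $\frakeh[v_1,\SS(v_2)]$, to a transverse (reduced, multiplicity-one) piece of $\frakeh[v_1,v_2]$ because the relevant $\bbG_m$-action there acts with a single weight on the normal directions — this is exactly the statement that the one-vertex hyperbolic local model is as simple as possible — so no cancellation or higher multiplicity can occur. The crux is therefore a local transversality/reducedness check at the generic point of the top component, which I would carry out using the explicit $\P[v_1,v_2]_{\bbA^1}$-description \eqref{L}, \eqref{inclusion} of the attracting set in the deformed family and the fact (Proposition~\ref{prop:TL}) that $\frakL^1$ is a reduced Lagrangian; the deformation-to-$\bbG_m$ in \eqref{stab0} is an isomorphism away from $0$, so the cycle $\stab_{\bbG_m}$ restricts to a genuine fundamental class there with unit coefficient, and specialization $\lim_0$ preserves that the component of interest appears with nonzero coefficient.
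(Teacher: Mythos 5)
The proposals for (a) and the identification of (d)'s content as a non-vanishing statement are fine, but parts (c) and (d) contain genuine errors and miss the substance of the paper's argument.

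For (c), you assert that $\SS(v_2)=\M(\kappa_{v_2,0})$ is \emph{isotropic} in $\R(v_2)$ and that $\SS(v_2)\subseteq\overline{\Lambda_{(v_2)}}$. Both claims are false: $\SS(v_2)$ is an open dense subset of $\M(v_2)=\mu^{-1}(0)$, which for $q_i>1$ has dimension $(2q_i-1)l^2>q_il^2=\tfrac12\dim\R(v_2)$, hence is coisotropic but not isotropic; and a generic point of $\SS(v_2)$ has $x\neq 0$, so it does not lie in $\Lambda_{(v_2)}=\{0\}\times Rep(\k Q^*,v_2)$. Consequently neither the Lagrangianity of $\frakh[v_1,\Lambda_\nu\,;\,w]$ nor the irreducibility of $\frakh[v_1,\Lambda_{(v_2)}]$ can be transported to $\frakh[v_1,\SS(v_2)]$ via the inclusion you invoke. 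The actual proof of (c) is a stratified dimension count over pairs of representation types $(\tau,\tau_1)$ — showing that $\dim\frakeh(\tau\,;\,\tau_1)\leqslant\mid/2$ with equality only for $(\kappa_{v_1,w}\oplus\kappa_{v_2,0},\kappa_{v_1,w})$, with a separate analysis of the case when $v_2$ occurs as a constituent of $\tau_1$ — and is not a soft consequence of Lagrangianity.

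For (d), you propose to read off non-vanishing from the characterization in \cite[thm.~3.3.4]{MO} that the \emph{diagonal} leading term of $\stab$ is the fundamental class of the attracting set with a sign. But $\bfr[v,0\,;\,v_1,v_2]$ is an \emph{off-diagonal} residue; the paper explicitly warns (\S\ref{sec:res}) that the off-diagonal terms of the Lagrangian residue of $\stab$ vanish, so the diagonal characterization is not the right tool. The actual proof decomposes $\stab[v_1,v_2]=C_0+\cdots+C_l$ via \cite[prop.~3.5.1]{MO} according to the filtration by attracting strata $\frakR_k$, proves (Lemma~\ref{lem:AAA}, a codimension estimate via a stack-vector-bundle argument) that the middle terms $C_1,\dots,C_{l-1}$ do not contribute in the relevant $A$-degree, and then computes the $A$-degree-$2lv_i-2$ parts of $C_0|$ and $C_l|$ on the open set in terms of Euler classes $\eu(N/\nu_0)$ and $\eu(N/\nu_l)$, concluding via an exact sequence $0\to\frakL\to\nu_0\to\nu_l\to\hbar\otimes\frakL^{-1}\to 0$ that the coefficient is $\hbar\,\eu(N/\nu_0+\nu_l)$ times a nonzero rational. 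Your intuition that ``a single weight acts on the normal directions so no cancellation can occur'' is not what happens — there are two competing contributions $C_0$ and $C_l$, and ruling out cancellation is precisely what the Euler-class computation establishes. Without the $C_k$-decomposition, the middle-term vanishing lemma, and the normal-bundle identification, the non-vanishing does not follow.
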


\subsubsection{Proof of $\mathrm{(a)}$, $\mathrm{(b)}$, $\mathrm{(c)}$}
Part (a) is easy. 
For (b), note that if a tuple $z_2=(\bar x_2,\bar a_2)$ in $\M_s(v_2,\delta_i)$ represents a point of the stratum
$\frakM_0(\kappa_{v_2,0})$, then, since $a_2^*\neq 0$ and since the representation $z_2$
must have a constituent of dimension $\delta_i$, we deduce that $a_2=0$.
Now, let us prove part (c).
For each $\tau\in RT(v,w)$ and $\tau_1\in RT(v_1,w)$ we write
\begin{align*}
\frakh(\tau\,;\,\tau_1)&=\frakh[v_1,\SS(v_2)]\,\cap\,(\frakM(\tau)\times \frakM(\tau_1)),\\
\frakeh(\tau\,;\,\tau_1)&=\frakeh[v_1,\SS(v_2)]\,\cap\,(\frakM(\tau)\times \frakM(\tau_1)\times\frakM(v_2,\delta_i)).\end{align*}
Part (c) follows from the following lemma.

\smallskip

\begin{lemma}\hfill
\begin{itemize}[leftmargin=8mm]
\item[$\mathrm{(a)}$] $\frakeh(\tau\,;\,\tau_1)\neq\emptyset\Rightarrow\tau \leqslant \tau_1\oplus\kappa_{v_2,0}.$
\item[$\mathrm{(b)}$] $\dim\frakeh(\tau\,;\,\tau_1)\leqslant \mid/2$.
\item[$\mathrm{(c)}$] $\dim\frakeh(\tau\,;\,\tau_1)=\mid/2\iff\tau=\kappa_{v_1,w} \oplus \kappa_{v_2,0}$ 
and $\tau_1=\kappa_{v_1,w}$.
\item[$\mathrm{(d)}$]   $\frakeh(\kappa_{v_1,w}\oplus\kappa_{v_2,0}\,;\,\kappa_{v_1,w})$ 
has a unique top-dimensional  irreducible component.

\end{itemize}
\end{lemma}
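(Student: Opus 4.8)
The plan is to prove the four parts of the lemma in order, since (a) feeds into (b) and (c), and (c) identifies the component studied in (d). Throughout, the key input is the stratification of quiver varieties by representation type and the behavior of the generalized Hecke correspondence with respect to the map to $\frakM_0$.

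\textbf{Parts (a) and (b).} First I would unwind the definition \eqref{GH} of $\frakeh[v_1,\SS(v_2)]$ as a quotient of triples $(z,z'_1,z'_2)$ with $z|_{V_1\oplus W_1}\simeq z'_1$, $a'_2=0$, and $\pi(\bar x_2)\simeq\pi(\bar x'_2)$. A point of $\frakeh(\tau;\tau_1)$ therefore has $z$ an extension of $z'_2$ by $z'_1$ (as a $\tilde\Pi$-module on $V\oplus W_1$), so the semisimplification $\pi(z)$ is a quotient of $\pi(z'_1)\oplus\pi(z'_2)$; since $z'_2$ lies over the generic stratum $\SS(v_2)=\M(\kappa_{v_2,0})$ and, by the computation in the proof of part (b) above, has $a_2=0$, its semisimplification is $\pi(\bar x'_2)$, of type $\kappa_{v_2,0}$. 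This gives $\tau\leqslant\tau_1\oplus\kappa_{v_2,0}$ in the partial order on $RT$, proving (a). For (b), I would stratify $\frakeh(\tau;\tau_1)$ by the fibration $\frakh[v_1,\SS(v_2)]\to\frakM_0(v_2,0)$ of Remark~\ref{rem:3.6}/Proposition~\ref{prop:TL} and bound the dimension fiberwise: the fiber over a point of $\frakM_0(\tau_1)\times$(generic point) is, by \cite[lem.~3.14, prop.~3.15]{SV17a}, an isotropic (hence $\leqslant$ half-dimensional) piece of a Hecke correspondence, and adding the dimensions of the base strata $\frakM_0(\tau)$, $\frakM_0(\tau_1)$ and $\frakM(v_2,\delta_i)$ and comparing with \eqref{e} gives $\dim\frakeh(\tau;\tau_1)\leqslant\mid/2$. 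The bookkeeping here is exactly the same dimension count as in \cite[prop.~3.15]{SV17a}, now relativized over the base $\frakM_0(v_2,0)$.

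\textbf{Part (c).} For the equality case, I would track where each inequality in the estimate of (b) is saturated. Equality in the base-dimension count forces $\frakM_0(\tau)$ and $\frakM_0(\tau_1)$ to be the \emph{open dense} strata, i.e. $\tau=\kappa_{v,w}$-compatible and $\tau_1=\kappa_{v_1,w}$; combined with part (a), $\tau\leqslant\kappa_{v_1,w}\oplus\kappa_{v_2,0}$, and the dimension equality forces $\tau=\kappa_{v_1,w}\oplus\kappa_{v_2,0}$ (any further degeneration drops the dimension, by the standard fact that $\frakM_0(\tau)\oplus\frakM_0(\kappa)\subseteq\overline{\frakM_0(\tau\oplus\kappa)}$ with strict inclusion unless $\tau\oplus\kappa$ is already the type). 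One must also check that the isotropic fiber of the Hecke correspondence is genuinely Lagrangian over this stratum, which is precisely the content of the $\SS=\Lambda_{(v_2)}$ case of \cite[prop.~3.15]{SV17a} together with Remark~\ref{rem:3.6}.

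\textbf{Part (d).} Finally, for uniqueness of the top-dimensional component of $\frakeh(\kappa_{v_1,w}\oplus\kappa_{v_2,0};\kappa_{v_1,w})$, I would use the isomorphism $\frakeh[v_1,\SS(v_2)]\simeq\frakh[v_1,\SS(v_2)]\times\bbP^{l-1}$ from part (b) of the Proposition, which reduces the claim to uniqueness of the top component of $\frakh[v_1,\SS(v_2)]$ over the relevant strata; by Remark~\ref{rem:3.6} (irreducibility of $\frakh[v_1,\Lambda_\nu;w]$, applied with $\nu=(v_2)$ since $\SS(v_2)$ is the open dense part of $\Lambda_{(v_2)}$) this correspondence is irreducible, and then one observes that the open locus where $z$ and $z_1$ have the generic types $\kappa_{v,w}$, $\kappa_{v_1,w}$ is dense in it, so its closure is the unique top-dimensional component. \emph{The main obstacle} I anticipate is part (c): making the dimension count sharp enough to pin down both $\tau$ and $\tau_1$ simultaneously requires care about how the strata $\frakM(\tau)\to\frakM_0(\tau)$ interact with the Hecke fibration — in particular one must rule out the possibility that a smaller stratum in the base is compensated by a larger (non-isotropic) fiber, which is where the Lagrangian (not merely isotropic) property of $\frakh[v_1,\Lambda_{(v_2)};w]$ is essential and where the hypothesis $i\in I^\hyp$ enters through Remark~\ref{rem:3.6}.
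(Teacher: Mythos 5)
Your approach has a genuine gap that invalidates the dimension count in (b), (c), and the irreducibility argument in (d): you assert that ``$\SS(v_2)$ is the open dense part of $\Lambda_{(v_2)}$,'' but this is false. Recall $\SS(v_2)=\M(\kappa_{v_2,0})$ is the locus of \emph{simple} $\Pi$-representations of dimension $v_2$ inside $\M(v_2)$, while $\Lambda_{(v_2)}=\{0\}\times Rep(\k Q^*,v_2)$ is the locus where $x=0$. Since $q_i>1$, a generic simple representation of dimension $l\delta_i$ has $x\neq 0$, so $\SS(v_2)\not\subset\Lambda_{(v_2)}$; indeed $\SS(v_2)\not\subset\Lambda^1(v_2)$ at all (a nonsimple representation preserves a flag, and a simple one with $x=0$ is the only intersection with $\Lambda_{(v_2)}$, which is a proper subset of $\SS(v_2)$). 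Consequently you cannot invoke Remark~\ref{rem:3.6}, which is a statement about the correspondences $\frakh[v_1,\Lambda_\nu\,;\,w]$, to conclude that $\frakh[v_1,\SS(v_2)]$ is Lagrangian or irreducible. In fact $\frakh[v_1,\SS(v_2)]$ is typically \emph{not} isotropic in $\frakM(v,w)^\op\times\frakM(v_1,w)$ (a quick dimension count: $\dim\SS(v_2)=1+(2q_i-1)l^2$ whereas $\dim\Lambda_{(v_2)}=q_il^2$), so your ``isotropic hence $\leqslant$ half-dimensional'' fiberwise bound never gets off the ground, and the ``irreducibility of $\frakh[v_1,\Lambda_\nu;w]$ applied with $\nu=(v_2)$'' step in part (d) rests on a false premise.

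The paper instead works with the explicit smooth projection $p:\H^\circ[v_1,v_2]\to\M_s(v_1,w)\times\M(v_2)$ and uses \cite[lem.~3.19]{SV17a} to compute the dimension of each fiber in terms of $\dim\Hom_{\tilde\Pi}(N_1,N_2)$ for $(N_1,N_2)\in\M_s(\tau_1)\times\M(\kappa_{v_2,0})$. The case you correctly flag as the obstacle — when $\tau_1$ already contains $v_2$ among its constituents — is handled precisely by bounding $\dim\Hom_{\tilde\Pi}(N_1,N_2)\leqslant k$ on the closed stratum $Z$ and using the root-theoretic inequality coming from $q_i>1$, namely $\dim\Ext^1_{\tilde\Pi}(S_k,S_k)=2+2k^2(q_i-1)>0$ (encoded in the codimension estimate $\dim\frakM_0(\tau'_1)-\dim\frakM_0(\tau_1)=(k-1)(2+2(q_i-1)l^2)$), to show that the enlarged fibers over $Z$ still give strictly sub-maximal dimension. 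Your diagnosis of the difficulty as a tension between ``base stratum vs.\ non-isotropic fiber'' is not the mechanism: the fibers of $p$ are affine spaces of computable dimension, and the competition is purely one of linear-algebraic dimension counts, not of symplectic geometry. Irreducibility in (d) likewise follows in the paper from irreducibility of $\M_s(\tau_1)$ and $\M(\kappa_{v_2,0})$ and the smoothness with connected fibers of $p$ over $U$, not from Remark~\ref{rem:3.6}.

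To repair your sketch you would need to (i) drop the identification $\SS(v_2)\subset\Lambda_{(v_2)}$, (ii) replace the ``isotropic hence half-dimensional'' fibration argument with the explicit fiber dimension formula of \cite[lem.~3.19]{SV17a} involving $\Hom_{\tilde\Pi}$, and (iii) carry out the stratification $\M_s(\tau_1)\times\M(\kappa_{v_2,0})=U\sqcup Z$ and the codimension bound on $Z$ using $q_i>1$. Only part (a) of your argument (the semisimplification of $z$ being a direct sum of those of $z'_1$ and $z'_2$) survives as written, and the paper simply calls that part obvious.
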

\begin{proof} Part (a) is obvious. We'll prove (b), (c), (d) simultaneously.
The proof goes along lines similar to that of \cite[prop.~3.16]{SV17a}. 
Consider the natural projection
$$p : \H^\circ[v_1,v_2] \to \M_s(v_1,w) \times \M(v_2).$$

\smallskip
 
First, assume that $v_2$ is not  the dimension vector of any constituent of $\tau_1$. 
Hence, we have $\frakeh(\tau\,;\,\tau_1)\neq\emptyset$ if and only if
$\tau= \tau_1\oplus\kappa_{v_2,0}.$ Then, we have 
$$\Hom_{\tilde\Pi}(N_1,N_2)=\{0\},\quad\forall (N_1,N_2)\in\M_s(\tau_1) \times \M(\kappa_{v_2,0}).$$
Here we view $N_1$ and $N_2$ as $\tilde\Pi$-modules as in \cite[\S 3.7.2]{SV17a}. 
By \cite[lem.~3.19]{SV17a}, the variety
$p^{-1}(\M_s(\tau_1) \times \M(\kappa_{v_2,0}))$ is smooth over $\M_s(\tau_1) \times \M(\kappa_{v_2,0})$ 
with connected 
fibers of dimension 
$$v_1 \cdot v_2-(v_1+\delta_\infty\,,\,v_2)_{\tilde{Q}}.$$ 
Fix an $I$-graded subspace $V_1 \subset V$ of dimension $v_1$ and let $P \subset G(v)$ be its stabilizer. 
The $P$-action on $\H^\circ[v_1,v_2]$ is free.
Therefore, we have
$$\dim\frakh^{\circ}(\tau_1\oplus\kappa_{v_2,0}\,;\,\tau_1) = 
-(v_1,v_2)_Q + w \cdot v_2 + \dim\frakM(\tau_1) + 2(q_i-1)l^2+1.$$
A short computation yields
\begin{align}\label{calcul}
\mid/2 - \codim_{\frakM(v_1,w)}\frakM(\tau_1)=-(v_1,v_2)_Q + w \cdot v_2 + \dim\frakM(\tau_1) 
+ 2(q_i-1)l^2+l.
\end{align}
We deduce that 
$$\dim\frakh(\tau_1\oplus\kappa_{v_2,0}\,;\,\tau_1)=\mid/2 - \codim_{\frakM(v_1,w)}\frakM(\tau_1)-l+1.$$
Hence, Proposition \ref{prop:D2} (b) yields
\begin{align}\label{E:req1}
\begin{split}
\dim\frakeh(\tau_1\oplus\kappa_{v_2,0}\,;\,\tau_1)&= \mid/2 - \codim_{\frakM(v_1,w)}\frakM(\tau_1).
\end{split}
\end{align}
Further, the set $\frakeh(\tau_1\oplus\kappa_{v_2,0}\,;\,\tau_1)$ is irreducible,
because both $\M_s(\tau_1)$ and $\M(\kappa_{v_2,0})$ are irreducible.
This proves the lemma in this case.

\smallskip

Next let us assume that $\tau_1$ does contain the dimension vector $v_2$. Let
\begin{align*}
U&=\{((\bar{x}_1,\bar{a}_1), \bar{x}_2) \in \M_s(\tau_1) \times \M(\kappa_{v_2,0})\;;\; 
\Hom_{\tilde\Pi}((\bar{x}_1,\bar{a}_1), \bar{x}_2)=0\},\\
Z&=(\M_s(\tau_1) \times \M(\kappa_{v_2,0})) \setminus U.
\end{align*}
Note that $U$ is a dense open subset of $\M_s(\tau_1) \times \M(\kappa_{v_2,0})$. 
By the same argument as above,
\begin{equation}\label{E:req2}
\dim(p^{-1}(U)/\!\!/P \times \mathbb{P}^{l-1})=\mid/2-\codim_{\frakM(v_1,w)}\frakM(\tau_1)
\end{equation}
and $p^{-1}(U)/\!\!/P$ is again irreducible. 
From equations (\ref{E:req1}) and (\ref{E:req2}) 
we see that the lemma will be proved once we show that, for any $\tau_1$ as above, we have
\begin{equation}\label{E:nreq}
\dim(p^{-1}(Z)/\!\!/P \times \mathbb{P}^{l-1}) < \mid/2.
\end{equation}

\smallskip

We will treat the case when the dimension vector $v_2$ occurs once in $\tau_1$, with multiplicity $k \geqslant 1$. 
The case when the 
dimension vector $v_2$ occurs more than once is similar and is left to the reader. 

\smallskip

Then, we have $\dim\Hom_{\tilde\Pi}(N_1,N_2) \leqslant k$ for any $(N_1,N_2) \in Z$ and
\cite[lem.~3.19]{SV17a} yields
\begin{equation}\label{E:req3}
\dim(p^{-1}(Z) /\!\!/ P)  \leqslant \dim Z-\dim L -(v_1+\delta_\infty\,,\,v_2)_{\tilde{Q}} + k.
\end{equation}
There can be a nonzero morphism from $N_1 \in \M(\tau_1)$ to $N_2 \in \M(\kappa_{v_2,0})$ 
only if the simple constituent of
$N_1$ of dimension $v_2$ is isomorphic to $N_2$. Therefore, we have
$$\dim Z \leqslant \dim\M_s(\tau_1) + \dim\M(\kappa_{v_2,0}) - \dim\frakM_0(\kappa_{v_2,0})$$
and thus
\begin{equation}\label{E:req4}
\dim Z-\dim L \leqslant \dim\frakM(\tau_1) -1.
\end{equation}
Next, by \cite[(3.19)]{SV17a} we have 
$$\dim\frakM(\tau_1) \leqslant  \dim\frakM(v_1,w) /2 +  \dim\frakM_0(\tau_1)/2.$$
Therefore, we have
\begin{align}\label{E:req5}
\codim_{\frakM(v_1,w)} \frakM(\tau_1) \geqslant 
\dim\frakM(v_1,w)/2-\dim \frakM_0(\tau_1)/2. 
\end{align}
Using \eqref{calcul}, \eqref{E:req3}, \eqref{E:req4} and \eqref{E:req5}, 
we deduce that (\ref{E:nreq}) is implied by the following inequality
\begin{equation}\label{E:nreq1}
1 + 2(q_i-1)l^2  + \dim\frakM(v_1,w)/2-\dim\frakM_0(\tau_1)/2 \geqslant k.
\end{equation}
Finally, we prove (\ref{E:nreq1}). It is obvious if $k=1$, so we may assume that $k>1$. 
Let $\tau'_1$ be the representation type
obtained from $\tau_1$ by replacing $(k,v_2)$ by $(1, v_2), (1,v_2), \ldots$ ($k$ times).
We have 
$$\dim \frakM_0(\tau'_1) = (k-1) (2 + 2(q_i-1)l^2) + \dim\frakM_0(\tau_1).$$
Since $q_i>1$, this implies that
$$1 + 2(q_i-1)l^2 + \dim \frakM_0(\tau'_1)/2-\dim\frakM_0(\tau_1)/2 >k.$$
Since $\tau_1$ and $\tau'_1$ have the same dimension type, they are either both in the image of $\pi$ or both outside. In the latter case, the set
$\frakeh(\tau; \tau_1)$ is empty and there is nothing to prove. In the former case, we have
$\dim\frakM(v_1,w) \geqslant \dim\frakM_0(\tau'_1)$
and the inequality (\ref{E:nreq1}) follows.

\end{proof}

\smallskip

\subsubsection{Proof of $\mathrm{(d)}$} We concentrate now on the proof of Proposition~\ref{prop:D2}(d).
For degree reasons and Propositions \ref{prop:D1}, \ref{prop:D2}(c), the restriction of the cycle
$\bfr[v,0\,;\,v_1,v_2]$ to $\frakeh[v_1,\SS(v_2)]$ is a multiple of the fundamental class of the top dimensional irreducible 
component.
We must prove it is not zero. We'll abbreviate
$$\frakL=\frakL(v,w+\delta_i),\quad\frakM=\frakM(v,w+\delta_i),\quad\frakM_0=\frakM_0(v,w+\delta_i),
\quad\frakZ=\frakZ(v,w+\delta_i).$$
Then, for each $k=0,1,\dots,l$ we consider the (reduced) fiber product
$$
\frakR_k=\frakL[v_1+k\delta_i,v_2-k\delta_i]\times_{\frakM_0}\frakM[v_1,v_2]
\subset \frakZ,
$$
and the closure $\overline\frakR$ in $\frakZ$ of the subset of $\frakR_0$ given by
$$\frakR=\frakL[v_1,v_2]\times_{\frakM[v_1,v_2]}\frakM[v_1,v_2].$$
Then $\overline\frakR$, $\overline\frakR_1,\dots,\overline\frakR_l$ are Lagrangian cycles in 
$\frakM^\op\times\frakM[v_1,v_2]$ such that
$\overline\frakR_k\subset\frakR_k\cup\cdots\cup\frakR_l$ for each $k$.
By \cite[prop.~3.5.1]{MO} we have
$$\stab[v_1,v_2]=C_0+C_1+\cdots +C_l$$ where
$C_0=\varepsilon_{v_1,v_2}\,[\,\overline\frakR\,]$
and $C_k$ is a Lagrangian cycle in $\frakM^\op\times\frakM[v_1,v_2]$
supported on $\overline \frakR_k$ for each $k$.
By Proposition \ref{prop:D1},
the restriction of the cycle $\stab[v_1,v_2]$ to $\frakM[v,0\,;\,v_1,v_2]$
is supported on the generalized Hecke correspondence
$$\frakeh[v_1,v_2]\subseteq \frakM[v,0]\times_{\frakM_0}\frakM[v_1,v_2].$$

\smallskip

\begin{lemma}\label{lem:AAA}
If $0<k<l$ then $\dim\big( \overline\frakR_k\cap\frakeh[v_1,\SS(v_2)]\big)<\mid/2$.
\end{lemma}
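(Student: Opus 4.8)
The plan is to narrow the intersection down to the fibre product $\frakR_l$, then to the unique top-dimensional component $\calZ$ of the generalized Hecke correspondence furnished by the preceding lemma, and finally to rule out $\calZ\subseteq\overline\frakR_k$ by a dimension count on attracting sets, in the spirit of that lemma and of \cite[prop.~3.16]{SV17a}.

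\smallskip

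First I would observe that for $k\leqslant j<l$ the intersection $\frakR_j\cap\frakeh[v_1,\SS(v_2)]$ is empty: by Proposition~\ref{prop:D1}(a) the first projection of any point of $\frakeh[v_1,\SS(v_2)]$ lies in $\frakM[v,0]$ and is hence $\diamond$-fixed, whereas the first projection of a point of $\frakR_j$ lies in the attracting set $\frakL[v_1+j\delta_i,v_2-j\delta_i]$, and a $\diamond$-fixed point of an attracting set lies in the fixed component it attracts to, namely $\frakM[v_1+j\delta_i,v_2-j\delta_i]$, which is disjoint from $\frakM[v,0]$ since $j\neq l$. Since moreover $\frakM[v,0]\subseteq\frakL[v,0]$, the inclusion $\frakeh[v_1,v_2]\subseteq\frakM[v,0]\times_{\frakM_0}\frakM[v_1,v_2]$ gives $\frakeh[v_1,\SS(v_2)]\subseteq\frakR_l$, and so $\overline\frakR_k\cap\frakeh[v_1,\SS(v_2)]=\big(\overline\frakR_k\cap\frakR_l\big)\cap\frakeh[v_1,\SS(v_2)]$, the overlap of $\overline\frakR_k$ with $\frakR_l$ intersected with the Hecke correspondence.

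\smallskip

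Next I would invoke the representation-type stratification $\frakeh[v_1,\SS(v_2)]=\bigsqcup_{\tau,\tau_1}\frakeh(\tau\,;\,\tau_1)$ from the proof of the preceding lemma: every stratum has dimension at most $\mid/2$, equality holds only for $\tau=\kappa_{v_1,w}\oplus\kappa_{v_2,0}$ and $\tau_1=\kappa_{v_1,w}$, and the latter stratum has a unique top-dimensional component $\calZ$. Hence $\calZ$ is the only component of $\frakeh[v_1,\SS(v_2)]$ of dimension $\mid/2$, and since $\overline\frakR_k$ is closed it suffices to prove $\calZ\not\subseteq\overline\frakR_k$ for $0<k<l$: in that case $\overline\frakR_k\cap\frakeh[v_1,\SS(v_2)]$ is a closed subset of $\frakeh[v_1,\SS(v_2)]$ avoiding $\calZ$, hence of dimension $<\mid/2$.

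\smallskip

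The point $\calZ\not\subseteq\overline\frakR_k$ is where I expect the real work to lie. Note that $\dim\overline\frakR_k=\mid/2+v_i$ with $v_i\geqslant l\geqslant 2$, so $\calZ$ would sit in $\overline\frakR_k$ in positive codimension, and a generic point $(z,z'_1,z'_2)$ of $\calZ$ would be a limit of points $(z^{(s)},\eta^{(s)}_1,\eta^{(s)}_2)$ of $\frakR_k$. Along such a family $z^{(s)}\in\frakL[v_1+k\delta_i,v_2-k\delta_i]$, so the image of $z^{(s)}$ under the semisimplification map $p\colon\frakM\to\frakM_0$ coincides with the image of its $\diamond$-limit and hence lies in $p\big(\frakM[v_1+k\delta_i,v_2-k\delta_i]\big)$, while at the same time it equals the image of $(\eta^{(s)}_1,\eta^{(s)}_2)\in\frakM[v_1,v_2]$, whose second factor specializes to a point $z'_2\in\bbP^{l-1}$ for which the underlying $\Pi$-module $\bar x_2$ is simple of dimension $l\delta_i$ and carries no framing. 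Comparing semisimple constituents and their framing supports ($W_1$ versus $W_2$) through the limit, and tracking which strata of $\frakM_0(v,w+\delta_i)$ can meet $p\big(\frakL[v_1+k\delta_i,v_2-k\delta_i]\big)$, one finds that the constituent $\bar x_2$ of $Q$-dimension $l\delta_i$ cannot be accommodated once $k>0$: the $W_2$-framed part has $Q$-dimension $(l-k)\delta_i<l\delta_i$, while the $W_1$-framed part is pinned down by the type $\tau_1=\kappa_{v_1,w}$ of $z'_1$. This contradiction should be made rigorous exactly as in the dimension estimate of the preceding proof, run over the representation-type stratification of $\frakM_0(v,w+\delta_i)$ with the extra requirement of lying in the image of the attracting set $\frakL[v_1+k\delta_i,v_2-k\delta_i]$; the delicate part is the bookkeeping of framing supports through the discontinuous $\diamond$-limit map and the fibre product over $\frakM_0$.
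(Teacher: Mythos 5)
Your first two reduction steps are correct and cleaner than the paper's setup: since $\frakeh[v_1,\SS(v_2)]\subset\frakM[v,0]\times_{\frakM_0}\frakM[v_1,v_2]$ and a $\diamond$-fixed point lying in $\frakL[u_1,u_2]$ must already lie in $\frakM[u_1,u_2]$, the set $\frakeh[v_1,\SS(v_2)]$ is disjoint from $\frakR_j$ for $j<l$ and is contained in $\frakR_l$; and since by the preceding lemma $\frakeh[v_1,\SS(v_2)]$ has a \emph{unique} top-dimensional component $\calZ$, the statement reduces to showing $\calZ\not\subseteq\overline\frakR_k$. This packaging is genuinely different from the paper, which never isolates $\calZ$ and never invokes disjointness of $\frakR_j$ from the fixed locus.

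However, the claim $\calZ\not\subseteq\overline\frakR_k$ is precisely where the entire content of the lemma lives, and you do not prove it. You gesture at ``comparing semisimple constituents and their framing supports through the limit'' and then explicitly defer: ``this contradiction should be made rigorous exactly as in the dimension estimate of the preceding proof; the delicate part is the bookkeeping of framing supports.'' This is not an argument; it is an acknowledgment that the argument is missing. Moreover, the heuristic you sketch (mismatch of $W_1$- versus $W_2$-framed constituents) does not obviously capture the relevant mechanism: closure in $\frakZ$ permits degenerations that change the semisimplification type, so a generic point of $\calZ$ could \emph{a priori} be approximated by points of $\frakR_k$ with different representation types, and a purely combinatorial comparison of constituents does not rule this out.

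The paper proceeds differently. It projects $\frakR_{l-k}\cap\frakeh[v_1,\SS(v_2)]$ to $\frakh[v_1,\SS(v_2)]$ and shows the image lies in a constructible set $X_k$ of pairs $(z,z_1)$ admitting simple quotients $S_k$, $S_l$ of dimensions $k\delta_i$, $l\delta_i$. Restricting to the dense open $Y$ where non-rigid simple factors of $z$ have multiplicity one, a short-exact-sequence analysis ($\Hom(z,S_k)=\Hom(z_1,S_k)=\bbC$, so the boundary map into $\Ext^1(S_l,S_k)$ vanishes) shows $z/z_2\simeq S_k\oplus S_l$ splits. One then factors through a finite map $\rho$ from the incidence variety of flags $z_2\subset z_1\subset z$ and a stack vector bundle $\kappa$ to a product involving the parabolic stack $\frakH$, and the key positive-codimension bound comes from $\dim\Ext^1(S_l,S_k)=2(q_i-1)kl>0$ (this is where $q_i>1$ enters). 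None of this appears in your sketch. Your reduction is a nice preliminary, but without reproducing (or replacing) this $\Ext^1$-based codimension estimate, the proof has a genuine gap at its core.
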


\begin{proof}
We'll prove that $\dim\big(\frakR_k\cap\frakeh[v_1,\SS(v_2)]\big)<\mid/2.$
Consider the constructible subset 
$X_k \subset \frakh[v_1,\SS(v_2)]$ of pairs $(z,z_1)$ for which there exist simple 
$\tilde\Pi$-modules $S_k,$ $S_l$ of 
respective dimension $k\delta_i,$ $l\delta_i$ such that 
$ \Hom_{\tilde\Pi} (z,S_k) \neq  0 $ and $\Hom_{\tilde\Pi} (z,S_l)  \neq 0$. 
We have 
$$(z, z_1,z_2) \in \frakR_{l-k} \cap\frakeh[v_1,\SS(v_2)]\Rightarrow (z,z_1)\in X_k.$$ 
So, we must show that
\begin{equation}\label{E:AAA1}
\text{codim}_{\frakh^{\circ}[v_1,v_2]}X_k >0.
\end{equation}
Let $Y \subset \frakh[v_1,\SS(v_2)]$ be the dense open subset of representations $(z,z_1)$ for 
which all non-rigid simple factors of $z$ occur with multiplicity one. It is enough to prove that
\begin{equation}\label{E:AAA2}
\text{codim}_{Y}X_k \cap Y >0 .
\end{equation}

\smallskip

Let $(z,z_1) \in X_k\cap Y$ and let $S_k,$ $S_l$ be the corresponding simple $\tilde{\Pi}$-modules. 
There is a short exact sequence
\begin{equation}\label{E:AAA3}
0 \to z_1 \to z \to S_l \to 0
\end{equation}
and since $\Hom(z,S_k) \neq 0$ and $\Hom(S_l,S_k) =0$ there is an exact sequence
\begin{equation}\label{E:AAA4}
0 \to z_2 \to z_1 \to S_k \to 0.
\end{equation}
From the exact sequence
\begin{equation}\label{E:AAA5}
\xymatrix{0 \ar[r] & \Hom(z,S_k) \ar[r] & \Hom(z_1,S_k) \ar[r]^-{\partial} & \Ext^1(S_l,S_k) }
\end{equation}
and the fact that $\Hom(z,S_k)=\Hom(z_1,S_k)=\mathbb{C}$ we deduce that $\Im(\partial)=0$. 
Hence, the following exact sequence splits
\begin{equation}\label{E:AAA6}
0 \to S_k \to z/z_2 \to S_l \to 0 .
\end{equation}

\smallskip

Consider the set 
$$\frakh \subset \frakM(v,w) \times \frakM(v_1,w) \times \frakM(v_1-k\delta_i,w)$$
consisting of the triples $(z,z_1,z_2)$ of stable $\tilde{\Pi}$-modules
such that $z_2 \subset z_1 \subset z$. 
The restriction of the map 
$$\rho: \frakh\to \frakh[v_1,v_2], \ 
(z , z_1 , z_2) \mapsto (z , z_1)$$ 
to $\rho^{-1}(Y)$ is finite. Indeed, any simple $\tilde{\Pi}$-module $S_k$ of dimension $k\delta_i$ is non-rigid, because
$$\dim\Ext^1_{\tilde \Pi}(S_k,S_k)=2-(k\delta_i,k\delta_i)_{\tilde Q}=2+2k^2(q_i-1)>0$$
by \cite[prop.~�3.1]{SV17a}.
Therefore, for any pair $(z, z_1) \in Y$ the $\tilde{\Pi}$-module $z_1$ only has finitely 
many simple factors of dimension $k\delta_i$ and they all occur with multiplicity one.
Thus, Lemma~\ref{lem:AAA} will be proved once we show that there exists $Z \subset \rho^{-1}(Y)$ such that 
\begin{equation}\label{E:AAA7}
\rho(Z) \supset X_k\cap Y\ \text{and}\  \text{codim}_{\rho^{-1}(Y)} Z >0.
\end{equation}
Let $P \subset G(k\delta_i+l\delta_i)$ be the standard parabolic subgroup of type $(k\delta_i, l\delta_i)$ and let 
$\frakp$ be its Lie algebra. Consider the stack
$$\frakH= \M(k\delta_i+l\delta_i) \cap \frakp^{2q_i} / P.$$ 
Let $U$ be the open substack of $\frakH\times\frakM(v_1-k\delta_i, w)$ consisting of 
triples $(x,y,z_2)$ such that all non-rigid simples occuring in $z_2 \oplus x \oplus y/x$ have multiplicity one. 
The set $\rho^{-1}(Y)$ is the open subset of $\frakh$ consisting of triples $(z ,z_1,z_2)$ for 
which all non-rigid simples occuring in $z$ have multiplicity one. 
As all simples of dimension in $\mathbb{N}\delta_i$ are non-rigid, an argument in all points parallel to 
\cite[lem.~3.19]{SV17a} shows that the map
$$\kappa~:\rho^{-1}(Y)\to U, \qquad (z ,z_1, z_2) \mapsto (z_1/z_2, z/z_2, z_2)$$
is a stack vector bundle. Moreover, the constructible substack $\frakH^{s}$ parametrizing pairs 
$(S_k \subset S_k \oplus S_l)$ with $S_k, $ $S_l$ simple, is of strictly positive codimension, because
$$\dim\;\Ext^1(S_l,S_k)=-(l\delta_i,k\delta_i)=2(q_i-1)kl >0.$$ 
Therefore, the following set satisfies the conditions in \eqref{E:AAA7}
$$Z=\kappa^{-1}\left((\frakH^{s}\times\frakM(v_1-k\delta_i, w)) \cap U\right).$$

\end{proof}

\smallskip

Now, let us come back to the proof of Proposition~\ref{prop:D2}(d).
From the lemma, we deduce that the restrictions of the cycles $C_1, C_2, \dots,C_{l-1}$ to $\frakM[v,0\,;\,v_1,v_2]$
do not contribute to $\bfr[v,0\,;\,v_1,v_2]$.
Further, by \eqref{res} we have 
$$\stab[v_1,v_2]|_{\frakM[v,0\,;\,v_1,v_2]}\in H_A^{\leqslant 2lv_i-2}\cap H_{*}^{\G/A}\big(\frakM[v,0\,;\,v_1,v_2]\big).$$
Let
$Z$ denote the restriction of the cycle $C_0+C_l$ to $\frakM[v,0\,;\,v_1,v_2]$.
We deduce that 
\begin{align}\label{Z}
Z\in H_A^{\leqslant 2lv_i-2}\cap H_{*}^{\G/A}\big(\frakM[v,0\,;\,v_1,v_2]\big)
\end{align}
and the cycle $\bfr[v,0\,;\,v_1,v_2]$ is the symbol of the class $Z$ in
$$H_A^{2lv_i-2}\otimes H_{*}^{\G/A}\big(\frakM[v,0\,;\,v_1,v_2]\big).$$
Further, the class $Z$ is supported on the generalized Hecke correspondence $\frakeh[v_1,v_2]$ and
it is enough to prove that its restriction to the open set
$\frakeh[v_1,\SS(v_2)]$ belongs to
$$H_A^{\leqslant\,2lv_i-2}\cap H_*^{\G/A}(\frakeh[v_1,\SS(v_2)])
\setminus
H_A^{<\,2lv_i-2}\cap H_*^{\G/A}(\frakeh[v_1,\SS(v_2)]).$$

\smallskip

For a future use, we fix some notation. 
Let $\fraktop[v_1,v_2]$ denote the top dimensional irreducible component of $\frakeh[v_1,\SS(v_2)]$ and
$N$ be the restriction to $\frakeh[v_1,v_2]$
of the normal bundle to the closed embedding of $\frakM[v,0\,;\,v_1,v_2]$ in $\frakM\times\frakM[v_1,v_2].$
Write $P$ for the standard parabolic subgroup of $G(v)$ of type $(v_1,v_2)$ and 
$L=G(v_1)\times G(v_2)$ for the corresponding standard Levi subgroup.
Finally, we abbreviate $E=\Hom_I(v_2,w_2)$.

\smallskip

First, we concentrate on the cycle $C_l$. 
The variety $\frakR_l$ is a closed subset of $\frakM\times\frakM[v_1,v_2]$ and an affine space bundle over
the Lagrangian subvariety
$$\frakM[v,0]\times_{\frakM_0}\frakM[v_1,v_2]\subset\frakM[v,0]^\op\times\frakM[v_1,v_2].$$
By Proposition \ref{prop:D1}, we can assume that the cycle $C_l$ is supported in $\frakR_l\cap\frakep[v_1,v_2]$.
The obvious map $\frakR_l\to\frakM[v,0\,;\,v_1,v_2]$ yields a 
$\G$-equivariant affine space bundle 
\begin{align}\label{nu}\frakR_l\cap\frakep[v_1,v_2]\to\frakeh[v_1,v_2]\end{align}
and the cycle $C_l$ is the pull-back of a Lagrangian cycle 
of $\frakM[v,0]^\op\times\frakM[v_1,v_2]$ supported in $\frakeh[v_1,v_2]$.
Therefore, the cycle 
$$Z_l=C_l|_{\frakM[v,0\,;\,v_1,v_2]}$$
is supported on $\frakeh[v_1,v_2]$ and its restriction to the open subset $\frakeh[v_1,\SS(v_2)]$
 is a rational multiple of the class
\begin{align}\label{Cl}
\eu(N/\nu_l)\cap [\fraktop[v_1,v_2]\,].
\end{align}
Here $\nu_l$ is the $\G$-equivariant vector subbundle of $N$ equal to the relative tangent bundle of \eqref{nu}.
Set $B_l=\GH[v_1,v_2]\times E^*$. Then, from \eqref{L} we deduce that $\nu_l$ is first projection
\begin{align*}
\big(B_l\times\Hom_I(w_2,v_1)\big)/\!\!/P\times L
\to
\GH[v_1,v_2]/\!\!/P\times L
\end{align*}
where 
$\G\times P$ acts on $\Hom_I(w_2,v)$ in the obvious way and $L$ acts trivially on $\Hom_I(w_2,v).$

\smallskip

Next, we consider the cycle $C_0$.
We have an inclusion
$\overline\frakR\subseteq\frakep[v_1,v_2]$ and $C_0$ is a multiple of the fundamental class of $\overline\frakR$.
Hence, the cycle 
$$Z_0=C_0|_{\frakM[v,0\,;\,v_1,v_2]}$$ is supported on $\frakeh[v_1,v_2]$.
Set
$$B_0=\{(z,z'_1,z'_2,\bar a_2)\in\GH[v_1,v_2]\times T^*E\,;\,
(a_2')^*(a_2)=0,\, a_2^*\in\bbC(a'_2)^*\}$$
with
$$z'_2=(\bar x'_2, \bar a'_2),\quad\bar a'_2=(a'_2,(a'_2)^*)\quad\bar a_2=(a_2,a_2^*).$$
We define the $\G$-equivariant vector subbundle $\nu_0\subset N$ as the obvious projection 
\begin{align*}
\big(B_0\times\Hom_I(w_2,v_1)\big)/\!\!/P\times L\to \frakeh[v_1,v_2].
\end{align*}
Consider the open subsets of $\frakep[v_1,v_2]$ and $\frakeh[v_1,v_2]$ given by
\begin{align}\label{sub}
\begin{split}
\frakep[v_1,v_2]^\circ&=\{(z,z'_1,z'_2)\in\GP[v_1,v_2]\,;\,\bar x_2\ \text{is\ simple},\ \Hom_{\tilde\Pi}(z_1,z_2)=0\}/\!\!/P\times L,\\
\frakeh[v_1,v_2]^\circ&=\{(z,z'_1,z'_2)\in\GH[v_1,v_2]\,;\,\bar x_2\ \text{is\ simple},\ \Hom_{\tilde\Pi}(z_1,z_2)=0\}/\!\!/P\times L,
\end{split}
\end{align}
where $z=(\bar x,\bar a)$, $z_2=(\bar x_2,\bar a_2)$ and
$$z_1=z|_{V_1\oplus W_1},\quad z_2=z\ \text{mod}\ V_1\oplus W_1.$$ 
Note that we have
\begin{align*}
\begin{split}
\frakeh[v_1,v_2]^\circ&=\frakep[v_1,v_2]^\circ\,\cap\,\frakM[v,0\,;\,v_1,v_2].
\end{split}
\end{align*}
We claim that we have
$\frakeh[v_1,S(v_2)]\subseteq\overline\frakR$ and that
the restriction of the fundamental class of $\overline\frakR\cap\frakep[v_1,v_2]^\circ$ to 
$\frakeh[v_1,v_2]^\circ$
is equal to
\begin{align}\label{C0}
\eu(N/\nu_0)\cap [\,\fraktop[v_1,v_2]^\circ\,],\quad
\fraktop[v_1,v_2]^\circ=\fraktop[v_1,v_2]\cap\frakeh[v_1,v_2]^\circ.
\end{align}

\smallskip

Comparing \eqref{Cl} and \eqref{C0}, we deduce that 
the restriction of the cycle $Z$ to the open subset $\frakeh[v_1,v_2]^\circ$ is of the form
$\alpha\cap [\,\fraktop[v_1,v_2]^\circ\,],$ for some class
$\alpha\in H^{2lv_i}_\G(\frakeh[v_1,v_2])$
which is a $\bbQ$-linear combination of the equivariant Euler classes $\eu(N/\nu_0)$ and $\eu(N/\nu_l)$.

\smallskip

Now, let $\frakL$ be the $\G$-equivariant 
line bundle on $\frakeh[v_1,v_2]$ whose fiber at the point represented by the tuple 
$(z,z'_1,z'_2)\in\GH[v_1,v_2]$ is the line in $E$ spanned by $(a'_2)^*$. 
The obvious projection 
$T^*E\to E^*$ yields a $\G$-equivariant vector bundle homomorphism $\nu_0\to\nu_l$ which fits in an exact sequence
\begin{align*}0\to\frakL\to\nu_0\to\nu_l\to\hbar\otimes\frakL^{-1}\to 0.\end{align*}
From \eqref{Z} we deduce that $\alpha$ is a non zero rational multiple of
the class $\hbar\,\eu(N/\nu_0+\nu_l).$
To finish the proof, it is enough to observe that the restriction 
of the class $\eu(N/\nu_0+\nu_l)$ to
$H^{2lv_i-2}_A(\frakeh[v_1,v_2])$ is non zero.

\smallskip

To prove the claim, set 
\begin{align*}
F&=\{(\bar a_2,\bar a'_2)\in T^*E\times T^*(E\setminus\{0\})\,;\,a_2a_2^*=a'_2(a'_2)^*\}.
\end{align*}
By \cite[lem.~�3.19]{SV17a}, the assignment $f:(z,z'_1,z'_2)\mapsto (\bar a_2,\bar a'_2)$
gives rise to the following commutative diagram
$$\xymatrix{
\frakep[v_1,v_2]^\circ\ar[r]^-f&F\\
\frakeh[v_1,v_2]^\circ\ar[r]^-{f'}\ar@{^{(}->}[u]& \{0\}\times E\setminus\{0\}.\ar@{^{(}->}[u]
}$$
whose horizontal maps are smooth and $P\times L$-equivariant.
We have
$$f(\frakR\cap\frakep[v_1,v_2]^\circ)=
\{(\bar a_2,\bar a'_2)\in T^*(E\setminus\{0\})\times T^*(E\setminus\{0\})\,;\,\exists\, t\in\bbC,\,a'_2=ta_2,\,a_2^*=t(a'_2)^*\}.$$
Hence the map $f$ restricts to a smooth map 
$$\overline\frakR\cap\frakep[v_1,v_2]^\circ\to\{(\bar a_2,\bar a'_2)\in T^*E\times T^*(E\setminus\{0\})\,;\,
(a_2')^*(a_2)=0,\, a_2^*\in\bbC(a'_2)^*\}.$$
Comparing the fibers of $f$ and $f'$ gives the result.

\qed

\smallskip

\subsection{Proof of Proposition \ref{prop:key}}\hfill\\

Recall that $w_1=w$, $w_2=\delta_i$ and $v=v_1+v_2$ with $v_2=l\delta_i$ and $l$ a positive integer.

\subsubsection{Proof of Proposition $\ref{prop:key}$}

First, assume that $q_i>1$. 
Consider the irreducible component of the Lagrangian quiver variety $\frakL^1(v_2,\delta_i)$ given by
\begin{align*}\scrC
=\{[\bar x,\bar a]\in\frakM(v_2,\delta_i)\,;\,x=a=0\}.
\end{align*}
Note that
$$\scrC=\{(\bar x,\bar a)\in\R_s(v_2,\delta_i)\,;\,x=a=0\}/\!\!/G(v_2),$$
hence it is smooth of dimension $d_{v_2,\delta_i}/2$.
Let $\scrC_0$ be the image of $\scrC$ by $\pi$.
The closed embedding
$i:\scrC\to\frakM(v_2,\delta_i)$ and the projection
$\pi:\scrC\to \scrC_0$ yield the map
$$\phi=\pi_*\circ i^*:H^{T}_*(\frakM(v_2,\delta_i))\to H_{*-d_{v_2,\delta_i}}^{T}(\scrC_0).$$ 
Since the torus $T$ contains a one parameter subgroup which scales all the quiver 
data by the same scalar, the $T$-fixed points locus in $\scrC_0$ is $\{0\}$.
Therefore,  the pushforward by the closed embedding $\{0\}\to\scrC_0$ 
is an isomorphism
$K\to H_*^{T}\big(\scrC_0\big)\otimes K.$
Let $\psi$ be the inverse map.
The composed map $\psi\circ\phi$
belongs to $F_{i}(v_2)^\vee\otimes K$.
The same argument as in \cite[prop.~5.2]{SV17a} implies that
$F_i(v_2)\otimes K$ is $K$ times the $T$-equivariant homology
of a smooth projective variety, hence the pushforward to a point yields a nondegenerate pairing
$$(\bullet,\bullet):F_i(v_2)\otimes F_i(v_2)\otimes K\to F_i(0)\otimes K=K.$$ 
Write
$\psi\circ\phi=(m,\bullet)$ for some element 
\begin{align}\label{m}m\in\Hom(F_i(0),F_i(v_2))\otimes K\subset A_i^f(v_2)\otimes K.
\end{align}
The restriction of the $K$-linear map $\e(m\,;\,w)$ to $F_w(v_1)\otimes K$ is an operator
$$\e(v_1,m\,;\,w)\in\Hom_{\Bbbk[w]}(F_w(v_1),F_w(v))\otimes K.$$
For each composition $\nu$ of $v_2$, the
fundamental class of the Hecke correspondence
$\frakh[v_1,\Lambda_\nu\,;\,w]$ can be viewed as a $\Bbbk[w]$-linear operator
$F_w(v_1)\to F_w(v)$ by convolution.

\smallskip

\begin{lemma}\label{lem:59}
We have the following equality in $\Hom_{\Bbbk[w]}(F_w(v_1),F_w(v))\otimes K$
\begin{align*}\e(v_1,m\,;\,w)=
\sum_{\nu\,\vDash\, v_2}a_\nu(v,w)\,[\,\frakh[v_1,\Lambda_\nu\,;\,w]\,],\quad a_\nu(v,w)\in\bbQ.
\end{align*}
\end{lemma}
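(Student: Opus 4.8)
\emph{Strategy.} I would realize $\e(v_1,m\,;\,w)$ as a convolution operator attached to a single off--diagonal matrix coefficient of the classical $R$--matrix --- a residue of the stable envelope --- and then bound its support using the geometry of the generalized Hecke correspondences of \S\ref{sec:HSE1}. First, unwind the definition of $\e(m\,;\,w)$ for the framings \eqref{w}: by \S\ref{sec:rmatrix} the component of $\e(m\,;\,w)$ carrying $F_w(v_1)$ to $F_w(v)$ is obtained by the partial trace over $F_i$ from the matrix coefficient of $\bfr_{w,\delta_i}$ between the fixed--point components $\frakM[v_1,v_2]$ and $\frakM[v,0]$ of $\frakM(v,w)^\diamond$; since $|0|<|v_2|$ this coefficient lies in $\bfr_{w,\delta_i,-}$, and by the displayed formula of \S\ref{sec:rmatrix} it is precisely the residue cycle $\bfr[v,0\,;\,v_1,v_2]$ of \S\ref{sec:res}. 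The partial trace pairs the $\frakM(v_2,\delta_i)$--leg of this cycle against the element of $F_i(v_2)$ representing $\psi\circ\phi=\psi\circ\pi_*\circ i^*$. Hence $\e(v_1,m\,;\,w)$ equals, up to the sign $\varepsilon_{v_1,v_2}$ and the factor $\hbar$, convolution with the class $\Xi\in H_*^{G(w)\times T}(\frakM(v,w)\times\frakM(v_1,w))\otimes K$ obtained from $\bfr[v,0\,;\,v_1,v_2]$ by applying, on the $\frakM(v_2,\delta_i)$--factor, the refined Gysin map $i^*$ along $\scrC\subset\frakM(v_2,\delta_i)$, then $\pi_*$ to $\scrC_0$, then the localization isomorphism $\psi$. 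By Proposition~\ref{prop:D1}(a) the cycle $\bfr[v,0\,;\,v_1,v_2]$ is supported on $\frakeh[v_1,v_2]$, so $\Xi$ is supported on the image, under the projection $p$ to $\frakM(v,w)\times\frakM(v_1,w)$, of $\frakeh[v_1,v_2]\cap\big(\frakM(v,w)\times\frakM(v_1,w)\times\scrC\big)$, and $\Xi$ is proper over $\frakM(v,w)$.

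Next I would compute that image using \eqref{GH} and \eqref{triple}. A point of $\frakeh[v_1,v_2]$ lying over $\scrC$ is represented by a triple $(z,z'_1,z'_2)\in\GH[v_1,v_2]$ with $z'_2\in\scrC$. By definition of $\scrC$ the $Q$--part of $z'_2$ vanishes, and by stability (no proper nonzero subrepresentation supported away from the framing vertex) its $Q^*$--part acts irreducibly on $V_2$; hence the $\Pi$--module $\bar x'_2$ underlying $z'_2$ is simple of dimension $v_2$, with vanishing $Q$--part. The condition $\pi(\bar x_2)\simeq\pi(\bar x'_2)$ of \eqref{triple} then forces the semisimplification of the $v_2$--dimensional $\Pi$--module $\bar x_2=\bar x\bmod V_1$ to be this single simple; having the same dimension as its semisimplification, $\bar x_2$ is itself simple with vanishing $Q$--part, so $\bar x_2\in\{0\}\times Rep(\k Q^*,v_2)=\Lambda_{(v_2)}$. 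Since $\Lambda_{(v_2)}\subseteq\Lambda^1(v_2)=\bigcup_{\nu\vDash v_2}\Lambda_\nu$, it follows that $\Xi$ is supported on $\frakh[v_1,\Lambda^1(v_2)\,;\,w]=\bigcup_{\nu\vDash v_2}\frakh[v_1,\Lambda_\nu\,;\,w]$; in fact the argument gives the sharper statement that $\Xi$ is supported on $\frakh[v_1,\Lambda_{(v_2)}\,;\,w]$, so that only the term $\nu=(v_2)$ can occur.

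Finally, by Remark~\ref{rem:3.6} each $\frakh[v_1,\Lambda_\nu\,;\,w]$ is empty or irreducible Lagrangian in $\frakM(v,w)^\op\times\frakM(v_1,w)$, all of the same dimension. The class $\Xi$ lies in the homological degree of such a Lagrangian cycle: this is the degree bookkeeping expressing that $\x_{i,l}$ has bidegree $(l\delta_i,0)$, while on the $R$--matrix side $\bfr[v,0\,;\,v_1,v_2]$ sits in the degree $\mid$ of \eqref{e} and $m$ carries the complementary degree. Therefore $\Xi$ is a $K$--linear combination $\sum_{\nu\vDash v_2}a_\nu(v,w)\,[\,\frakh[v_1,\Lambda_\nu\,;\,w]\,]$ of the fundamental classes of these Lagrangians, which is the asserted equality in $\Hom_{\Bbbk[w]}(F_w(v_1),F_w(v))\otimes K$. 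Rationality of the $a_\nu(v,w)$ then follows from a degree count together with the fact, established in the proof of Proposition~\ref{prop:D2}(d), that the top--dimensional part of $\bfr[v,0\,;\,v_1,v_2]$ is a \emph{rational} multiple of a fundamental class, and that $\scrC$ is a rational cycle.

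\emph{Main obstacle.} The delicate point is the support computation of the second paragraph: extracting from \eqref{triple} and the fact that $\scrC$ parametrizes \emph{simple} objects with vanishing $Q$--part that a point of $\frakeh[v_1,v_2]$ over $\scrC$ forces $\bar x_2$ to be semi--nilpotent; here one must keep the three representations $z$, $z'_2$, $\bar x_2$ of \eqref{triple} carefully distinct. A secondary technical step is the precise identification in the first paragraph --- matching the partial trace against $\psi\circ\phi$ with the convolution/Gysin/pushforward composition, tracking the sign $\varepsilon_{v_1,v_2}$ and the factor $\hbar$, and verifying that $\Xi$ does land in the Lagrangian homological degree, so that the decomposition and the rationality argument both apply.
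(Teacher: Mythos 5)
Your overall strategy does match the paper's: interpret $\e(v_1,m\,;\,w)$ as the pairing of the residue cycle $\bfr[v,0\,;\,v_1,v_2]$ against $\psi\circ\phi$, bound its support via Proposition~\ref{prop:D1}(a), and conclude by a degree count over a pure-dimensional Lagrangian. However, the support computation in your second paragraph contains a genuine gap.

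You claim that for $z'_2\in\scrC$ stability forces the underlying $\Pi$-module $\bar x'_2=(0,(x'_2)^*)$ to be \emph{simple}, and from this you deduce $\bar x_2\in\Lambda_{(v_2)}$. This is false when $q_i>1$ and $l>1$: $s$-stability of the framed tuple $(0,(x'_2)^*,0,(a'_2)^*)$ only excludes $x'^*_2$-invariant $I$-graded subspaces contained in $\ker (a'_2)^*$, not all proper $\Pi$-submodules. Concretely, with $q_i=2$ and $l=2$, a pair of $2\times2$ matrices may share a common eigenline $L$; choosing $(a'_2)^*$ nonzero on $L$ gives a point of $\scrC$ whose underlying $\Pi$-module is not simple and whose semisimplification has two constituents of dimension $\delta_i$. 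What one can actually deduce from the condition $\pi(\bar x_2)\simeq\pi(\bar x'_2)$ in \eqref{triple}, together with $x'_2=0$, is only that every simple constituent of $\bar x_2$ has vanishing $Q$-part; taking a composition series for $\bar x_2$ then exhibits a flag as in \S\ref{sec:sn}, so $\bar x_2\in\Lambda^1(v_2)=\bigcup_{\nu\vDash v_2}\Lambda_\nu$, not the single stratum $\Lambda_{(v_2)}$. This is exactly what the paper shows, by identifying $\frakeh[v_1,v_2\,;\,w,\scrC_0]=\frakeh[v_1,\Lambda^1(v_2)\,;\,w,\scrC_0]\simeq\frakh[v_1,\Lambda^1(v_2)\,;\,w]$ and then invoking \eqref{IrrLambda} and Remark~\ref{rem:3.6} to list the irreducible components. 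Your ``sharper statement'' would make the sum over $\nu$ degenerate to a single term; that is not what happens, and indeed the subsequent Proposition~\ref{lem:5.15}(b) and the induction over $l$ exist precisely to handle the nontrivial lower-order terms $\nu\neq(v_2)$. Since your argument for the weaker (correct) support containment passes through the false simplicity claim, it does not establish the lemma as written.
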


\begin{proof}
Under the Kunneth isomorphism, the maps $\phi$ and $\psi$ give linear maps
\begin{align*}
\phi&:H_*^{G(w)\times T}\big(\frakM[v,0\,;\,v_1,v_2]\big)\to 
H_{*-d_{v_2,\delta_i}}^{G(w)\times T}\big(\frakM(v,w)\times\frakM(v_1,w)\times \scrC_0\big),\\
\psi&:H_*^{G(w)\times T}\big(\frakM(v,w)\times\frakM(v_1,w)\times \scrC_0\big)\otimes K \to
H_*^{G(w)\times T}\big(\frakM(v,w)\times\frakM(v_1,w)\big)\otimes K.
\end{align*}
By Proposition \ref{prop:D1}, the class 
$\phi(\bfr[v,0\,;\,v_1,v_2])$  is supported on the variety $\frakeh[v_1,v_2\,;\,w,\scrC_0]$.
The projection along the third component
$\frakM(v,w)\times\frakM(v_1,w)\times \scrC_0\to\frakM(v,w)\times\frakM(v_1,w)$
gives an isomorphism
\begin{align*}\frakeh[v_1,v_2\,;\,w,\scrC_0]=\frakeh[v_1,\Lambda^1(v_2)\,;\,w,\scrC_0]\simeq\frakh[v_1,\Lambda^1(v_2)\,;\,w]
.\end{align*}
By \eqref{IrrLambda} and Remark \ref{rem:3.6}, 
the set of irreducible components of $\frakeh[v_1,v_2\,;\,w,\scrC_0]$ is
$$\big\{\frakeh[v_1,\Lambda_\nu\,;\,w,\scrC_0]\,;\,\nu\vDash v_2\big\}.$$
Further, the variety $\frakeh[v_1,v_2\,;\,w,\scrC_0]$ is pure dimensional 
of dimension 
$$(\mid-d_{v_2,\delta_i})/2=(d_{v,w}+d_{v_1,w})/2.$$
Since the cycle 
$\bfr[v,0\,;\,v_1,v_2]$ has the degree equal to $\mid$, the class 
$\phi(\bfr[v,0\,;\,v_1,v_2])$ has the degree equal to $\mid-d_{v_2,\delta_i}$.
So, there are rational numbers $a_\nu(v,w)$ such that
\begin{align}\label{phi(stab)}\phi(\bfr[v,0\,;\,v_1,v_2])=
\sum_{\nu\,\vDash\, v_2}a_\nu(v,w)\,[\,\frakeh[v_1,\Lambda_\nu\,;\,w,\scrC_0]\,],
\end{align}
hence we have
\begin{align*}
\psi\phi(\bfr[v,0\,;\,v_1,v_2])=
\sum_{\nu\,\vDash\, v_2}a_\nu(v,w)\,[\,\frakh[v_1,\Lambda_\nu\,;\,w]\,].
\end{align*}

\end{proof}

\smallskip

\noindent Now, Proposition \ref{prop:key} in the case $q_i>1$ follows by induction from 
\cite[prop.~3.22]{SV17a} and the next proposition.

\smallskip

\begin{proposition} \label{lem:5.15} We have 
\begin{itemize}[leftmargin=8mm]
\item[$\mathrm{(a)}$] $a_{(v_2)}(v,w)\neq 0$ for each $v,w$,
\item[$\mathrm{(b)}$] $a_\nu(v,w)$ does not depend on $v,$ $w$ for each $\nu$.
\end{itemize}
\end{proposition}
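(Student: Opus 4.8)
The plan is to read off both assertions from the description of the cycle $\phi(\bfr[v,0\,;\,v_1,v_2])$ furnished by Lemma~\ref{lem:59} — that is, from the expansion~\eqref{phi(stab)} — combining Propositions~\ref{prop:D1} and~\ref{prop:D2} for part~(a) with the universality of the element $m$ for part~(b).

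\emph{Part (a).} Since all the components $\frakeh[v_1,\Lambda_\nu\,;\,w,\scrC_0]$, $\nu\vDash v_2$, have the same dimension $(\mid-d_{v_2,\delta_i})/2$, the coefficient $a_{(v_2)}(v,w)$ can be detected on any dense open subset of the component indexed by the trivial composition. The key geometric observation is that for every $\nu\vDash v_2$ with at least two parts the bottom step $W_1$ of the defining flag is a proper nonzero subrepresentation of each module of $\Lambda_\nu$ (indeed $x(W_1)\subseteq W_0=0$ and $x^*(W_1)\subseteq W_1$); hence no module of $\Lambda_\nu$ with $\nu\neq(v_2)$ is simple, so $\SS(v_2)\cap\Lambda^1(v_2)$ lies inside $\Lambda_{(v_2)}=\{0\}\times Rep(\k Q^*,v_2)$, where it is moreover dense because $q_i>1$ forces a generic $q_i$-tuple of $l\times l$ matrices to have no common invariant subspace. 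Consequently the constraint $x=a=0$ defining $\scrC$ cuts the open subset $\frakeh[v_1,\SS(v_2)]$ of $\frakeh[v_1,v_2\,;\,w,\delta_i]$ down to the locus where the induced module lies in $\Lambda_{(v_2)}$, so that $\psi\circ\phi$ carries the fundamental class of $\frakeh[v_1,\SS(v_2)]$ to a multiple of the fundamental class of the (irreducible) component $\frakeh[v_1,\Lambda_{(v_2)}\,;\,w,\scrC_0]$; the dimension bookkeeping of Proposition~\ref{prop:D2} shows the intersection with $\scrC$ has the expected dimension. By Proposition~\ref{prop:D2}(c),(d) the cycle $\bfr[v,0\,;\,v_1,v_2]$ restricts on $\frakeh[v_1,\SS(v_2)]$ to a nonzero rational multiple of $[\,\fraktop[v_1,v_2]\,]$, and a routine check shows $\psi\circ\phi$ multiplies this leading term by a nonzero scalar of $K$: the intersection with $\scrC$ contributes either a positive integer or a nonvanishing equivariant Euler class, the resolution $\pi$ is generically finite of positive degree over $\frakM(v,w)\times\frakM(v_1,w)$, and $\psi$ is inverse to a pushforward isomorphism. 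Comparing with~\eqref{phi(stab)} then gives $a_{(v_2)}(v,w)\neq 0$.

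\emph{Part (b).} The element $m\in A_i^f(v_2)\otimes K$ produced in~\eqref{m} depends only on the vertex $i$ and on $v_2=l\delta_i$, so $\e(m)$ is a single well-defined element of $\prod_w A_w\otimes K$, whose restriction to $F_w(v_1)$ equals $\sum_{\nu\vDash v_2}a_\nu(v,w)\,[\,\frakh[v_1,\Lambda_\nu\,;\,w]\,]$ for every pair $(v_1,w)$ by Lemma~\ref{lem:59}. It is therefore enough to show that, for each $(v_1,w)$, the convolution operators $[\,\frakh[v_1,\Lambda_\nu\,;\,w]\,]$ attached to the \emph{nonempty} correspondences are linearly independent in $\Hom_{\Bbbk[w]}(F_w(v_1),F_w(v))\otimes K$; each corresponding coefficient $a_\nu(v,w)$ is then forced to equal the coordinate of the fixed operator $\e(m)$ along that basis vector, which does not depend on $(v_1,w)$, and the remaining, undetermined coefficients may be fixed once and for all. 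This linear independence I would obtain by evaluating the operators on the fundamental classes $|w\rangle$ or $|v_1\rangle$ and invoking the injectivity statements of Proposition~\ref{prop:A}, which reduce the question to the linear independence of the classes $[\Lambda_\nu]$ in $H_*^{G(v_2)\times T}(\Lambda^1(v_2))$ — and by~\eqref{IrrLambda} these are the fundamental classes of distinct irreducible components, hence linearly independent.

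\emph{Main obstacle.} The delicate point is this last linear independence: one must know that convolution does not collapse distinct irreducible Lagrangian correspondences onto the same operator, which is where the faithfulness recorded in Proposition~\ref{prop:A} and a careful choice of test class (for instance tracking leading terms along the resolution $\pi$) enter. Once~(a) and~(b) are in hand they feed, together with \cite[prop.~3.22]{SV17a}, into an induction on $l$: for $\nu$ with at least two parts $[\,\frakh[v_1,\Lambda_\nu\,;\,w]\,]$ is expressed through products of Hecke operators of smaller weight, which lie in $\e(A_i)$ by the inductive hypothesis, so Lemma~\ref{lem:59} combined with $a_{(v_2)}(v,w)\neq0$ yields $\frakC_{i,l}\in\e(A_i(l\delta_i))$, completing the proof of Proposition~\ref{prop:key} in the case $q_i>1$.
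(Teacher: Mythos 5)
Your argument for part~(a) is essentially in the same spirit as the paper's: both rely on Proposition~\ref{prop:D2}(c),(d) and on the observation that $\Lambda_\nu\cap\SS(v_2)=\emptyset$ for $\nu\neq(v_2)$, so that after intersecting with $\scrC$ only the component $\frakeh[v_1,\Lambda_{(v_2)}\,;\,w,\scrC^\circ_0]$ survives. However, your ``routine check'' glosses over what the paper treats carefully: the map $\pi:\scrC^\circ\to\scrC^\circ_0$ is a trivial $\bbP^{l-1}$-bundle, not generically finite as you assert, and the nonvanishing of the coefficient after pushforward depends on the excess intersection formula balancing the rank-$(l-1)$ Euler class coming from the refined pullback $f^!$ against the fibre dimension $l-1$ of $\pi$. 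This is a genuine computation, not a degree argument, and your stated reason for the final scalar being nonzero is incorrect even if the conclusion is not.

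For part~(b) there is a real gap. You argue: $\e(m)$ is one fixed element of $\prod_w A_w\otimes K$, the operators $[\,\frakh[v_1,\Lambda_\nu\,;\,w]\,]$ are linearly independent, and therefore the coefficients $a_\nu(v,w)$ ``do not depend on $(v_1,w)$.'' That inference does not hold. Linear independence only says that, \emph{for each fixed} $(v_1,w)$, the expansion of $\e(m)|_{F_w(v_1)}$ in that operator basis has uniquely determined coefficients. But the basis operators themselves change with $(v_1,w)$, and nothing in the definition of $\e(m)$ forces its coordinates with respect to the basis at $(v_1,w)$ to agree with its coordinates with respect to the basis at $(v_1',w')$. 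What is needed, and what you do not supply, is a geometric relation between the residues $\bfr[v,0\,;\,v_1,v_2\,;\,w,\delta_i]$ for different $(v_1,w)$. The paper builds exactly this: it exhibits a smooth ($\bbA^N$-torsor or affine-fibration) morphism $p$ relating the relevant deformed attracting sets, proves in Lemma~\ref{lem:spe} that the specialization $\lim_0$ commutes with smooth pullback, deduces $\stab_1|_{\frakep^\diamond_1}=p^*(\stab_0)$ and hence $\bfr_1|_{\frakeh^\diamond_1}=p^*(\bfr_0)$, and finally concludes that $p$ identifies the two Hecke-correspondence expansions coefficient by coefficient (here it also uses that $\frakh^\diamond_1$ meets every stratum $\frakh[\cdot,\Lambda_\nu\,;\,\cdot]$). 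This deformation/specialization input is the heart of part~(b) and is entirely absent from your proposal; the linear-independence observation, even once made precise via Proposition~\ref{prop:A}, cannot replace it.
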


\smallskip

Finally, assume that $q_i\leqslant 1$. By Proposition \ref{prop:A}, we may assume that $l=1$. 
We have 
$$\frakC(v_1,v_2\,;\,w)=\frakh[v_1,\Lambda_{(v_2)}\,;\,w].$$
Hence, we can define the element $m$ as in \eqref{m}.
Then, the same argument as in Lemma \ref{lem:59} and Proposition \ref{lem:5.15} yields
$$\e(v_1,m\,;\,w)=[\,\frakC(v_1,v_2\,;\,w)\,].$$ 
So the convolution with the correspondence $\frakC_{i,1}$ belongs to $\e\big(A_{\delta_i}(\delta_i)\big)$.
Proposition \ref{prop:key} is proved.

\smallskip

\subsubsection{Proof of Proposition $\ref{lem:5.15}\mathrm{(a)}$} 
Write
\begin{align*}
\scrC^\circ=\{[\bar x,\bar a]\in\scrC\,;\,\bar x\in \SS(v_2)\},\quad
\scrC_0^\circ=\pi(\scrC^\circ).
\end{align*}
Note that we have
$$\frakeh[v_1,\SS(v_2)\,;\,w,\scrC]=
\frakeh[v_1,v_2\,;\,w,\scrC^\circ].$$
Hence, we have the following fiber diagram 
$$\xymatrix{
\frakM(v,w)\times\frakM(v_1,w)\times\scrC_0&
\frakM(v,w)\times\frakM(v_1,w)\times\scrC\ar[r]^-i\ar[l]_-\pi&
\frakM[v,0\,;\,v_1,v_2]\\
&\frakeh[v_1,v_2\,;\,w,\scrC]\ar[r]^-{g}\ar@{^{(}->}[u]^-{\alpha}&
\frakeh[v_1,v_2]\ar@{^{(}->}[u]^-{\alpha}\\
\frakM(v,w)\times\frakM(v_1,w)\times\scrC_0^\circ&\frakeh[v_1,v_2\,;\,w,\scrC^\circ]\ar[r]^-{f}\ar@{^{(}->}[u]^-{\beta}\ar[l]_-\pi
&\frakeh[v_1,\SS(v_2)].\ar@{^{(}->}[u]^-{\beta}\\
}$$
Let $g^!$ and $f^!$ denote the refined pullback morphisms $(i,g)^!$ and $(i,f)^!$. 
See \cite[\S 2.3.5]{SV17a} for more details.
Since the cycle $\bfr[v,0\,;\,v_1,v_2]$ is supported on $\frakeh[v_1,v_2]$, the proper base change implies that
\begin{align*}i^*\alpha_*(\bfr[v,0\,;\,v_1,v_2])=\alpha_* g^!(\bfr[v,0\,;\,v_1,v_2]),\end{align*}
from which we deduce that
\begin{align*}\phi(\bfr[v,0\,;\,v_1,v_2])=\pi_*\alpha_*g^!(\bfr[v,0\,;\,v_1,v_2]).\end{align*}
So, the restriction of the class $\phi(\bfr[v,0\,;\,v_1,v_2])$ by the open embedding 
$$\frakM(v,w)\times\frakM(v_1,w)\times\scrC^\circ_0\subset\frakM(v,w)\times\frakM(v_1,w)\times\scrC_0$$
is the pushforward by $\pi$ of the class
\begin{align*}\beta^*g^!(\bfr[v,0\,;\,v_1,v_2])=f^{!} \beta^*(\bfr[v,0\,;\,v_1,v_2]).\end{align*}

\smallskip

First, let us compute the later.
The map $i$ is a regular embedding of codimension $d_{v_2,\delta_i}/2$.
Consider the open subset $\frakeh[v_1,v_2]^\circ\subset\frakeh[v_1,v_2]$ introduced in \eqref{sub}, and set
$$\frakeh[v_1,\Lambda_{(v_2)}\,;\,w,\scrC^\circ_0]^\circ=\frakeh[v_1,v_2]^\circ\cap\frakeh[v_1,\Lambda_{(v_2)}\,;\,w,\scrC^\circ_0].$$
Using \cite[lem.~�3.19]{SV17a} as above, we get a Cartesian square
$$\xymatrix{\frakeh[v_1,\Lambda_{(v_2)}\,;\,w,\scrC^\circ_0]^\circ\ar[r]^-f\ar[d]&\frakeh[v_1,v_2]^\circ\ar[d]\\
\frakM(v_1,w_1)\times\scrC^\circ\ar[r]&\frakM(v_1,w_1)\times\frakM(\tau_{v_2}),}$$
where the vertical maps are smooth and $\tau_{v_2}\in RT(v_2,\delta_i)$ is the representation type $(1,0,\delta_i;1,v_2)$.
We have
$$\dim\scrC^\circ=d_{v_2,\delta_i}/2=(q_i-1)l^2+l,\quad
\dim\frakM(\tau_{v_2})=2(q_i-1)l^2+l+1.$$
So, up to restricting the map $f$ to the open subsets above, we may assume that
it is a regular embedding of codimension $d_{v_2,\delta_i}/2-l+1$ and
the excess intersection formula yields
$$f^! \beta^*(\bfr[v,0\,;\,v_1,v_2])=\eu(N)\cap f^* \beta^*(\bfr[v,0\,;\,v_1,v_2])$$
for some vector bundle $N$ of rank $l-1$.
Note that $\Lambda_\nu\cap\SS(v_2)=\emptyset$ for each composition $\nu\neq(v_2)$.
Therefore, we have
\begin{align}\label{FF1}\frakeh[v_1,\Lambda_\nu\,;\,w,\scrC^\circ]=\emptyset,\quad\forall\nu\neq(v_2).\end{align}
We deduce that 
\begin{align}\label{F2}
f^!\beta^*(\bfr[v,0\,;\,v_1,v_2])=\eu(N)\cap 
[\,\frakeh[v_1,\Lambda_{(v_2)}\,;\,w,\scrC^\circ]\,].
\end{align}
Next, the argument in the proof of Proposition \ref{prop:D2} implies that there is an isomorphism 
$\scrC^\circ=\scrC^\circ_0\times\bbP^{\,l-1}$ 
which identifies the map $\pi:\scrC^\circ\to\scrC^\circ_0$ with the first projection.
We deduce that the pushforward of the class \eqref{F2}
by the map $\pi$
is a nonzero multiple of the fundamental class of
$\frakeh[v_1,\Lambda_{(v_2)}\,;\,w,\scrC^\circ_0].$

\smallskip

Now, from \eqref{FF1} we get that
$$\frakeh[v_1,\Lambda_\nu\,;\,w,\scrC^\circ_0]=\emptyset,\quad\forall\nu\neq(v_2).$$
Thus, by  \eqref{phi(stab)} the restriction of the class $\phi(\bfr[v,0\,;\,v_1,v_2])$ to 
$\frakM(v,w)\times\frakM(v_1,w)\times\scrC^\circ_0$ is 
$$a_{(v_2)}(v,w)\,[\,\frakeh[v_1,\Lambda_{(v_2)}\,;\,w,\scrC^\circ_0]\,].$$
We deduce that $a_{(v_2)}(v,w)\neq 0$.

\smallskip

\subsubsection{Proof of Proposition $\ref{lem:5.15}\mathrm{(b)}$, step 1} 
Assume that $v_1=0$ and $v_2=v$.
 Fix dimension vectors $w_1$ and $w_0$ with 
$w_1\geqslant w_0$ and set $N=v\cdot(w_1-w_0)$. 
Let us prove that 
\begin{align}\label{cas1}
a_\nu(v,w_1)=a_\nu(v,w_0),\quad\forall\nu\vDash v.
\end{align}
For each $\epsilon=0,1$ we write $\G_\epsilon=G(w_\epsilon)\times G(\delta_i)\times T$ and
\begin{align*}
\frakL_{\epsilon,{\bbA^1}}&=\frakL\,[0,v\,;\,w_\epsilon,\delta_i]_{\bbA^1},\quad\hfill
&\frakZ_{\epsilon,{\bbA^1}}&=\frakZ\,[0,v\,;\,w_\epsilon,\delta_i]_{\bbA^1},&\\
\frakh_{\epsilon,\bbA^1}&=\frakh\,[0,v\,;\,w_\epsilon]_{\bbA^1},\quad\hfill
&\frakeh_{\epsilon,\bbA^1}&=\frakeh\,[0,v\,;\,w_\epsilon,\delta_i]_{\bbA^1},&\\
\frakp_{\epsilon,\bbA^1}&=\frakp\,[0,v\,;\,w_\epsilon,\delta_i]_{\bbA^1}.&&\end{align*}
Hence, we have
\begin{align*}
\frakp_{\epsilon,\bbA^1}&=\P_{\epsilon,\bbA^1}\,/\!\!/\,G(v),\\
\P_{\epsilon,\bbA^1}&=\{(\bar x,\bar a)\in\M_s(v,w_\epsilon+\delta_i)_{\bbA^1}\,;\,a(W_\epsilon)=0\}.
\end{align*}
Fix a surjective morphism of $I$-graded vector spaces
\begin{align}\label{p}p:W_1\to W_0.\end{align}
The composition with $p$ yields an $\bbA^{N}$-torsor
$$p:\P_{1,\bbA^1}\to\M(v,w_0+\delta_i)_{\bbA^1}.$$
Consider the open subset of $\frakp_{1,\bbA^1}$ given by
\begin{align*}
\frakp^\diamond_{1,\bbA^1}=p^{-1}\big(\M_s(v,w_0+\delta_i)_{\bbA^1}\big)\,/\!\!/\,G(v).
\end{align*}
Then, the map $p$ yields an $\bbA^N$-torsor
$$p:\frakp_{1,\bbA^1}^\diamond\to\frakp_{0,\bbA^1}.$$
The map \eqref{inclusion} gives an inclusion $\frakL_{1,\bbA^1}\subset\frakp_{1,\bbA^1}^\diamond$.
Since
\begin{align*}
\frakZ_{\epsilon,\bbG_m}&=\frakL_{\epsilon,\bbG_m}\times_{\frakM(v,\delta_i)_{\bbG_m}}\frakM(v,\delta_i)_{\bbG_m},
\end{align*}
we get the following fiber diagram 
\begin{align*}
\begin{split}
\xymatrix{
\frakZ_{1,\bbG_m}\ar@{^{(}->}[r]\ar[d]^-p&\,\frakp^\diamond_{1,\bbA^1}\times\frakM(v,\delta_i)_{\bbA^1}\ar[d]^-p
&\,\frakp^\diamond_{1}\times\frakM(v,\delta_i)\ar[d]^-p\ar@{_{(}->}[l],\\
\frakZ_{0,\bbG_m}\ar@{^{(}->}[r]&\,\frakp_{0,\bbA^1}\times\frakM(v,\delta_i)_{\bbA^1}
&\,\frakp_{0}\times\frakM(v,\delta_i),\ar@{_{(}->}[l]
}
\end{split}
\end{align*}
where the vertical maps are $\bbA^{N}$-torsors and 
$\frakp_{\epsilon},$ $\frakp_{\epsilon}^\diamond$ are the fibers at 0.

\smallskip

Now , we have the following general fact.

\smallskip

\begin{lemma}\label{lem:spe}
Let $p: X_{\bbA^1}\to Y_{\bbA^1}$ be a $T$-equivariant smooth morphism of $\bbA^1$-schemes which
are $T$-equivariant 
locally trivial fibrations over $\bbG_m$.
Let $X$, $Y$ be the fibers at 0. 
For any $T$-equivariant cycle $Z$ in 
$Y_{\bbG_m}$ we have the equality of $T$ equivariant cycles in $X$
$$p^*\lim_{0} Z=\lim_{0} \,p^*(Z)$$
\end{lemma}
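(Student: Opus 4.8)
The plan is to reduce the assertion to the compatibility of flat pullback with the refined Gysin homomorphism of a Cartier divisor. Recall from \cite[\S 2.6.30]{CG} and \cite[\S 11.1]{F98} that the specialization $\lim_0 Z$ of a $T$-equivariant cycle $Z$ on $Y_{\bbG_m}$ is computed as $i_Y^![\overline Z]$, where $\overline Z$ is the closure of $Z$ in $Y_{\bbA^1}$, the map $i_Y\colon Y\hookrightarrow Y_{\bbA^1}$ is the inclusion of the fibre over $0\in\bbA^1$ --- a principal effective Cartier divisor --- and $i_Y^!$ is the associated refined Gysin map; similarly for $X$. Since $p$ is flat, $i_X\colon X\hookrightarrow X_{\bbA^1}$ is again a Cartier divisor, namely the pullback of $i_Y$ along $p$, and we obtain a Cartesian square of $T$-varieties
\[
\xymatrix{
X\ar@{^{(}->}[r]^-{i_X}\ar[d]_-{p_0}&X_{\bbA^1}\ar[d]^-{p}\\
Y\ar@{^{(}->}[r]^-{i_Y}&Y_{\bbA^1},
}
\]
with $p_0$ the smooth $T$-morphism induced by $p$ over $0$. (In the statement, $p^*$ on the source of $\lim_0$ means flat pullback along $p|_{X_{\bbG_m}}$, and on the target it means $p_0^*$.)

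First I would check that flat pullback commutes with taking closures, i.e. $p^*[\overline Z]=[\,\overline{p^*Z}\,]$ as cycles on $X_{\bbA^1}$. By linearity it suffices to treat $Z=[Z_i]$ a subvariety, and one may discard those $Z_i$ whose image in $\bbG_m$ is a single point, since their closures, and the $p$-preimages thereof, are disjoint from the fibres over $0$ and so contribute to neither side. For the remaining $Z_i$ the closure $\overline{Z_i}$ is integral and dominates $\bbA^1$, hence is automatically flat over the one-dimensional regular base $\bbA^1$; therefore $p^{-1}(\overline{Z_i})$ is flat over $\bbA^1$, has no associated point lying over $0$, and thus equals $\overline{p^{-1}(Z_i)}$. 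Since $p$ is smooth, $p^{-1}(\overline{Z_i})$ is moreover reduced, so $p^*[\overline{Z_i}]=[p^{-1}(\overline{Z_i})]=[\,\overline{p^{-1}(Z_i)}\,]$, as wanted.

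The second ingredient is the base-change identity $i_X^!\circ p^*=p_0^*\circ i_Y^!$, the standard compatibility of flat pullback with divisorial Gysin maps --- \cite[Prop.~2.3]{F98} and its Borel-Moore and $T$-equivariant analogues --- applicable since $i_Y$ is the zero scheme of the principal divisor pulled back from $0\in\bbA^1$ and $p$ is flat. Combining the two steps gives
\[
p_0^*\,\lim_0 Z=p_0^*\,i_Y^![\overline Z]=i_X^!\,p^*[\overline Z]=i_X^!\,[\,\overline{p^*Z}\,]=\lim_0\,p^*Z.
\]
For the $T$-equivariance, I would note that every cycle, closure, fibre product and Gysin map above is $T$-equivariant, so the argument runs verbatim equivariantly; alternatively one reduces to the non-equivariant case via the Edidin-Graham approximations $(X_{\bbA^1}\times U)/T$ and $(Y_{\bbA^1}\times U)/T$ of the Borel constructions (for $U$ an open subset with free $T$-action in a linear representation of $T$, with complement of high codimension), to which $p$, $i_X$, $i_Y$ descend, using that flat pullback and specialization are compatible with the approximating limit.

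The main obstacle I anticipate is the first step --- ensuring that no spurious component of $p^{-1}(\overline Z)$ is created over $0\in\bbA^1$ and that the cycle multiplicities agree --- which is precisely where the smoothness of $p$ (flatness with geometrically reduced fibres) and the automatic flatness of $\overline{Z_i}$ over the one-dimensional regular base $\bbA^1$ are used. Once that is in hand, the divisorial base-change identity is routine, modulo pinning down the correct reference in the equivariant Borel-Moore setting.
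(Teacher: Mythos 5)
Your proposal is correct and follows the same route as the paper's proof: reduce to $Z$ a prime cycle, express $\lim_0$ as the Gysin pullback $i^*$ of the Zariski closure, invoke compatibility of the Gysin morphism with smooth/flat pullback, and finally verify that flat pullback commutes with taking closures. The only difference is that where the paper simply cites EGA~IV, thm.~2.3.10 for $\overline{p^*(Z)}=p^*(\overline Z)$, you unpack that reference into a direct argument (flatness of an integral scheme dominating a regular one-dimensional base, absence of associated points of $p^{-1}(\overline Z_i)$ over $0$, reducedness from smoothness), and you additionally spell out the reduction of the $T$-equivariant statement to the non-equivariant one via Edidin--Graham approximations, which the paper leaves implicit.
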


\smallskip

\begin{proof} Set $Z=\sum_in_i[Z_i]$ where $Z_i$ is a $T$-invariant closed subvariety of $Y_{\bbG_m}$
and $n_i\in\bbQ$. Then, taking the Zarisky closure in $Y_{\bbA^1}$ and in $X_{\bbA^1}$ we get
$$\overline Z=\sum_in_i[\overline Z_i],\quad\overline {p^*(Z)}=\sum_in_i[\overline {p^{-1}(Z_i)}].$$ 
We have
$$p^*\lim_{0} Z=p^*i^*(\overline {Z}),\quad 
\lim_{0} \,p^*(Z)=j^*\overline{p^*(Z)},$$
where $i$, $j$ are the regular embeddings $Y\subset Y_{\bbA^1}$ and $X\subset X_{\bbA^1}$.
The functoriality of Gysin morphisms with respect to smooth pull back yields $p^*\circ i^*=j^*\circ p^*$.
Thus, the claim follows from the smoothness of $p$ and the equality
$\overline {p^*(Z)}=p^*(\overline Z)$, see \cite[thm.~2.3.10]{EGAIV}.

\end{proof}

\smallskip

Let us abbreviate
$$\stab_\epsilon=\stab[0,v\,;\,w_\epsilon,\delta_i],\quad
\bfr_\epsilon=\bfr\,[v,0\,;\,0,v\,;\,w_\epsilon,\delta_i].$$ 
From \eqref{stab00} we deduce that 
\begin{align}\label{cycle1}
\begin{split}
\stab_\epsilon
=\pm\lim_{0}\,[\frakZ_{\epsilon,\bbG_m}].
\end{split}
\end{align} 
It is a $\G_\epsilon$-equivariant cycle 
supported on the subset of $\frakM(v,w_\epsilon+\delta_i)\times\frakM(v,\delta_i)$
given by
$$\frakep_\epsilon:=\frakp_{\epsilon}\times_{\frakM_0(v,\delta_i)}\frakM(v,\delta_i).$$ 
Let $\stab_1|_{\frakep_1^\diamond}$ be the restriction of $\stab_1$ to the open subset
$$\frakep_1^\diamond:=\frakp^\diamond_{1}\times_{\frakM_0(v,\delta_i)}\frakM(v,\delta_i).$$ 
By Lemma \ref{lem:spe} we have
\begin{align}\label{relation}\stab_1|_{\frakep_1^\diamond}=p^*(\stab_0).
\end{align}

\smallskip

Now, let us consider the $G(w_\epsilon)\times T$-equivariant cycle
$\bfr_\epsilon$. Let 
$$i_\epsilon:\frakM(v,w_\epsilon)\times\frakM(v,\delta_i)\to\frakM(v,w_\epsilon+\delta_i)\times\frakM(v,\delta_i)$$
be the obvious embedding. Recall that $\res_\epsilon$ is the residue along $i_\epsilon$ of the cycle 
$\stab_\epsilon$. By \eqref{hp} we have the following fiber diagram
\begin{align}\label{diag5}\begin{split}
\xymatrix{
\frakM(v,w_1)\times\frakM(v,\delta_i)\,\ar@{^{(}->}[r]^-{i_1}&\,\frakM(v,w_1+\delta_i)\times\frakM(v,\delta_i)\\
\frakeh^\diamond_1\,\ar@{^{(}->}[u]\ar@{^{(}->}[r]\ar[d]^-p&\,\frakep_1^\diamond\ar@{^{(}->}[u]
\ar[d]^-p\\
\frakeh_0\,\ar@{^{(}->}[r]&\,\frakep_0.
}
\end{split}\end{align}

The cycle $\bfr_\epsilon$ 
is supported on $\frakeh_\epsilon$ and it is characterized by the following relation
$$(i_\epsilon)^*(\stab_\epsilon)=\text{eu}\cap\bfr_\epsilon\ \text{modulo}\ H_{G(\delta_i)}^{<\,\top}\otimes
H_*^{G(w_\epsilon)\times T}(\frakeh_\epsilon),$$
where $\text{eu}$ is a square root of the Euler class of the normal bundle to $i_\epsilon$.
See \S\ref{sec:res} for more details.
Since the map $p$ is smooth, using this characterization and the formula \eqref{relation}, we deduce that
the restriction $\bfr_1|_{\frakeh^\diamond_1}$ of $\bfr_1$ to the open subset $\frakeh^\diamond_1$ of $\frakeh_1$
above is given by 
\begin{align}\label{r01}\bfr_1|_{\frakeh^\diamond_1}=p^*(\bfr_0).\end{align}
Note that $\bfr_1$ is a $G(w_1)\times T$-cycle while $\bfr_0$ is $G(w_0)\times T$-equivariant.
Hence we must first apply to $\bfr_1$ the functoriality of equivariant cohomology relatively to any embedding
$G(w_0)\subset G(w_1)$ which is compatible with the flag $W_1\to W_0$ in \eqref{p}.

\smallskip

Finally, by Lemma \ref{lem:59} the class
$\e(m,v,w_\epsilon)$ is supported on the closed subset $\frakh_\epsilon$ of $\frakM(v,w_\epsilon)$
and it decomposes as the sum
\begin{align*}\e(m,v,w_\epsilon)=\sum_{\nu\,\vDash\, v}a_\nu(v,w_\epsilon)\,
[\,\frakh[0,\Lambda_\nu\,;\,w_\epsilon]\,].
\end{align*}
From \eqref{hp} we may consider the open subset
$\frakh^\diamond_1=\frakh_1\cap\frakp_1^\diamond$ of $\frakh_1$.
From \eqref{r01} we deduce that the restriction of $\e(m,v,w_1)$ to 
$\frakh^\diamond_1$ is equal to $p^*(\e(m,v,w_0))$.  
This implies the claim \eqref{cas1}, because for all composition $\nu$ of $v$ we have
$$\frakh^\diamond_1\cap\frakh_1[0,\Lambda_\nu\,;\,w_1]\neq\emptyset.$$

\smallskip

\subsubsection{Proof of Proposition $\ref{lem:5.15}\mathrm{(b)}$, step 2} 
Fix dimension vectors $v$, $w$. Assume that $v=v_1+v_2$ with $v_2 =l\delta_i$. Let us prove that 
\begin{align}\label{cas2}a_\nu(v,w)=a_\nu(v_2,w),\quad \forall\nu\vDash v_2.
\end{align}
Set $v_0=0$. For each $\epsilon=0,1$  we write 
\begin{align*}
\frakL_{\epsilon,\bbA^1}&=\frakL\,[v_\epsilon,v_2\,;\,w,\delta_i]_{\bbA^1},\quad
&\frakZ_{\epsilon,\bbA^1}&=\frakZ\,[v_\epsilon,v_2\,;\,w,\delta_i]_{\bbA^1},&\\
\frakh_\epsilon&=\frakh\,[v_\epsilon,v_2\,;\,w],\quad&\frakeh_\epsilon&=\frakh\,[v_\epsilon,v_2\,;\,w,\delta_i],&\\
\frakp_{\epsilon,\bbA^1}&=\frakp\,[v_\epsilon,v_2\,;\,w,\delta_i]_{\bbA^1},&\quad
\P_{\epsilon,\bbA^1}&=\P\,[v_\epsilon,v_2\,;\,w,\delta_i]_{\bbA^1}.
\end{align*}
Fix $V_1,$ $V_2$, $W$ and $P$ as in the previous sections. Set 
\begin{align*}
\P'_{0,\bbA^1}&=\{z\in\M(v_2,w+\delta_i)_{\bbA^1}\,;\,z(W)=0\}
=\M(v_2,\delta_i)_{\mathbb{A}^1} \times \Hom_I(V_{2},W).
\end{align*}
Then, we have 
$\P_{0,\bbA^1}=\P'_{0,\bbA^1}\cap\, \M_s(v_2,w+\delta_i)_{\mathbb{A}^1}.$
There is an obvious map 
\begin{align*}
p\,:\,\P_{1,\bbA^1}\to \P'_{0,\bbA^1}.
\end{align*}
We define
$$\frakp^\triangledown_{1,\bbA^1}=p^{-1}\big(\P_{0,\bbA^1}\,\big)\,/\!\!/\,P.$$
Since $\frakp_{0,\mathbb{A}^1}$ is the categorical quotient of $\P_{0,\bbA^1}$ by $G(v_2)$,
the map $p$ factors to a map
\begin{align*}
p\,:\,\frakp^\triangledown_{1,\bbA^1}\to\frakp_{0,\bbA^1}.
\end{align*}
This map may not be smooth. To remedy this, we will restrict it to a suitable open subset
 of $\frakp^{\triangledown}_{1,\bbA^1}$. 
For each representations $z_1=(\bar x_1,\bar a_1)$ and $z_2=(\bar x_2,\bar a_2)$ we define
\begin{align*}
U(z_1,z_2)&=\{ \varphi \in \Hom_{\k\bar{Q}}(\bar{x}_1,\bar{x}_2)\;;\; a_2^*\circ\varphi=0\},\\
U&=\{(z_1,z_2) \in \frakM(v_1,w)_{\mathbb{A}^1} \times_{\bbA^1} (\M(v_2,\delta_i)_{\bbA^1}\,/\,G(v_2))\;;\; 
U(z_1,z_2)=0\}.
\end{align*}
So $U$ is non-empty and open. 
Consider the obvious map
$$q=(q_1,q_2)\,:\,\frakp_{\epsilon,\bbA^1}\to
\frakM(v_\epsilon,w)_{\mathbb{A}^1} \times_{\bbA^1} (\M(v_2,\delta_i)_{\bbA^1}\,/\,G(v_2)),\ 
z\mapsto(z_1,z_2).$$
Then, we have a non-empty open subset of $\frakp_{1,\bbA^1}$ given by
\begin{align*}
\frakp^\diamond_{1,\bbA^1}&=\{z\in\frakp^\triangledown_{1,\bbA^1}\,;\,q(z) \in U\}.
\end{align*} 
Let $\frakp^{\diamond}_1$, $\frakp_1$ be the fibers at 0 of $\frakp^{\diamond}_{1,\bbA^1}$, $\frakp_{1,\bbA^1}$.
Note that 
$\frakp^{\diamond}_{1}$ intersects all strata $\frakh(v_1,\Lambda_{\nu}\,;\,w)$ in $\frakp_{1}$. 
Indeed, for a generic element
$z \in \frakh(v_1,\Lambda_{\nu}\,;\,w)$ we have $\Hom_{\k\bar{Q}}(\bar{x}_1,\bar{x}_2)=0$, 
where $z_1=(\bar{x}_1, \bar{a}_1)=z|_{V_1 \oplus W}$ and $(\bar{x}_2)$ is the endomorphsim
of $V/V_1$ induced by $z$.

\smallskip

\begin{lemma}\hfill
\begin{itemize}[leftmargin=8mm]
\item[$\mathrm{(a)}$]
The obvious inclusion $\frakL_{1,\bbA^1}\subset\frakp_{1,\bbA^1}$ maps into $\frakp_{1,\bbA^1}^\diamond$.
\item[$\mathrm{(b)}$] 
The map $p\,:\,\frakp_{1,\bbA^1}^\diamond\to\frakp_{0,\bbA^1}$ is smooth.
\end{itemize}
\end{lemma}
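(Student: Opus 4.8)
\emph{Strategy and part (a).} I would deduce both parts from the behaviour of $p$ on the affine pre-quotients, descending afterwards along the free $P$- and $G(v_2)$-actions. For (a), recall from \eqref{L} that $\frakL_{1,\bbA^1}=\L_{1,\bbA^1}/\!\!/P$ with $\L_{1,\bbA^1}=\{z\in\P_{1,\bbA^1}\,;\,z_2\in\M_s(v_2,\delta_i)_{\bbA^1}\}$, where $z_2$ is the representation induced by $z$ on $(V\oplus W\oplus W_2)/(V_1\oplus W)$, and that for any $z\in\P_{1,\bbA^1}$ the image $p(z)$ lies automatically in $\P'_{0,\bbA^1}$, since the framing $W$ of $z$ is mapped into $V_1$. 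It then suffices to check, for $z\in\L_{1,\bbA^1}$, the two conditions cutting out $\frakp^\diamond_{1,\bbA^1}$ inside $\frakp_{1,\bbA^1}$: that $p(z)\in\P_{0,\bbA^1}$ and that $q(z)\in U$. For the first, an $s$-destabilising subrepresentation of $p(z)$ is a nonzero $\bar x_2$-stable $V'\subseteq V_2$ on which the dual-framing map of $p(z)$, hence its $W_2$-component $a_2^*$, vanishes; such a $V'$ already destabilises $z_2$, which is excluded since $z_2$ is $s$-semistable, so $p(z)\in\M_s(v_2,w+\delta_i)_{\bbA^1}\cap\P'_{0,\bbA^1}=\P_{0,\bbA^1}$. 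For the second, any $\varphi\in U(z_1,z_2)$ is a $\k\bar Q$-morphism $\bar x_1\to\bar x_2$ with $\bar x_2$-stable image contained in $\ker(a_2^*)$, so $\Im(\varphi)$ destabilises $z_2$; again $s$-semistability forces $\varphi=0$, i.e. $U(z_1,z_2)=0$.

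\emph{Part (b).} The plan is to factor $p$ through a projection: writing $U'\subseteq\frakM(v_1,w)_{\bbA^1}\times_{\bbA^1}\frakp_{0,\bbA^1}$ for the open locus pulled back from $U$, the assignment $z\mapsto(z_1,p(z))$ gives a morphism $\frakp^\diamond_{1,\bbA^1}\to U'$ whose composite with the second projection $U'\to\frakp_{0,\bbA^1}$ is $p$; the projection is smooth, being a base change of the smooth morphism $\frakM(v_1,w)_{\bbA^1}\to\bbA^1$, so it remains to see that $\frakp^\diamond_{1,\bbA^1}\to U'$ is smooth. Here I would use a relative form of \cite[lem.~3.19]{SV17a}: over a point of $U'$ the fibre is the space of $z$ (modulo $P$) preserving $V_1\oplus W$, with restriction $z_1$ and $p$-image $p(z)$ prescribed, i.e. a space of extensions of $p(z)$ by $z_1$ inside $\frakM(v,w+\delta_i)_{\bbA^1}$; the defining condition $q\in U$ kills the corresponding $\Hom$-space, and since $\dim\Ext^1-\dim\Hom$ is the constant given by the Euler form, the $\Ext^1$-spaces are of locally constant dimension. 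An argument in all points parallel to \cite[lem.~3.19]{SV17a}, carried out over $\bbA^1$ with the deformed preprojective relations and intersected with the open $s$-stable locus, then exhibits this map as an affine-space bundle, in particular as a smooth morphism. (Alternatively one argues on pre-quotients: $p$ restricts to a smooth morphism from the open subset of $p^{-1}(\P_{0,\bbA^1})$ lying over $U$ onto $\P_{0,\bbA^1}$, it is equivariant for $P\twoheadrightarrow L\twoheadrightarrow G(v_2)$ with $G(v_1)\ltimes U_P$ acting freely on the source, and smoothness descends to the geometric quotients by fppf descent along the torsor quotients.)

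\emph{Main obstacle.} The substance is entirely in (b), and specifically in smoothness over the special fibre $0\in\bbA^1$: the extra open condition $q\in U$ is imposed precisely so that the relevant $\Hom$-space vanishes and the fibre dimension of $p$ does not jump — this already fails on $\frakp^\triangledown_{1,\bbA^1}$. One has to verify that the $\Ext$-computation underlying \cite[lem.~3.19]{SV17a} goes through uniformly in the deformation parameter, and that the parabolic $P$ (versus $G(v_2)$) interacts correctly with the GIT quotients; granting this, the lemma follows.
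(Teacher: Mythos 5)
Your argument for part (a) is the paper's: the inclusion into $\frakp^\triangledown_{1,\bbA^1}$ is routine, and for $q(z)\in U$ you observe that the image of any $\varphi\in U(z_1,z_2)$ is a $\bar x_2$-stable subspace of $\Ker(a_2^*)$, which would destabilize $z_2$ — exactly what the paper says. For part (b) you use the same factorization $\frakp^{\diamond}_{1,\bbA^1}\to U'\to\frakp_{0,\bbA^1}$ and reduce to smoothness of the first arrow, but where the paper writes down the explicit affine moment-map equation $\mu:(y,u)\mapsto \sum_h(x_{1,h}y_{h^*}+y_h x_{2,h^*}-y_{h^*}x_{2,h}-x_{1,h^*}y_h)+ua_{2,i}^*+a_{1,i}v$ and checks $\Im(\mu-\mu(0))^\perp=U(z_1,z_2)$, so that $\mu$ is surjective on $U=0$ and $(q_1,p)$ is an affine fibration, you instead appeal to a relative form of \cite[lem.~3.19]{SV17a} and constancy of $\dim\Ext^1-\dim\Hom$. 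This is the same idea — vanishing of the relevant $\Hom$ is precisely what makes the linear part of $\mu$ surjective and the fiber dimension constant — but it leaves the key verification implicit, and it is a little misleading to cite lem.~3.19 here since the paper proves this step by hand rather than by quoting that lemma; making the affine map and the identification of its cokernel with $U(z_1,z_2)^*$ explicit is what turns your sketch into a proof.
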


\smallskip

\begin{proof}
The inclusion $\frakL_{1,\bbA^1}\subset\frakp^\triangledown_{1,\bbA^1}$ is obvious. 
Next, note that for each $z \in \frakp_{1,\bbA^1}$ such that
$z_2$ is stable, we have   
$q(z) \in U$. 
Indeed, the image of any $\varphi \in U(q(z))$ is a $\bar{x}_2$-stable subspace of $V_2$ 
which is contained in $\Ker(a_2^*)$. 
This implies the part (a). 
We now turn to (b). The map 
$p$ is the composition of the chain of morphisms
$$\xymatrix{
\frakp^{\triangledown}_{1,\bbA^1} \ar[r]^-{(q_1,p)}& \frakM(v_1,w)_{\bbA^1} \times_{\bbA^1} \frakp_{0,\bbA^1}
\ar[r]^-{pr_2}&\frakp_{0,\bbA^1},}$$
where $pr_2$ is the projection on the second factor. The map $pr_2$ is a smooth map because
$\frakM(v_1,w)_{\bbA^1} \to \bbA^1$ is smooth. We claim that the restriction of $(q_1,p)$ to
$\frakp^{\diamond}_{1,\bbA^1}$ is an affine fibration over its image. 
Indeed, the fiber of $(q_1,p)$ over a pair 
$$((\bar x_1,\bar a_1), (\bar x_2,\bar a_2,v)) \in \M_s(v_1,w)_{\bbA^1} \times_{\bbA^1} \P'_{0,\bbA^1}$$
 is identified with the zero set of the affine map
$$\mu~: \bigoplus_{h \in \bar{\Omega}} \Hom(V_{2,h'}, V_{1,h''}) \oplus \Hom(\k,V_{1,i}) \to \Hom(V_{2,i},V_{1,i})$$
defined by
$$(y,u) \mapsto \sum_h (x_{1,h}y_{h^*} + y_h x_{2,h^*}-y_{h^*}x_{2,h} - x_{1,h^*}y_h) + ua_{2,i}^* + a_{1,i}v.$$
Identifying $\Hom(V_{2,i},V_{1,i})^*$ with $\Hom(V_{1,i}, V_{2,i})$ via 
the trace pairing, it is easy to see that $$\Im(\mu-\mu(0))^\perp=U(z_1,z_2). $$
Hence $\mu$ is surjective whenever 
$U(z_1,z_2)=0$. It follows that the restriction of $(q_1,p)$ to $\frakp^{\diamond}_{1,\bbA^1}$ is an 
affine fibration over the open subset
$U'= \pi^{-1}(U)$
where 
$$\pi : \frakM(v_1,w)_{\bbA^1}\times_{\bbA^1}\frakp_{0,\bbA^1} \to 
\frakM(v_1,w)_{\bbA^1}\times_{\bbA^1}(\M(v_2,\delta_i)_{\bbA^1}\,/\,G(v_2))$$ 
is the 
natural map. As a consequence the map $p : \frakp^{\diamond}_{1,\bbA^1} \to \frakp_{0,\bbA^1}$, 
being the composition of the 
affine fibration $(q_1,p):\frakp^{\diamond}_{1,\bbA^1} \to U'$, the open embedding 
$U'\to  \frakM(v_1,w)_{\bbA^1} \times_{\bbA^1} \frakp_{0,\bbA^1}$ and the projection 
$pr_2: \frakM(v_1,w)_{\bbA^1} \times_{\bbA^1} \frakp_{0,\bbA^1} \to \frakp_{0,\bbA^1}$, is a smooth map.
\end{proof}

\smallskip

We have the following fiber diagram
\begin{align*}
\begin{split}
\xymatrix{
\frakZ_{1,\bbG_m}\ar@{^{(}->}[r]\ar[d]^-p&\,\frakp^\diamond_{1,\bbA^1}\times\frakM(v_2,\delta_i)_{\bbA^1}\ar[d]^-p
&\,\frakp^\diamond_{1}\times\frakM(v_2,\delta_i)\ar[d]^-p\ar@{_{(}->}[l],\\
\frakZ_{0,\bbG_m}\ar@{^{(}->}[r]&\,\frakp_{0,\bbA^1}\times\frakM(v_2,\delta_i)_{\bbA^1}
&\,\frakp_{0}\times\frakM(v_2,\delta_i)\ar@{_{(}->}[l]
}
\end{split}
\end{align*} 
where the vertical maps are smooth and the left inclusions are given by \eqref{inclusion}.
Write 
$$\stab_\epsilon=\stab[v_\epsilon,v_2;w,\delta_i],
\quad
\bfr_\epsilon=\bfr\,[v,0\,;\,v_\epsilon,v_2\,;\,w,\delta_i].$$
Consider the cycle 
\begin{align*}\stab_\varepsilon=\pm\lim_{0}\,[\frakZ_{\epsilon,\bbG_m}]\end{align*} 
which is supported on the set
$$\frakep_\epsilon=\frakp_{\epsilon}\times_{\frakM_0(v,\delta_i)}\frakM(v,\delta_i).$$ 
From Lemma \ref{lem:spe}
we deduce that the restriction of $\stab_1$
to the open subset
$$\frakep_1^\diamond:=\frakp^\diamond_{1}\times_{\frakM_0(v_2,\delta_i)}\frakM(v_2,\delta_i)$$ 
is equal to 
$$\stab_1|_{\frakep_1^\diamond}=p^*(\stab_0).$$
Taking the residue along the $A$-fixed points locus
$$\frakM(v,w)\times\frakM(v_\epsilon,w)\times\frakM(v_2,\delta_i)\subseteq
\frakM(v,w+\delta_i)\times\frakM(v_\epsilon,w)\times\frakM(v_2,\delta_i)$$
of the cycle $\stab_\epsilon$,
we get the cycle
$\bfr_\epsilon$
which is supported on $\frakeh_\epsilon$. 
Using \eqref{hp} we define
$$\frakeh^\diamond_1=\frakeh_1\cap\frakep_1^\diamond.$$ 
Then, we deduce that the restriction of $\bfr_1$ to $\frakeh^\diamond_1$ is equal to $p^*(\bfr_0).$

\smallskip

Now, let $m\in F_{i}(v_2)\otimes K$ be as above. Then, we get
\begin{align*}\e(m,v,w)&=\sum_{\nu\,\vDash\, v_2}a_\nu(v,w)\,[\,\frakh_1[v_1,\Lambda_\nu\,;\,w]\,],\\
\quad
\e(m,v_2,w)&=\sum_{\nu\,\vDash\, v_2}a_\nu(v_2,w)\,[\,\frakh_0[v_0,\Lambda_\nu\,;\,w]\,].
\end{align*}
Therefore, the restriction of $\e(m,v_2,w)$ to 
$$\frakh^\diamond_1:=\frakh_1\cap\frakp_1^\diamond,$$ 
see \eqref{hp}, is equal to $p^*(\e(m,v_2,w))$. 
Since $$\frakh^\diamond_1\cap\frakh_1[v_1,\Lambda_\nu\,;\,w]\neq\emptyset$$ for each composition $\nu$, 
claim \eqref{cas2} is proved.
This finishes the proof of Proposition \ref{lem:5.15}.

\newpage

\bigskip
\bigskip

\newpage

\end{document}